\def\struckint{\mathop{%
\def\mathpalette##1##2{\mathchoice{##1\displaystyle##2}%
  {##1\textstyle##2}{##1\scriptstyle##2}{##1\scriptscriptstyle##2}}%
\mathpalette
{\vbox\bgroup\baselineskip0pt\lineskiplimit-1000pt\lineskip-1000pt
\halign\bgroup\hfill$}
{##$\hfill\cr{\intop}\cr\diagup\cr\egroup\egroup}%
}\limits}
\newtheorem{theorem}{Theorem}[section]
\newtheorem{lemma}[theorem]{Lemma}
\newtheorem{corollary}[theorem]{Corollary}
\newtheorem{conjecture}[theorem]{Conjecture}
\newtheorem{definition}[theorem]{Definition}
\theoremstyle{remark}
\newtheorem{observation}[theorem]{Observation}
\newtheorem{remark}[theorem]{Remark}
\newtheorem{question}[theorem]{Question}
\newcommand{\integers}{\mathbb{Z}}
\newcommand{\reals}{\mathbb{R}}
\DeclareMathOperator{\tr}{tr}
\DeclareMathOperator{\acosh}{arccosh}
\DeclareMathOperator{\mcg}{Mod}
\DeclareMathOperator{\SL}{SL}
\DeclareMathOperator{\PSL}{PSL}
\DeclareMathOperator{\asin}{arcsin}
\DeclareMathOperator{\sech}{sech}
\DeclareMathOperator{\arctanh}{arctanh}
\DeclareMathOperator{\teich}{\mathcal{T}}
\begin{document}

\title{Geodesics with one self-intersection, and other stories}

\author{Igor Rivin}
\address{Department of Mathematics, Temple University, Philadelphia}
\address{School of Mathematics, Institute for Advanced Study, Princeton}
\email{rivin@temple.edu}
\keywords{free group, surface group, geodesic, self-intersection, residual finiteness, coverings, virtual freeness, McShane's identity, asymptotics, collar estimate}
\subjclass[2000]{57M50, 57M10, 57M12, 20F65, 20F14}
\thanks{The author would like to thank Henry Wilton, Alan Reid, Dennis Sullivan, and Andrew Odlyzko for enlightening discussions. A previous version of this paper appeared as \cite{preprintversion}}
\date{\today} % delete this line to display the current date

%%% BEGIN DOCUMENT

\begin{abstract}
In this note we show that for any hyperbolic surface $S$, the number of geodesics  of length bounded above by $L$ in the mapping class group orbit of a fixed closed geodesic $\gamma$ with a single double point is asymptotic to $L^{\dim(\mbox{Teichmuller space of $S.$})}.$ Since closed geodesics with one double point fall into a finite number of $\mcg(S)$ orbits, we get the same asympotic estimate for the number of such geodesics of length bounded by $L,$ and systems of curves, where one curve has a self-intersection, or there are two curves intersecting once. We also use our (elementary) methods to do a more precise study of geodesics with a single double point on a punctured torus, including an extension of McShane's identity to such geodesics.

In the second part of the paper we study the question of when a covering of the boundary of an oriented  surface $S$ can be extended to a covering of the surface $S$ itself, we  obtain a complete answer to that question, and also to the question of when we can further require the extension to be a \emph{regular} covering of $S.$

We also analyze the question of the minimal index of a subgroup in a surface group which does not contain a given element. We show that we have a linear bound for the index of an arbitrary subgroup, a cubic bound for the index of a normal subgroup, but also poly-log bounds for each fixed level in the lower central series (using elementary arithmetic considerations) -- the results hold for free groups and fundamental groups of closed surfaces.
\end{abstract}
\maketitle
\tableofcontents

%\part{zulu}
\section*{Introduction}

\subsection*{Notation} All surfaces considered in this paper are hyperbolic, orientable, of finite type (but not necessarily closed). We denote the \emph{mapping class group} of a surface $S$ by $\mcg(S).$ We denote the free group of rank $n$ by $F_n.$ We say that a function $f(x)$ is asymptotic to $g(x)$ at $a$ (usually $a=\infty$) if $\lim_{x\rightarrow a} f(x)/g(x) = 1. $ We say that $f(x)$ and $g(x)$ (usually defined on the positive reals or natural numbers) are of the same order of growth if there exist positive constants $a, b, c, d$ if $-a + b f(x) \leq g(x) \leq c + d f(x),$ for all $x$ in the domain of $f$ and $g.$ We shall denote the \emph{Teichm\"uller space} of a surface $S$ by $\teich(S).$ For a closed geodesic $\gamma,$ we denote the trace of the isometry corresponding to $\gamma$ by $t_\gamma = 2\cosh \ell \gamma/2.$

In this paper we consider a number of at first apparently unrelated questions. 

First, we study the set of closed geodesics with a single self-intersection (Section \ref{geodonegen}). We show that any such geodesic is contained in an embedded pair of pants (Theorem \ref{pantsthm}) and use that observation together with estimates on the length of the geodesic in terms of the lengths of the boundary components of the pair of pants (Theorem \ref{pantlength}) and results of M.~Mirzakhani (see \cite{mirzakhcurves,rivinmirzakh}) to show that the number of geodesics with a single self-intersection and length bounded by $L$ on a hyperbolic surface $S$ is \emph{asymptotic} to  $L$ raised to the dimension of the Teichmuller space of $S.$ We extend this result to the growth rate of \emph{one-multicurves} -- systems of pairwise-disjoint curves exactly one of which has a single self-intersection. As a side observation we find that on any hyperbolic surface any curve with one self-intersection has to have length at least $2\acosh 3$ (this is attained for a thrice punctured sphere) -- this result should be compared with related results of J. Hempel obtained in \cite{hemptrace}. Using a different technique (based on \cite{mcrivin1,mcrivin2}) we show that the number of elements shorter than $L$ in the $\mcg(T)$ orbit of a bouquet of two geodesic circles (where $T$ is a punctured torus) is asymptotic to $cL^2$ -- see Theorem \ref{asympeight}.

The celebrated \emph{McShane's identity}, proved by G.~McShane in his doctoral dissertation (see \cite{mcthesis,mcshane99}) states that 
\[
\boxed{
\sum\dfrac{1}{\exp(\ell(\gamma)) -1} = \frac12,
}
\]
where the sum is taken over all simple closed geodesics $\gamma$ with respect to an \emph{arbitrary} complete hyperbolic metric of finite area on a once-punctured torus. There has been a number of different proofs and extensions of McShane's result, due both to Greg McShane himself and others (see \cite{mcbowditch} for a nice short combinatorial proof, the ideas of which go back to Don Zagier's paper \cite{zagier}). McShane's identity was the crucial ingredient in Mirzakhani's ground-breaking work on computing the volume of moduli spaces of Riemann surfaces of finite type (with the Weil-Petersson metric) in \cite{mirzakhwp}. A number of extensions of McShane's identity currently exist, due to the work of a number of authors, including Greg McShane, Brian Bowditch, Maryam Mirzakhani, and Ser P. Tan and his collaborators. We extend all of these results to curves with a single self-intersection in Section \ref{mcshanesids}. As an example,
in the case of the punctured torus, we have the following extension:
\[
\boxed{
\sum\left(1-\sqrt{1-\left(\dfrac{6}{t_\gamma}\right)^2}\right) =2,
}
\]
where the sum is taken over all the geodesics with a single self-intersection (and does not depend on the hyperbolic structure).

One approach to understanding self-intersecting curves is to lift to a cover where the curve is simple (that this is always possible is the subject of G. Peter Scott's classic paper \cite{scottlerf}. We show that for geodesics with a single self-intersection a four-fold covering is always sufficient (see Section \ref{cover}), but the method of proof raises the question on when a given covering of a collection of curves on a surface can be extended to a covering of the whole surface. We study these questions in Sections \ref{coversec} and \ref{regcoversec}.

Finally, we are led to the following related question: It is well-known that free groups and fundamental groups of closed surfaces are residually finite. It is reasonable to ask, given an element $g$ of one of these groups $G,$ what the index of a subgroup (normal or otherwise) of $G$ not containing $g$ is. We are able to obtain a number of upper bounds, as follows:

For free and surface groups, given an element $g$ of length $n,$ there is a subgroup of index $O(n)$  not containing $g.$ This is Theorem \ref{bourabee}, which is proved by considering coverings.

For free groups, there is a normal subgroup of index exponential in $n$ which contains \emph{no} element of length $n.$ For surface groups, we are only able to bound the index by exponential of $O(n^2).$ This is the content of Theorems \ref{expthm} and \ref{expthmsurf}. Theorem \ref{expthm} is the result for free groups, and uses the results of Lubotzky-Phillips-Sarnak on expansion of Cayley graphs of special linear groups over finite fields. Theorem \ref{expthmsurf} concerns surface groups, and uses the result of Baumslag that surface groups are residually free (this can be made quantitative, as was pointed out to the author by Henry Wilton).

For free and surface groups there is a normal subgroup of index bounded by $O(n^3)$ which does not contain $g.$ This uses arithmetic representations of both free and surface groups and a little number theory. (Theorems \ref{nongamb}, \ref{nongambsurf}) 

If $g$ lies at the $k$-th level of the lower central series, for both free and surface groups we get a bound $O(\log^{k(k+1)/2} n)$ for the index of the normal subgroup not containing $g.$ This is the content of Theorems \ref{lowerng} and \ref{lowerngsurf}. The former is proved by constructing unipotent representations, the second follows from the former via the residual freeness approach. The idea of considering the lower central series comes from the (highly recommended) paper of J. Malestein and A. Putman \cite{maleput}

The above result may be put in context by Theorem \ref{avbourabee}, which states that (for free groups) the \emph{average} index of a subgroup not containing a given element is smaller than $3$ -- the proof gives the same result for normal subgroups, but the rank has to be at least four in the normal case. The argument uses the results of the author on distribution of homology classes in free groups, but the argument can be easily tweaked to work for surface groups.

\section{Geodesics with a single intersection on general hyperbolic surfaces}
\label{geodonegen}
Consider a hyperbolic surface $S$ and a geodesic $\gamma$ with exactly one double point $p.$ Let $\gamma_1$ and $\gamma_2$ be the two simple loops into which $\gamma$ is decomposed by $p,$ so that in $\pi_1(S, p)$ we can write $\gamma = \gamma_1 \gamma_2^{-1}.$ It is easy to see that 
$=\gamma_3 =(\gamma_1 \gamma_2)^{-1}$ is freely homotopic to a simple curve, and indeed:
\begin{theorem}
\label{pantsthm}
The geodesic $\gamma$ is contained in an embedded pair of pants whose boundary components are freely homotopic to $\gamma_1, \gamma_2, \gamma_3.$
\end{theorem}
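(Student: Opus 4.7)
The strategy is to realize $\gamma$ inside a small tubular neighborhood of the embedded figure eight $K := \gamma_1 \cup \gamma_2$ in $S$, and to check that this neighborhood is an embedded pair of pants with the claimed boundary components.

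First, since $p$ is the only double point of $\gamma$, the loops $\gamma_1$ and $\gamma_2$ are simple closed curves meeting only at $p$, so $K$ is a topologically embedded wedge of two circles in $S$. The self-intersection at $p$ is transverse: otherwise the two branches of $\gamma$ through $p$ would share a tangent line, and uniqueness of hyperbolic geodesics with prescribed initial data would force the two branches to agree globally, contradicting that $p$ is an isolated double point. Let $N$ be a small closed tubular neighborhood of $K$; then $N$ is an embedded compact oriented subsurface of $S$ that deformation-retracts onto $K$, so $\chi(N)=-1$ and $\partial N\ne\emptyset$. Hence $N$ is homeomorphic either to a pair of pants or to a once-punctured torus, and I must rule out the latter.

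To do so, I would pin down the ribbon-graph structure at the vertex $p$. Writing the outgoing and incoming half-edges of $\gamma_i$ at $p$ as $i_{\text{out}}, i_{\text{in}}$ for $i=1,2$, transversality makes the four tangent directions pairwise distinct, and a direct angle computation (place the outgoing tangent of $\gamma_1$ at angle $0$ and that of $\gamma_2$ at angle $\theta\in(0,\pi)$) yields the counterclockwise cyclic order $1_{\text{out}},2_{\text{out}},2_{\text{in}},1_{\text{in}}$ -- the two half-edges of each $\gamma_i$ end up adjacent, not interleaved. Running the standard face-trace on this ribbon graph (traverse an edge, then take the next half-edge in cyclic order, repeat) yields exactly three boundary cycles, so $N$ is a pair of pants. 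Reading off the cycles identifies two of them as parallel copies of $\gamma_1$ and $\gamma_2$ respectively, and the third as a cycle that runs along $\gamma_2$ and then along $\gamma_1^{-1}$, with free homotopy class $\gamma_2\gamma_1^{-1}$; under the convention $\gamma=\gamma_1\gamma_2^{-1}$ of the statement, this equals $\gamma_3=(\gamma_1\gamma_2)^{-1}$ as a free homotopy class.

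Since $\gamma\subset K\subset N$, this exhibits $\gamma$ inside an embedded pair of pants with the asserted boundary components. The only genuinely delicate step is the ribbon-graph bookkeeping in the face-trace -- in particular, ensuring the cyclic pattern is $1,2,2,1$ (which gives a pair of pants) rather than $1,2,1,2$ (which would give a once-punctured torus), and matching the orientation of the third boundary cycle with the theorem's sign convention for $\gamma_2$ so the answer comes out as $\gamma_3$ and not as the self-intersecting word $\gamma$ itself. Both are finite combinatorial checks.
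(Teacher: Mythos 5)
Your proof is correct, and it takes a genuinely different route from the paper. The paper's argument is top-down: it asserts that $\gamma_1,\gamma_2,\gamma_3$ admit pairwise disjoint simple representatives, invokes the theorem of Freedman--Hass--Scott to replace these by disjoint simple \emph{geodesics}, observes that these bound a geodesically embedded pair of pants $\Pi$, and then uses convexity of $\Pi$ to conclude $\gamma\subset\Pi$. Your argument is bottom-up: you build the pair of pants with bare hands as a regular neighborhood $N$ of the embedded figure-eight $K=\gamma_1\cup\gamma_2$, compute $\chi(N)=-1$, and then use the ribbon-graph structure at $p$ to decide between a pair of pants and a once-punctured torus. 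The essential geometric input you use is the same ``smooth passage'' fact (uniqueness of geodesics through a point with a given tangent), but you use it combinatorially at the vertex rather than globally through Freedman--Hass--Scott. What each approach buys: yours is elementary, self-contained, and in fact \emph{proves} the claim that $\gamma_3$ is simple and that the three classes admit disjoint simple representatives, which the paper only asserts in passing; the paper's version is shorter and lands directly on a pair of pants with \emph{geodesic} boundary, which it needs later (e.g., the geodesic-bigon argument in Lemma~\ref{pantsinter}). Your $N$ has topological, not geodesic, boundary --- sufficient for the statement as written, though one would still want to straighten the boundary to geodesics for the later applications.

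Two small points on the delicate step you flag. First, the reason the vertex cyclic order cannot be the interleaved pattern $1,2,1,2$ deserves one explicit sentence: at each of the two smooth passages of $\gamma$ through $p$, the incoming and outgoing half-edges are \emph{antipodal} directions. With the decomposition $\gamma=\gamma_1\gamma_2^{-1}$ the antipodal pairs are $\{1_{\text{out}},2_{\text{out}}\}$ and $\{1_{\text{in}},2_{\text{in}}\}$ (with a different but equivalent pairing under the labeling $\gamma=\gamma_1\gamma_2$ that your angle normalization implicitly uses). Four points forming two antipodal pairs on a circle must alternate between the pairs, and this alternation is exactly what excludes the punctured-torus pattern. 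Second, your normalization ``$\gamma_2$-out at angle $\theta\in(0,\pi)$'' silently fixes the orientation of $\gamma_2$ to be the opposite of the paper's ($\gamma_2\mapsto\gamma_2^{-1}$); once that is tracked, your third boundary cycle $\gamma_2\gamma_1^{-1}$ indeed matches $\gamma_3=(\gamma_1\gamma_2)^{-1}$ in the paper's labeling, as you claim. Both of these are exactly the kind of finite bookkeeping you acknowledge, and neither affects the correctness of the argument.
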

\begin{proof}
Since $\gamma_1,\gamma_2, \gamma_3$ admit disjoint simple representatives, it is standard (see, for example, Freedman, Hass, Scott\cite{freedmanhassscott}) that the geodesic representative are disjoint, and obviously bound a pair of pants $\Pi. $Since a pair of pants is a convex surface, it follows that t $\gamma=\gamma_1 \gamma_2^{-1}$ is contained in it.
\end{proof}

Theorem \ref{pantsthm} establishes a bijective correspondence between closed geodesics with a single double point and embedded pairs of pants. 

We will eventually also need the following observation:
\begin{lemma}
\label{pantsinter}
Let $\gamma, \gamma_1, \gamma_2, \gamma_3$ be as in the statement of Theorem \ref{pantsthm}, and let $\delta$ be a simple geodesic disjoint from $\gamma.$ Then, $\delta$ is also disjoint from $\gamma_1, \gamma_2, \gamma_3,$ unless it \emph{is} one of those three curves.
\end{lemma}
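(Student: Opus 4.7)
My plan is to reduce the lemma to the standard fact that on a hyperbolic surface the geodesic representatives of simple classes realize the geometric intersection number. The key trick is to replace the non-simple curve $\gamma$ by a genuinely simple multicurve, namely the boundary of a small regular neighborhood of $\gamma$, which will turn out to be an embedded representative of $\gamma_1 \cup \gamma_2 \cup \gamma_3$.

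First I would note that $\delta$ and $\gamma$ are disjoint closed sets with $\gamma$ compact (every closed geodesic lies in a compact part of $S$ even when $S$ has cusps), so there is a positive distance between them. Hence I can pick a regular neighborhood $N = N(\gamma)$ of $\gamma$ in $S$ small enough that $N \cap \delta = \emptyset$. Topologically $\gamma$ is a figure-eight (two embedded loops joined at the double point $p$), so $\chi(N) = \chi(\gamma) = -1$; since $N$ is orientable with boundary and is a regular neighborhood of a connected graph, $N$ is a pair of pants. The two ``inner'' boundary circles of $N$ are parallel to the two lobes of the figure-eight and are therefore freely homotopic to $\gamma_1$ and $\gamma_2$, while the ``outer'' boundary circle is freely homotopic to $\gamma_1 \gamma_2$, and hence, as an unoriented curve, to $\gamma_3 = (\gamma_1 \gamma_2)^{-1}$.

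Second, each boundary component $\partial_j N$ is a simple closed curve freely homotopic to $\gamma_j$ and disjoint from $\delta$, so the geometric intersection number satisfies $i(\gamma_j, \delta) = i(\partial_j N, \delta) = 0$ for $j = 1, 2, 3$. Because the geodesic representatives of simple classes realize this intersection number (the same fact from Freedman--Hass--Scott invoked in the proof of Theorem \ref{pantsthm}), the geodesic representative $\gamma_j$ is either disjoint from the geodesic $\delta$ or coincides with it; coincidence happens precisely when $\delta$ is freely homotopic to $\gamma_j$, which is the ``unless it \emph{is} one of those three curves'' clause in the statement. Doing this for each $j \in \{1,2,3\}$ finishes the proof.

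I do not expect any substantive obstacle. The two things to be slightly careful about are the topological identification of a regular neighborhood of an embedded figure-eight as a pair of pants with the expected boundary free-homotopy classes (which is an elementary Euler-characteristic count combined with the fact that $\gamma_3 \simeq (\gamma_1 \gamma_2)^{-1}$ is built into the setup), and the handling of cusps in the non-compact case, which is routine since both $\gamma$ and $\delta$ avoid the horocyclic cusp neighborhoods.
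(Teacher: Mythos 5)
Your proof is correct but follows a genuinely different route from the paper's. The paper works directly inside the embedded pair of pants $\Pi$ produced by Theorem~\ref{pantsthm}: assuming $\delta$ meets some $\gamma_i$, it picks a connected component $\delta_1$ of $\delta\cap\Pi$, observes that $\Pi\setminus\gamma$ has three components each touching exactly one $\gamma_i$, and concludes that either $\delta_1$ must cross $\gamma$ (if it hits two different boundary curves) or $\delta_1$ together with an arc of a single $\gamma_i$ bounds a geodesic bigon, which cannot happen. You instead thicken the figure-eight $\gamma$ to a small regular neighborhood $N$ disjoint from $\delta$, identify $N$ as a pair of pants whose boundary components lie in the free homotopy classes of $\gamma_1,\gamma_2,\gamma_3$, and then invoke the standard fact that geodesic representatives of simple closed curves realize geometric intersection number (essentially the same Freedman--Hass--Scott input the paper uses for Theorem~\ref{pantsthm}). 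Both arguments are sound; yours is softer and avoids the explicit bigon argument, at the modest cost of re-deriving, in a different guise, the pair-of-pants neighborhood that Theorem~\ref{pantsthm} already provides. One point worth tightening in your write-up: the claim that $N$ is a pair of pants does not follow from $\chi(N)=-1$ and orientability alone, since a one-holed torus has the same invariants; you also need that the four germs of $\gamma$ at the double point occur in the non-alternating cyclic order $\gamma_1,\gamma_1,\gamma_2,\gamma_2$, which is automatic for a single curve with one transverse self-crossing (as opposed to two distinct simple curves meeting once, which would give a one-holed torus) but should be stated.
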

\begin{proof}
Suppose the conclusion of the lemma does not hold. Then, the curve $\delta$ intersects $\Pi.$ Let $\delta_1$ be a connected component of $\delta \cap \Pi.$ If $\delta_1$ intersects some $\gamma_i$ and $\gamma_j$ for $i \neq j,$ then $\delta_1$ intersects $\gamma$ (since $\Pi \backslash \gamma$ has three connected components, each of them connecting exactly one of the $\gamma_i.$ If $\delta_1$ intersects only one of $\gamma_i,$ it is contained in one of the three connected components of $\Pi\backslash \gamma$ mentioned in the previous sentence, and thus, together with one of the segments of $\gamma_i$ that it intersects it bounds a geodesic bigon in $\mathbb{H}^2.$ Since there are no such bigons, the proof is complete.
\end{proof}

The next observation is:

\begin{theorem}
\label{pantlength}
Let  $P$ be a three-holed sphere with boundary components $a, b, c$. Let $\gamma$ be a closed geodesic in $P$ freely homotopic to $a^{-1}b.$ Then, the length of $\gamma$ satisfies:
\begin{equation}
\label{cosheq}
 \cosh \ell(\gamma)/2 = 2 \cosh \ell(a)/2 \cosh \ell(b)/2 + \cosh\ell(c)/2.
 \end{equation}
 \end{theorem}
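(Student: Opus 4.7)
The plan is to translate the identity into a relation between traces in the holonomy representation $\rho : \pi_1(P) \to \PSL(2,\reals)$ and then apply the classical Fricke trace identity for $2\times 2$ matrices. Since the standard dictionary gives $\tr(\rho(\alpha)) = \pm 2\cosh \ell(\alpha)/2$ for any closed geodesic $\alpha$, the statement to be proved is essentially a statement about traces with a particular sign choice.

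First, I lift $\rho$ to $\SL(2,\reals)$. Since $\pi_1(P)$ is free on $a,b$ (with $c=(ab)^{-1}$), such a lift is determined by a sign choice for each of the matrices $A,B$ representing $\rho(a),\rho(b)$. I pick lifts so that $\tr(A)=2\cosh\ell(a)/2$ and $\tr(B)=2\cosh\ell(b)/2$ are both positive. Under this normalization, one verifies directly that $\tr(A^{-1}B)=2\cosh\ell(\gamma)/2$, because $\gamma$ is freely homotopic to $a^{-1}b$ and the resulting trace will turn out to be positive.

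Second, I invoke the Fricke identity
\[
\tr(A^{-1}B) \;=\; \tr(A)\,\tr(B)-\tr(AB),
\]
which is an immediate consequence of the Cayley--Hamilton relation $X+X^{-1}=\tr(X)\cdot I$ in $\SL(2,\reals)$. Setting $C:=(AB)^{-1}$, so that $ABC=I$ in the lift, we have $\tr(AB)=\tr(C^{-1})=\tr(C)=\pm 2\cosh\ell(c)/2$. The last step is to pin down this sign: the standard fact (going back to Fricke, and readily checked from any explicit Fuchsian normal form for a hyperbolic pair of pants) is that when $A$ and $B$ are chosen with positive traces, $\tr(AB)$ is \emph{negative} for the holonomy of an orientable hyperbolic pair of pants. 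Equivalently, no lift of $\rho$ to $\SL(2,\reals)$ makes all three boundary traces positive simultaneously. Once this is in hand, substituting $\tr(AB)=-2\cosh\ell(c)/2$ into the Fricke identity and dividing by $2$ yields exactly
\[
\cosh \ell(\gamma)/2 \;=\; 2\cosh \ell(a)/2 \cosh \ell(b)/2 + \cosh\ell(c)/2.
\]

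The main obstacle is the sign determination in the last step: the algebraic identity itself is routine, but choosing the correct sign requires genuine geometric input. A painless way to fix it is a continuity argument — the sign of $\tr(AB)$ is locally constant on the Teichm\"uller space of $P$ (since $\tr(AB)$ never vanishes for a discrete faithful representation), so a single test case (for instance, an explicit symmetric pair of pants, or a degeneration to a thrice-punctured sphere) determines it once and for all.
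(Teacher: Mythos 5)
Your proposal is correct and follows essentially the same route as the paper: both derive the Fricke identity $\tr(A^{-1}B)=\tr(A)\tr(B)-\tr(AB)$ from Cayley--Hamilton and then pin down the signs of the boundary traces by examining a single explicit case (the paper takes all three of $\tr A,\tr B,\tr C$ negative and checks the thrice-punctured sphere; you take $\tr A,\tr B>0$ and $\tr(AB)<0$, which is the same condition stated in a different lift). The only cosmetic quibble is that the positivity of $\tr(A^{-1}B)$ is a \emph{consequence} of the computation, not something to assert in advance; otherwise the argument matches the paper's.
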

 
 \begin{proof}
 By the Cayley-Hamilton Theorem,

\[
A^{-1} = \tr(A) I - A.
\]
So, 

\[
A^{-1} B = \tr(A) B - AB,
\]
and so
\begin{equation}
\label{trace1}
\tr(A^{-1} B) = \tr(A) \tr(B) - \tr(AB) = \tr(A) \tr(B) - \tr(C).
\end{equation}
The result follows from this, the relationship between trace and the translation length and the not-quite-obvious fact that we can take $A, B, C$ with traces negative (this follows from deep results of W. Goldman \cite{goldmanreps}, but in this case we need simply to have one case easy to compute -- the thrice-punctured sphere does nicely).
\end{proof}
\begin{corollary}
A closed geodesic with a single double point  on a hyperbolic surface $S$ is no shorter than $2\acosh 3 = 2\log(3 + 2 \sqrt{2}).\approx 3.52549$
Such a geodesic is exactly of length $2\acosh 3$ if and only if $S$ is a three-cusped sphere.
\end{corollary}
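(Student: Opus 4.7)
The plan is to read off the corollary directly from Theorem \ref{pantlength} together with the pants-embedding result of Theorem \ref{pantsthm}. By the latter, $\gamma$ lies in an embedded pair of pants $\Pi\subset S$ with boundary components (in $S$) of lengths $\ell(a),\ell(b),\ell(c)$, where each boundary is either a simple closed geodesic (positive length) or a cusp of $S$ (length $0$). Theorem \ref{pantlength} then gives
\[
\cosh(\ell(\gamma)/2) \;=\; 2\cosh(\ell(a)/2)\cosh(\ell(b)/2) + \cosh(\ell(c)/2).
\]

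Next I would use the trivial but crucial estimate $\cosh(x/2)\ge 1$ for $x\ge 0$, with equality exactly when $x=0$. Applying this to each of the three boundary lengths gives
\[
\cosh(\ell(\gamma)/2) \;\ge\; 2\cdot 1\cdot 1 + 1 \;=\; 3,
\]
hence $\ell(\gamma)\ge 2\acosh 3$. Taking logs, $\acosh 3 = \log(3+2\sqrt{2})$, which matches the numerical value in the statement.

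For the equality case, the inequality is an equality iff $\ell(a)=\ell(b)=\ell(c)=0$, i.e.\ each boundary component of $\Pi$ is a cusp of $S$. The final step is then a small topological argument: an embedded pair of pants whose three boundary components are all cusps of $S$ must exhaust $S$, because cusps are ends of $S$ and nothing in $S\setminus\Pi$ can be glued along them; so $\Pi = S$, and $S$ is a thrice-cusped sphere. Conversely, on the thrice-cusped sphere there is a unique geodesic with one double point (up to the mapping class group), and plugging $\ell(a)=\ell(b)=\ell(c)=0$ into \eqref{cosheq} gives $\cosh(\ell(\gamma)/2)=3$, so the bound is attained.

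The only real point requiring care is the equality discussion: one must verify that length $0$ is admissible (i.e.\ that Theorem \ref{pantlength} remains valid when some boundary components are cusps, which is standard since the trace identity persists in the parabolic limit) and that $\Pi=S$ in that case. Once these are noted, the corollary follows immediately from the two preceding theorems.
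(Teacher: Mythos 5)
Your proof is correct and follows essentially the same route as the paper, which simply invokes monotonicity of $\cosh$ on $[0,\infty)$ applied to Eq.~\eqref{cosheq}. You have spelled out the details the paper leaves implicit, in particular the equality case: the paper does not explicitly argue that $\ell(a)=\ell(b)=\ell(c)=0$ forces $\Pi=S$, whereas you supply the (correct) topological observation that a pair of pants all of whose boundary components are cusps must be the entire surface. The only caveats you raise yourself — that the trace identity persists in the parabolic limit, and that the $\gamma_i$ may have length $0$ — are indeed standard and handled correctly.
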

\begin{proof}
Since $\cosh(x)$ is monotonic for $x\geq 0,$ the result follows immediately from Eq. \eqref{cosheq}.
\end{proof}
We are only interested in the case where $\ell(a), \ell(b), \ell(c)$ are large, in which case we can write
\[
\ell(\gamma)/2 \approx \log(\exp((\ell(a)+\ell(b)/2) + \exp(\ell(c)/2).
\]
Now, the boundary of the pair of pants can be viewed as a multicurve on the surface $S$ of length $l(a, b, c) = \ell(a)+ \ell(b)+ \ell(c).$
Further, the three-component multicurves which bound pairs of pants fall into a finite number of mapping class group orbits. 

Letting $l(a, b) = \ell(a) + \ell(b),$ we see that 
\[
\ell(\gamma)/2 \approx \log(\exp(l(a, b)/2) + \exp(\ell(c)/2)).
\]
Let $l_1 = \max(l(a, b)/2), \ell(c)/2).$ and let $l(a, b, c)/2 = (k+1)l_1,$ where $k \leq 1.$
Then, 
\[
l(a, b)/2 < \log(\exp(l(a, b)/2) + \exp(ell(c)/2)) < l(a, b)/2 + \log(2).
\]
So, for large $l(a, b),$ we see that 
\[
\ell(\gamma)/l(a, b, c) \approx 1/(k+1).
\]
From this, and the results of Rivin( \cite{geomded})and Mirzakhani (\cite{mirzakhcurves}, see also \cite{rivinmirzakh})  we see that the order of growth of the number of geodesics with a single double point is the same as the order of growth of an orbit of a multicurve. By the results of Mirzakhani \cite{mirzakhcurves} on the \emph{equidistribution} of the mapping class orbit of a multicurve in measured lamination space (and hence the equidistribution of the various ratios of lengths), we get:
\begin{theorem}
\label{mainthm}
The number of the geodesics of length not exceeding $L$ in the mapping class group orbit of a geodesic with a single double point  on $S$ is asymptotic to 
$L^{\dim \teich(S)}.$
\end{theorem}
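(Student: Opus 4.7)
The plan is to translate the counting of geodesics with a single self-intersection into counting of multicurves via Theorem \ref{pantsthm}, and then invoke Mirzakhani's equidistribution and counting results.

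First, by Theorem \ref{pantsthm}, every geodesic $\gamma$ with a single double point is contained in a unique embedded pair of pants $\Pi(\gamma)$ with boundary multicurve $m(\gamma) = \gamma_1 \cup \gamma_2 \cup \gamma_3$, and this assignment is $\mcg(S)$-equivariant and finite-to-one at the level of multicurves. Fixing the $\mcg(S)$-orbit $\mathcal{O}_\gamma$ of a base geodesic $\gamma_0$, the image $\mathcal{O}_m = \{m(\gamma) : \gamma \in \mathcal{O}_\gamma\}$ is a single $\mcg(S)$-orbit of $3$-component multicurves. It thus suffices to show that $N(L) := \#\{m \in \mathcal{O}_m : \ell(\gamma(m)) \leq L\}$ is asymptotic to $L^{\dim \teich(S)}$.

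Second, by Theorem \ref{pantlength}, $\ell(\gamma(m))$ is an explicit function of the three boundary lengths $\ell(a), \ell(b), \ell(c)$ via formula \eqref{cosheq}. The asymptotic computation carried out earlier in the excerpt shows $\ell(\gamma(m)) = \max(\ell(a) + \ell(b), \ell(c)) + O(1)$ for large boundary lengths, so $\ell(\gamma(m))$ agrees up to bounded additive error with a continuous, positively-homogeneous-of-degree-one function $F$ on the positive octant. This $F$ extends to a continuous degree-one homogeneous function $\tilde F$ on the subcone of $ML(S)$ carrying multicurves of the same topological type as $m$.

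Third, apply Mirzakhani's theorem from \cite{mirzakhcurves}: the orbit $\mathcal{O}_m$ equidistributes in measured lamination space, and consequently for any continuous positively-homogeneous-of-degree-one function $G$ on $ML(S)$ one has $\#\{m' \in \mathcal{O}_m : G(m') \leq L\} \sim c_G \cdot L^{\dim \teich(S)}$ as $L \to \infty$, with $c_G$ a positive constant obtained by integrating the Thurston measure against a suitable homogeneous weight. Taking $G = \tilde F$ and bracketing the set $\{m \in \mathcal{O}_m : \ell(\gamma(m)) \leq L\}$ between $\{\tilde F \leq L - C\}$ and $\{\tilde F \leq L + C\}$ for a uniform constant $C$, whose counts differ by a factor $(1 \pm C/L)^{\dim \teich(S)} = 1 + o(1)$, yields the claim.

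The main technical obstacle is controlling the $O(1)$ additive discrepancy between $\ell(\gamma(m))$ and its leading-order homogeneous approximation $\tilde F$; this is handled by the sandwich argument above, which is precisely why one needs to invoke Mirzakhani's equidistribution statement (or at least her count applied to the continuous functionals $\tilde F \pm C$), rather than merely her count of multicurves of bounded total length.
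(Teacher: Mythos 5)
Your proposal takes essentially the same route as the paper: reduce to counting the $\mcg(S)$-orbit of the boundary multicurve $m=\gamma_1+\gamma_2+\gamma_3$ via Theorem~\ref{pantsthm}, observe from the trace formula of Theorem~\ref{pantlength} that $\ell(\gamma)=\max\bigl(\ell(a)+\ell(b),\,\ell(c)\bigr)+O(1)$, and invoke Mirzakhani's equidistribution of the rescaled orbit in $\mathcal{ML}(S)$ to convert this into an asymptotic $c\,L^{\dim\teich(S)}$. Your explicit sandwich argument for the $O(1)$ additive error is a useful clarification of the step the paper compresses into ``by equidistribution of the various ratios of lengths''; the one place where your writeup is no more rigorous than the paper's is the assertion that $F$ extends to a continuous homogeneous functional on a full-measure subcone of $\mathcal{ML}(S)$ -- decomposing a lamination into labelled components is not a continuous operation, so one really needs to argue via Thurston-measure density of the rescaled orbit against sets of the form $\{F\le 1\}$ with negligible boundary rather than via literal continuous extension, but this is precisely the same gap the paper leaves implicit.
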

\begin{corollary}
\label{maincor}
The number of geodesics with a single double point of length not exceeding $L$ is asymptotic to 
$L^{\dim \teich(S)}.$
 \end{corollary}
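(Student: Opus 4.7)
The plan is to reduce Corollary \ref{maincor} to Theorem \ref{mainthm} by showing that geodesics with a single double point on $S$ partition into only finitely many $\mcg(S)$-orbits, and then summing the per-orbit asymptotics.

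First I would establish the finiteness of orbits. By Theorem \ref{pantsthm}, every closed geodesic $\gamma$ with exactly one self-intersection canonically determines an embedded pair of pants $\Pi \subset S$ whose three boundary components are $\gamma_1, \gamma_2, \gamma_3$; and within $\Pi$ the curve $\gamma$ is, up to the finite symmetry group of the pair of pants, one of a bounded number of free homotopy classes of the form $\gamma_i \gamma_j^{-1}$. So it suffices to show that embedded pairs of pants in $S$ decompose into finitely many $\mcg(S)$-orbits. This is a standard change-of-coordinates consequence of the classification of surfaces: the $\mcg(S)$-orbit of $\Pi$ is determined by the unordered topological types of the connected components of $S \setminus \Pi$ together with the combinatorial pattern by which they are glued along $\partial \Pi$; since $S$ has finite type, there are only finitely many such configurations.

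Next I would apply Theorem \ref{mainthm} to each of the finitely many orbits $\mathcal{O}_1, \dots, \mathcal{O}_k$. Writing $N_i(L)$ for the number of elements of $\mathcal{O}_i$ of length at most $L$, the theorem yields
\[
N_i(L) \sim c_i \, L^{\dim \teich(S)}
\]
for positive constants $c_i$ (or, in the looser reading of ``asymptotic'', at least that each $N_i(L)$ has order of growth $L^{\dim \teich(S)}$, which is what the corollary ultimately needs).

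Finally, since the total count $N(L)$ of geodesics with one self-intersection of length $\leq L$ is the finite sum $N(L) = \sum_{i=1}^{k} N_i(L)$, standard addition of asymptotics yields $N(L) \sim \bigl(\sum_i c_i\bigr) L^{\dim \teich(S)}$, as claimed. The only real content beyond Theorem \ref{mainthm} is the finiteness of mapping class group orbits of embedded pairs of pants; once that is in hand, the corollary is immediate. There is no analytic obstacle here, and the proof is essentially one paragraph of bookkeeping.
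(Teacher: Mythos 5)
Your proposal is correct and follows essentially the same route as the paper: reduce Corollary \ref{maincor} to Theorem \ref{mainthm} by noting that such geodesics fall into finitely many $\mcg(S)$-orbits, then sum. The paper simply asserts the finiteness of orbits for geodesics with a bounded number of self-intersections, whereas you give the more specific (and clean) justification via Theorem \ref{pantsthm} and finiteness of $\mcg(S)$-orbits of embedded pairs of pants; you also correctly flag that the stated ``asymptotic to $L^{\dim\teich(S)}$'' must be read as a growth-order statement (with orbit-dependent constants $c_i$) for the summation step to make sense.
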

 \begin{proof}
 The result is immediate from Theorem \ref{mainthm} and the observation that there is a finite number of mapping class orbits of geodesics with a bounded number of self-intersections.
 \end{proof}
 
 \subsection{Multicurves}
 Let us make the following definition:
 \begin{definition}
 A \emph{one-multicurve} on a surface $S$ is a collection of curves $\gamma_1, \dotsc, \gamma_k$ where
 \begin{enumerate}
 \item $\lambda_i \cap \lambda_j = \emptyset,$ for all $i, j.$
 \item $\lambda_i$ is a simple curve for all $i>1.$
 \item $\lambda_1$ has a single double point.
 \end{enumerate}
 A \emph{geodesic one-multicurve} on $S$ is a one-multicurve where the curves $\gamma_1, \dots, \gamma_k$ are geodesics. The \emph{length}  $\ell(\lambda)$ of a geodesic multicurve $\lambda$ is the sum of the lengths of the curves $\lambda_i.$
 \end{definition}
 With this definition in place, we have the following theorem:
 \begin{theorem}[growth of multicurves]
 \label{multigrowth}
Let $S$ be a hyperbolic surface, and  let $\lambda$ be a geodesic one-multicurve on $S.$ Consider the set $\Lambda(x) = \{\gamma \in \mcg(\lambda) \ \left| \ell(\gamma) \leq x \right.\}.$ Then
\[
|\Lambda(x)| \sim x^{\dim \teich(S)}.
\]
\end{theorem}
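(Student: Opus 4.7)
The plan is to reduce to Mirzakhani's equidistribution theorem for ordinary (simple) multicurves, exactly as in the proof of Theorem \ref{mainthm}, by replacing the self-intersecting component of $\lambda$ with the boundary of its surrounding pair of pants. Write $\lambda = \lambda_1 \cup \lambda_2 \cup \cdots \cup \lambda_k$, where $\lambda_1$ is the component with the single double point. By Theorem \ref{pantsthm}, $\lambda_1$ is contained in an embedded pair of pants $\Pi$ with simple geodesic boundary components $a, b, c$; by Lemma \ref{pantsinter}, each of the remaining $\lambda_i$ is either disjoint from $a \cup b \cup c$ or coincides with one of those three curves. Hence
\[
\mu(\lambda) \;=\; \{a, b, c\} \cup \{\lambda_2, \dotsc, \lambda_k\}
\]
is a genuine simple multicurve on $S$, and the assignment $\lambda \mapsto \mu(\lambda)$ is $\mcg(S)$-equivariant and at most $3$-to-$1$ (since the pair of pants $\Pi$ admits at most three choices of which two boundary components play the roles of $\gamma_1, \gamma_2$ in the decomposition $\lambda_1 = \gamma_1 \gamma_2^{-1}$).

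Next I would compare lengths. Theorem \ref{pantlength} combined with the elementary estimates immediately following Corollary \ref{maincor} gives
\[
\tfrac{1}{2}\ell(\lambda_1) \;=\; \max\bigl(\tfrac{1}{2}(\ell(a)+\ell(b)),\, \tfrac{1}{2}\ell(c)\bigr) + O(1),
\]
so that the total length $\ell(\lambda) = \sum_i \ell(\lambda_i)$ and a suitable continuous, positively homogeneous functional $F(\mu(\lambda))$ (namely $F = 2\max\!\bigl(\tfrac12(\ell(a)+\ell(b)), \tfrac12\ell(c)\bigr) + \sum_{i \geq 2}\ell(\lambda_i)$) satisfy $\ell(\lambda) = F(\mu(\lambda)) + O(1)$, uniformly over the entire orbit. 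Crucially, $F$ extends to a continuous, homogeneous, nonnegative functional on the space $\mathcal{ML}(S)$ of measured laminations, positive on the projective class of $\mu(\lambda)$.

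Finally I would invoke Mirzakhani's equidistribution theorem for mapping class group orbits of multicurves in $\mathcal{ML}(S)$ (see \cite{mirzakhcurves, rivinmirzakh}). That result asserts that for any continuous positive homogeneous functional $F$ on $\mathcal{ML}(S)$,
\[
\#\{\mu' \in \mcg(S)\cdot \mu(\lambda) : F(\mu') \leq x\} \;\sim\; c_F \, x^{\dim \teich(S)},
\]
with $c_F$ depending on $F$ and the orbit. Applying this with our $F$, absorbing the uniform $O(1)$ discrepancy into the asymptotic, and dividing by the bounded multiplicity of $\lambda \mapsto \mu(\lambda)$, one obtains the desired $|\Lambda(x)| \sim c\, x^{\dim \teich(S)}$.

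The main obstacle is verifying the last step rigorously: one must check that Mirzakhani's theorem really does apply to the non-standard length functional $F$ (rather than just hyperbolic length), and that the uniform $O(1)$ additive error and the at-most-$3$-to-$1$ correspondence do not spoil the asymptotic. Both issues are essentially the same ones that arose in the proof of Theorem \ref{mainthm} and are handled in the same way, using the fact that Mirzakhani's equidistribution is strong enough to give equidistribution of the ratios of lengths of the various components of $\mu(\lambda)$.
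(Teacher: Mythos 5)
Your argument is correct and follows essentially the same route as the paper: replace $\lambda_1$ by the three boundary curves $a,b,c$ of its pair of pants (Theorem \ref{pantsthm}), use Lemma \ref{pantsinter} to conclude the result is a simple multicurve, and invoke the argument of Theorem \ref{mainthm} via Mirzakhani's equidistribution. You supply somewhat more detail than the paper — in particular, you spell out the comparison functional $F$, the uniform $O(1)$ length discrepancy, and the bounded (by equivariance, in fact constant) multiplicity of $\lambda\mapsto\mu(\lambda)$ — but these are exactly the points the paper compresses into ``the result follows by the argument used to prove Theorem \ref{mainthm}.''
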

\begin{proof}
Let $\lambda = (\lambda_1, \dotsc, \lambda_k).$
Replace the self-intersecting curve $\lambda_1$ by the three curves whose existence is postulated in Theorem \ref{pantsthm} -- call these curves $\lambda_{k+1}, \lambda_{k+2}, \lambda_{k+3}.$
By Lemma \ref{pantsinter}, the object $\lambda^\prime = (\lambda_2, \dotsc, \lambda_k, \lambda_{k+1}, \lambda_{k+2}, \lambda_{k+3})$ is a multicurve (it is possible that some of the curves are covered multiple times), from which the one-multicurve $\lambda$ can be recovered in the obvious way, and the result follows by the argument used to prove Theorem \ref{mainthm}.
\end{proof}
\begin{corollary}
\label{multicor}
The number of one-multicurves  on the surface $S$ of length not exceeding $x$ is asymptotic to $x^{\dim \teich(S)}.$
\end{corollary}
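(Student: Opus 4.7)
The plan is to deduce this corollary from Theorem \ref{multigrowth} in exactly the way Corollary \ref{maincor} is deduced from Theorem \ref{mainthm}: verify that one-multicurves on $S$ fall into only finitely many $\mcg(S)$-orbits, apply Theorem \ref{multigrowth} orbit by orbit, and sum the asymptotics.

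The substantive step is the finite-orbit claim. A one-multicurve $\lambda = (\lambda_1,\dots,\lambda_k)$ on $S$ has, by Theorem \ref{pantsthm}, its self-intersecting component $\lambda_1$ contained in an embedded pair of pants $\Pi$, and by Lemma \ref{pantsinter} each simple component $\lambda_j$ ($j \geq 2$) is disjoint from $\partial\Pi$ (or equals one of its components). Consequently $\{\partial\Pi\} \cup \{\lambda_2,\dots,\lambda_k\}$ is a system of pairwise-disjoint simple closed curves on $S$, so the number of distinct isotopy classes among them is at most $3g-3+n$, where $g,n$ denote the genus and number of punctures of $S$. Up to the $\mcg(S)$-action, such a configuration is specified by the homeomorphism types of the complementary subsurfaces together with the marking of which of them is $\Pi$; this is a finite combinatorial datum, so the change-of-coordinates principle for $\mcg(S)$ produces only finitely many orbits of one-multicurves.

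With the finite-orbit claim in hand, let $\mathcal{O}_1,\dots,\mathcal{O}_N$ enumerate the $\mcg(S)$-orbits of one-multicurves. Theorem \ref{multigrowth}, together with Mirzakhani's equidistribution theorem used in its proof, provides positive constants $c_i$ such that
\[
|\{\gamma \in \mathcal{O}_i : \ell(\gamma) \leq x\}| \sim c_i\, x^{\dim \teich(S)}.
\]
Summing over the finitely many orbits preserves the leading $x^{\dim\teich(S)}$ behavior and yields
\[
|\{\lambda : \ell(\lambda) \leq x\}| \sim \Bigl(\sum_{i=1}^N c_i\Bigr)\, x^{\dim \teich(S)},
\]
which is the asserted asymptotic. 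I expect the verification of finitely many orbits to be the only step requiring any care; everything else is bookkeeping carried over from the proof of Theorem \ref{multigrowth}.
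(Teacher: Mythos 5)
Your proposal matches the paper's own argument: the paper deduces Corollary \ref{multicor} from Theorem \ref{multigrowth} by simply asserting that one-multicurves fall into finitely many mapping class group orbits and summing over them. You supply the (correct) justification for the finite-orbit claim via Theorem \ref{pantsthm}, Lemma \ref{pantsinter}, and the change-of-coordinates principle, which the paper leaves implicit, and you are more careful than the paper in keeping track of the multiplicative constants $c_i$ coming from Mirzakhani's theorem.
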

\begin{proof}
There is a finite number of mapping class orbits of one-multicurves. The result now follows by applying Theorem \ref{multigrowth} to each of them.
\end{proof}
 
\section{Punctured tori}
\label{ptorus}
The results of the previous section have particularly a particularly nice form when the surface $S$ is a once-punctured torus. A punctured torus $T,$ can be cut along a simple closed geodesic $\gamma$ into a sphere with two boundary components (each of length $\ell(\gamma)$) and one cusp. This means that to each simple geodesic $\gamma$ we can associate two geodesics $\gamma_1$ and $\gamma_2$ both of which have a single self intersection, are of the same length,  and if the translation corresponding to $\gamma$ is $A$ and that corresponding to $\gamma_1$ is $B,$ then, from Eq.\eqref{trace1},
\begin{equation}
\label{trace2}
\tr(B) = 3 \tr(A).
\end{equation}
If $\ell(\gamma)$ is large, Eq. \eqref{trace2} implies that $\ell(\gamma_1) \approx \ell(\gamma) + 2\log 3.$ Set $N_0(L, T)$ be the number of simple geodesics of length bounded above by $L$ on the punctured torus $T,$ and let $N_1(L, T)$ be the number of geodesics with a single double point on the same torus, then.
\begin{theorem}
\label{simpnosimp}
\[N_0(L, T) \sim N_1(L, T)/2.\]
\end{theorem}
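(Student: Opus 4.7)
The plan is to set up a 2-to-1 correspondence from simple closed geodesics on $T$ to closed geodesics with one self-intersection on $T$, then combine it with the length relation \eqref{trace2} and the polynomial growth of $N_0$.

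First, by Theorem~\ref{pantsthm}, any closed geodesic $\gamma'$ on $T$ with one self-intersection sits in a unique embedded pair of pants $\Pi$. Because $\chi(T \setminus \Pi) = \chi(T) - \chi(\Pi) = -1 - (-1) = 0$ and $T$ has a single cusp, the only way to fit a pair of pants into $T$ is for one boundary component of $\Pi$ to be the cusp and for the remaining two to cobound an annulus in $T$; in particular, these two are freely homotopic to a single simple closed geodesic $\gamma$. This sends $\gamma' \mapsto \gamma$, and the fibers are exactly the once-self-intersecting geodesics contained in $\Pi_\gamma$, the pair of pants obtained by cutting $T$ along $\gamma$.

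To count those fibers, observe that in $\Pi_\gamma$ with boundaries $a, b$ (two copies of $\gamma$) and $c$ (the cusp), the three free-homotopy classes of closed curves with a single self-intersection are $a^{-1}b$, $a^{-1}c$, $b^{-1}c$. Set $\pi_1(T) = \langle x, y\rangle$ with cusp class $[x, y]$ and $\gamma = x$, so that on $T$ we have $a = x$, $b = yxy^{-1}$, and $c = (ab)^{-1} = yx^{-1}y^{-1}x^{-1}$. Then $a^{-1}b = x^{-1}(yxy^{-1}) = [x^{-1}, y]$ is conjugate to $[x, y]^{-1}$, so it is parabolic and has no closed geodesic representative on $T$. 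By contrast, $a^{-1}c$ and $b^{-1}c$ have homology class $-3[\gamma]$ in $H_1(T)$, hence are hyperbolic; the pants involution of $\Pi_\gamma$ swapping $a \leftrightarrow b$ and fixing $c$ interchanges them, so they have the same length $L_\gamma$, which by \eqref{trace2} satisfies
\[
\cosh(L_\gamma/2) = 3\cosh(\ell(\gamma)/2), \qquad \text{hence} \qquad L_\gamma = \ell(\gamma) + 2\log 3 + o(1)
\]
as $\ell(\gamma) \to \infty$.

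Putting everything together,
\[
N_1(L, T) = 2 \cdot \#\{\gamma \text{ simple closed geodesic on } T \colon L_\gamma \leq L\} = 2\, N_0\bigl(L - 2\log 3 + o(1),\, T\bigr).
\]
Mirzakhani's theorem gives $N_0(L, T) \sim cL^2$, matching the polynomial rate recorded in Corollary~\ref{maincor}; the bounded shift inside $N_0$ is therefore asymptotically negligible, and we conclude $N_1(L, T) \sim 2\, N_0(L, T)$, i.e.\ $N_0(L, T) \sim N_1(L, T)/2$. The only non-routine step is the ``two, not three'' count -- confirming that one of the three candidate classes in $\Pi_\gamma$ degenerates to the cusp on $T$ -- which is settled by the short $\pi_1$ computation above.
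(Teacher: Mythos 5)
You set $a = x$, $b = yxy^{-1}$, and $c = (ab)^{-1} = yx^{-1}y^{-1}x^{-1}$, and claim $c$ is the cusp. But $c$ has homology class $-2[x] \neq 0$ in $H_1(T)$, so it cannot be conjugate to the cusp $[x,y]$ (which is null-homologous). Indeed, $\tr(c) = \tr(yx^{-1}y^{-1}x^{-1}) = t_\gamma^2 + 2$, so $c$ is hyperbolic, not parabolic. For the relation $abc = 1$ to produce the cusp as the third boundary, you must take $b = yx^{-1}y^{-1}$ (conjugate to $x^{-1}$, not $x$): then $ab = xyx^{-1}y^{-1} = [x,y]$ and $c = [x,y]^{-1}$ is parabolic as required. (Your identification of $c$ also propagates into the homology claim: with the corrected $c$, $[a^{-1}c] = -[\gamma]$ and $[b^{-1}c] = +[\gamma]$, not $\pm3[\gamma]$; this is consistent with Theorem~\ref{torushomology}.)

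The consequence is that your crucial ``two, not three'' step does not hold as argued. With the corrected boundary elements, \emph{all three} of the figure-eight classes $a^{-1}b$, $a^{-1}c$, $b^{-1}c$ are hyperbolic: the pants trace identity~\eqref{trace1} gives $|\tr(a^{-1}c)| = |\tr(b^{-1}c)| = 3t_\gamma$ and $|\tr(a^{-1}b)| = t_\gamma^2 + 2$. The third curve $a^{-1}b$ sits in the interior of the convex subsurface $\Pi_\gamma$ (the pants has totally geodesic boundary), so it is a $T$-geodesic and it inherits its single self-intersection from the abstract pair of pants; it lies in the non-primitive homology class $\pm2[\gamma]$. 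So the fiber of your map over $\gamma$ appears to have three, not two, elements. The third one has length roughly $2\ell(\gamma)$, so if it is counted it contributes $\approx N_0(L/2) \sim N_0(L)/4$ to $N_1(L)$, and the asymptotic ratio would be $9/4$ rather than $2$. Either that extra curve must be excluded for a reason you have not given (for instance by showing that it in fact has more than one self-intersection on $T$, or that it is already double-counted, neither of which your argument establishes), or the constant requires revision. The paper's own discussion leading to Theorem~\ref{simpnosimp} is even terser --- it simply asserts the two associated geodesics of trace $3t_\gamma$ and then states the asymptotic --- and does not address this third figure-eight at all, so this is a gap you should resolve rather than inherit.

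Aside from this, the scaffolding of your argument is sound and somewhat more careful than the paper's: the Euler-characteristic observation that any embedded pair of pants in a once-punctured torus has two boundary components cobounding an annulus and one peripheral, the use of the pants involution to see that $a^{-1}c$ and $b^{-1}c$ have equal length, and the final reduction via Mirzakhani's polynomial growth $N_0(L,T) \sim cL^2$ to absorb the additive shift $2\log 3$ are all correct in spirit. The point that needs repair is the fiber count.
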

\begin{remark}
 The same result holds when instead of a punctured torus we consider a torus with a geodesic boundary component.
\end{remark}
In addition, the two geodesics $\gamma_1$ and $\gamma_2$ are \emph{homologous} to $\gamma,$ which gives us the following result:
\begin{theorem}
\label{torushomology}
There are exactly two simple closed geodesics with one self intersection in each primitive homology class on the punctured torus.
\end{theorem}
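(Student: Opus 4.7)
The plan is to combine Theorem \ref{pantsthm} with the classical fact that on the once-punctured torus $T$, unoriented simple closed geodesics are in bijection with primitive classes in $H_1(T;\integers)$ up to sign.

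First, I would classify the embedded pairs of pants inside $T$. Given any such $\Pi$, since $\chi(T)=\chi(\Pi)=-1$, the complement $T\setminus\mathrm{int}(\Pi)$ has Euler characteristic $0$ and must decompose into components each of which is an annulus or a once-punctured disk. Matching the three boundary components of $\Pi$ with the boundaries of these components, and accounting for the single puncture of $T$, forces the complement to consist of exactly one annulus and one once-punctured disk. Consequently one boundary component of $\Pi$ is a cusp loop $c$, while the other two, $\delta_1$ and $\delta_2$, are identified by the annulus to a single simple closed geodesic $\delta_\Pi$ in $T$; in particular $\Pi$ is recovered by cutting $T$ along $\delta_\Pi$, and in $H_1(T;\integers)$ one has $[c]=0$ and $[\delta_1]=-[\delta_2]=\pm[\delta_\Pi]$.

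Now fix a primitive $\alpha\in H_1(T;\integers)$ and take $\delta_\Pi=\delta$, the unique simple closed geodesic with $[\delta]=\pm\alpha$. By Theorem \ref{pantsthm}, the single-self-intersection closed geodesics in $\Pi$ are represented (up to inversion) by $\delta_1\delta_2^{-1}$, $\delta_1 c^{-1}$, and $\delta_2 c^{-1}$, with homology classes $\pm 2\alpha$, $\pm\alpha$, and $\mp\alpha$ respectively. Exactly two of these three sit in the primitive class $\pm\alpha$, and they are distinct in $T$ because their geodesic representatives lie in $\mathrm{int}(\Pi)$, where they are already distinct and the map $\Pi\hookrightarrow T$ is injective.

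Finally, I would prove uniqueness. Suppose $\gamma$ is any single-self-intersection closed geodesic with $[\gamma]=\pm\alpha$. By Theorem \ref{pantsthm}, $\gamma$ lies in an embedded pair of pants $\Pi'$; by the classification above, $\Pi'$ is cut out by a simple closed geodesic $\delta_{\Pi'}$, and the class of $\gamma$ is either $\pm[\delta_{\Pi'}]$ or $\pm 2[\delta_{\Pi'}]$. Primitivity of $\alpha$ forces $\pm[\delta_{\Pi'}]=\pm\alpha$, so by uniqueness of the simple representative $\delta_{\Pi'}=\delta$ and $\Pi'=\Pi$, whence $\gamma$ is one of the two geodesics constructed above. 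The main technical obstacle is the topological classification of embedded pairs of pants in $T$ (the second paragraph); once that is in hand, the rest is a straightforward homology computation inside a single fixed pair of pants.
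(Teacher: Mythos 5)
Your proposal is correct and follows the same basic strategy as the paper (cut the once-punctured torus along a simple closed geodesic to obtain a pair of pants, then analyze the self-intersecting geodesics living there), but it is substantially more complete. The paper's treatment is a one-paragraph sketch: it associates to each simple closed geodesic $\delta$ the two self-intersecting geodesics whose traces equal $3t_\delta$ via Eq.~\eqref{trace2}, notes they are homologous to $\delta$, and asserts the theorem. What the paper leaves implicit, and you supply, is (a) a topological classification of embedded pairs of pants in $T$ showing that each arises uniquely as the complement of a simple closed geodesic (so that every self-intersecting geodesic is captured), (b) the observation that a pair of pants contains exactly three single-self-intersection geodesics, and the homology bookkeeping that places two of them in the class $\pm[\delta]$ and the third in the imprimitive class $\pm 2[\delta]$ (this last point explains why the count is exactly two and not three), and (c) an explicit uniqueness argument via the pair of pants associated to an arbitrary such geodesic. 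One small point you might tighten: in the classification step you should briefly rule out disk components of the complement (a boundary circle of $\Pi$ cannot bound a disk since it is freely homotopic to a geodesic), and note that in the geodesic picture the ``annulus'' you refer to is degenerate --- the two geodesic boundary circles $\delta_1,\delta_2$ literally coincide as the single geodesic $\delta_\Pi$, and the ``cusp loop'' boundary is actually the cusp itself rather than a geodesic circle. Neither affects the homology computation or the conclusion.
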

\begin{remark}
Combinatorial results on geodesics with a single self-intersection on a punctured torus have been obtained by D. Crisp and W. Moran in \cite{crisp1}.
\end{remark}
\subsection{Pairs of curves with one intersection}
Consider a punctured torus $T$ and a pair of simple closed geodesics $\gamma_1,\gamma_2$ which intersect exactly once. Let $\gamma = \gamma_1 \cup \gamma_2,$ and define  $\ell(\gamma) = \ell(\gamma_1) + \ell(\gamma_2).$ We first note the following observation:
\begin{theorem}
\label{singleorb}
All pairs of curves $\gamma$ as defined above lie in a single mapping class orbit.
\end{theorem}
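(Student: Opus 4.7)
The plan is to reduce Theorem \ref{singleorb} to a familiar statement about bases of $H_1(T;\integers)\cong\integers^2$ and invoke the classical identification $\mcg(T)\cong\SL(2,\integers)$ coming from the action on homology.

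First I would establish that if $\gamma_1,\gamma_2$ are simple closed geodesics on $T$ meeting transversely at a single point $p$, then the based loops at $p$ form a free basis of $\pi_1(T,p)\cong F_2$. Cutting $T$ along $\gamma_1\cup\gamma_2$ produces a connected once-punctured disk (a quadrilateral with opposite sides identified in the pattern $aba^{-1}b^{-1}$ with a single interior puncture), so by van Kampen the classes of $\gamma_1$ and $\gamma_2$ generate $\pi_1(T,p)$; since this group is free of rank two, they must be a free basis. Consequently their homology classes form a $\integers$-basis of $H_1(T;\integers)$.

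Next I would combine two classical ingredients: (a) simple closed geodesics on a once-punctured torus are in bijection with primitive elements of $H_1(T;\integers)$ taken up to sign, so a simple closed geodesic is determined by its unoriented homology class; and (b) the natural map $\mcg(T)\to\SL(2,\integers)$ induced by the action on $H_1$ is an isomorphism. Given two pairs $(\gamma_1,\gamma_2)$ and $(\gamma_1',\gamma_2')$ as in the hypotheses, I would choose orientations so that their homology classes form bases of $\integers^2$, and pick $\phi\in\mcg(T)$ realizing the $\SL(2,\integers)$-change of basis sending the first to the second. Then $\phi(\gamma_i)$ and $\gamma_i'$ share a homology class, and by (a) they coincide as geodesics; hence $\phi$ carries the unordered union $\gamma_1\cup\gamma_2$ to $\gamma_1'\cup\gamma_2'$. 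Any ordering ambiguity is harmless because swapping the two basis vectors of $\integers^2$ is itself realized in $\SL(2,\integers)$ (e.g.\ by an element of order four), and hence by a mapping class.

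The main point requiring care is ingredient (a) -- the bijection between isotopy classes of essential simple closed curves on the punctured torus and primitive unoriented classes in $H_1$ -- but this is classical, following for instance from the description of isotopy classes of essential simple closed curves on $T$ via the Farey tessellation of the boundary of $\teich(T)$. So I do not anticipate a genuine obstacle; the proof is essentially a packaging of well-known facts.
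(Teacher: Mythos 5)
Your proposal is correct and follows essentially the same route as the paper: both arguments reduce to (a) the bijection between unoriented simple closed geodesics on the punctured torus and primitive classes in $H_1(T;\integers)$ up to sign, and (b) the isomorphism $\mcg(T)\cong\SL(2,\integers)$ acting transitively on bases of $\integers^2$. The only cosmetic difference is how you see that the two classes form a basis: you invoke a van Kampen/cut-and-paste argument that they freely generate $\pi_1(T)$, whereas the paper goes directly through the formula $i(\gamma_1,\gamma_2)=\bigl|\det\bigl(\begin{smallmatrix}a&b\\c&d\end{smallmatrix}\bigr)\bigr|$, so geometric intersection number one forces the homology classes to be unimodular.
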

\begin{proof}
There is exactly one simple closed geodesic in each integral homology class on the punctured torus, and the intersection number of two curves whose classes are $(a, b)$ and $(c, d)$ is given by $\bigl| \begin{smallmatrix}
a&b\\
c&d \end{smallmatrix} \bigr|.$ The previous statement is well-known, see, for example, \cite{mcrivin1}. Thus, pairs of curves $\gamma$ correspond exactly to images of pairs of standard generators of the fundamental group of the punctured torus under the mapping class group of the standard torus (which is well-known to be isomorphic to $\SL(2, \mathbb{Z}),$ where the isomorphism maps a mapping class to the matrix $\bigl(\begin{smallmatrix} a & b \\ c & d\end{smallmatrix}\bigr)$ as above.
\end{proof}
We now recall the results of \cite{mcrivin1,mcrivin2}, where it is shown, in particular, that for a fixed hyperbolic structure on the punctured torus $T,$ the length of the (unique) simple closed curve in a homology class $(a, b)$ is given by $\beta(a, b),$ where $\beta$ is the restriction to $\integers^2$ of a Banach space norm on $\reals^2,$ and so $\ell(\gamma) = \beta(a, b) + \beta(c, d).$ We can now use the results of \cite{eskinmcmullen} to show the following result:
\begin{theorem}
\label{asympeight}
The number $N_x$ of pairs of curves $\gamma$ with $\ell(\gamma) < x$ is asymptotic to $x^2.$
\end{theorem}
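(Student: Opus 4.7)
The plan is to translate the count to lattice points in $\SL(2,\mathbb{R})$ and invoke the Eskin--McMullen equidistribution theorem.

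I would begin by using Theorem \ref{singleorb} to encode each pair $(\gamma_1,\gamma_2)$, up to a finite orientation/ordering ambiguity, as a matrix $M=\bigl(\begin{smallmatrix}a&b\\c&d\end{smallmatrix}\bigr)\in\SL(2,\mathbb{Z})$ whose rows give the primitive homology classes of $\gamma_1,\gamma_2$. By \cite{mcrivin1,mcrivin2}, the simple closed geodesic in class $(p,q)$ has length $\beta(p,q)$ for a fixed norm $\beta$ on $\mathbb{R}^2$, so
\[
\ell(\gamma)=\beta(a,b)+\beta(c,d),
\]
and (up to a bounded factor) $N_x$ counts matrices in $\SL(2,\mathbb{Z})$ with this sum less than $x$. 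Setting $B_x=\{g\in\SL(2,\mathbb{R}):\beta(u(g))+\beta(v(g))<x\}$, where $u(g),v(g)$ are the rows of $g$, the problem reduces to estimating $\#(B_x\cap\SL(2,\mathbb{Z}))$.

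Next, I would invoke Eskin--McMullen \cite{eskinmcmullen} applied to $\SL(2,\mathbb{Z})\subset\SL(2,\mathbb{R})$, which gives
\[
\#(B_x\cap\SL(2,\mathbb{Z}))\sim\frac{\vol(B_x)}{\vol(\SL(2,\mathbb{R})/\SL(2,\mathbb{Z}))},
\]
provided $\{B_x\}$ is well-rounded; this is routine since $\beta$ is continuous and equivalent to the Euclidean norm, so small $\epsilon$-thickenings of $B_x$ add only $O(\epsilon)\cdot\vol(B_x)$. To compute the volume, I would parameterize $\SL(2,\mathbb{R})$ by $(u,s)\in(\mathbb{R}^2\setminus\{0\})\times\mathbb{R}$, writing the second row as $v=u^\perp/|u|^2+su$. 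A direct Jacobian calculation shows Haar measure pulls back to $du\,ds$ up to a nonzero constant. In the orthonormal basis $u/|u|,u^\perp/|u|$ one gets $v=(s|u|,|u|^{-1})$, so $\beta(v)\sim|s|\beta(u)$ in the dominant regime, and for each $u$ with $\beta(u)<x$ the admissible $s$-interval has length $\sim 2(x-\beta(u))/\beta(u)$. Polar integration in $u$ then yields
\[
\vol(B_x)\sim x^2\int_{S^1}\frac{d\omega}{\beta(\omega)^2}=cx^2
\]
with $c>0$, giving $N_x\sim c'x^2$.

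The main work lies in the volume computation---in particular, verifying that Haar measure becomes $du\,ds$ in these coordinates, and controlling the $s$-interval length uniformly for small $|u|$, where the $|u|^{-1}\omega^\perp$ correction in $v$ becomes significant (though one checks it contributes only a lower-order term). Well-roundedness is standard and the Eskin--McMullen input can be treated as a black box, so I expect no conceptual obstacle beyond organizing the volume integral cleanly.
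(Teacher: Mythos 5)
Your proposal is correct and follows the same overall strategy as the paper: encode each pair as a matrix in $\SL(2,\mathbb{Z})$, observe $\ell(\gamma)=\beta(a,b)+\beta(c,d)$ via the \cite{mcrivin1,mcrivin2} norm $\beta$, and reduce to counting $\SL(2,\mathbb{Z})$-lattice points in the well-rounded family $B_x$ using Eskin--McMullen. Where you diverge is the final step. The paper, having established well-roundedness via \cite{eskinmcmullen}, does not compute $\vol(B_x)$; instead it compares the count to the corresponding count for the Frobenius-norm balls and cites Newman's theorem \cite{newmanmats} that the latter grows quadratically, concluding the order of growth is $x^2$ (with the proportionality constant left implicit, as the paper notes afterward). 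You instead compute $\vol(B_x)$ directly by parametrizing $\SL(2,\mathbb{R})$ as $(u,s)\mapsto\bigl(u,\,u^\perp/|u|^2+su\bigr)$, verifying Haar measure is $du\,ds$ (which is right: starting from $\tfrac{1}{|a|}\,da\,db\,dc$ and substituting $c=-b/|u|^2+sa$ gives $dc=a\,ds$), and integrating in polar coordinates to get $\vol(B_x)\sim x^2\int_{S^1}\beta(\omega)^{-2}\,d\omega$. Your treatment of the apparent singularity at small $|u|$ is also sound: since $u^\perp/|u|^2$ and $su$ are orthogonal, $|v|\geq 1/|u|$, so small $|u|$ contributes nothing once $1/|u|$ exceeds a constant multiple of $x$. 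Your route is a bit longer but more self-contained and has the advantage of exhibiting the proportionality constant explicitly as an integral of $\beta^{-2}$ over the circle, rather than deferring to Newman's count; the paper's route is shorter but only pins down the order of growth unless one separately carries out the volume computation you do.
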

\begin{proof}
By the discussion above, this reduces to counting matrices $M = \bigl(\begin{smallmatrix} a & b \\ c & d\end{smallmatrix}\bigr) \in \SL(2, \integers),$ such that $\beta(M)= \beta(a, b) + \beta(c, d) \leq x.$ 
Since, by \cite{eskinmcmullen}[Proposition 6.2] the sets $B_x = \{M | \beta(M) \leq x$ are well-rounded, the asymptotic growth rate of $\SL(2, \integers) \cap B_x$ is the same as that of $\SL(2, \integers) \cap N_x,$ where $N_x$ is the set of matrices of Frobenius norm bounded by $x.$ The latter set is known to grow quadratically, by the results of M. Newman \cite{newmanmats} (these results, of course, are a very simple special case of the results of \cite{eskinmcmullen,drs93}, but it is a little tricky to extract it from these last two papers).
\end{proof}
The statement ofTheorem \ref{asympeight} has no error bounds (nor an explicit proportionality constant, but that will depend on the hyperbolic metric on the punctured torus)The author believes that the following conjecture holds:
\begin{conjecture}
\label{asympconj}
The error term in the asymptotic approximation for $N_x$ is at least of the order of $O(x),$ and at most of the order of $O(x \log x).$
\end{conjecture}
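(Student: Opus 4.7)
The plan is to reduce the conjecture to a Gauss-circle-type analysis for the Banach norm $\beta$ on $\reals^2$, by parametrizing $\SL(2,\integers)$ matrices via their primitive first row. Each $M = \bigl(\begin{smallmatrix} a & b \\ c & d \end{smallmatrix}\bigr) \in \SL(2,\integers)$ is uniquely of the form $(v,\, w_0(v) + nv)^t$ with $v = (a,b)$ primitive, $w_0(v)$ any fixed solution of $\det(v, w_0(v)) = 1$, and $n \in \integers$; setting $s := \beta(v)$ one has
\[
N_x = \sum_{\substack{v \text{ primitive} \\ s \le x}} \#\{n \in \integers : \beta(w_0(v) + nv) \le x - s\},
\]
and since $\beta$ is a norm the inner count is the number of integers in a single interval on $\reals$, equal to $2(x-s)/s + \epsilon_v(x)$ with $|\epsilon_v(x)| \le 1$. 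Writing $\pi_\beta(T) := \#\{v \text{ primitive} : \beta(v) \le T\} = c_\beta T^2 + E(T)$, Abel summation against $\pi_\beta$ rewrites $N_x$ as a quadratic $c x^2$ main term plus an error of the shape
\[
2x \int_0^x \frac{E(s)}{s^2}\, ds \;+\; \sum_{v \text{ primitive},\, s \le x} \epsilon_v(x),
\]
so the conjecture becomes a statement about this two-part error.

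For the upper bound $O(x \log x)$, I would use the elementary Gauss-circle bound $E(T) = O(T)$ for the convex symmetric body $\{\beta \le 1\}$ (derived by M\"obius inversion from the non-primitive count), which immediately yields $2x \int_0^x |E(s)|/s^2\, ds = O(x \log x)$; for the discrepancy sum, Weyl equidistribution of the intercepts $w_0(v) \bmod v$ as $v$ ranges over primitive vectors, reducible to bounds on Kloosterman-type sums associated to $\beta$, gives $\sum_v \epsilon_v(x) = O(x^{1 + o(1)})$, and the total error is then $O(x \log x)$.

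For the lower bound $\Omega(x)$, the strategy is to combine a Hardy-type $\Omega$-result $|E(s)| \ge c s^{1/2}$ for the Gauss-circle error of a convex symmetric body on a positive-density set of $s$, traced through the integration-by-parts formula, with an explicit lower bound on the discrepancy contribution coming from the finitely many \emph{shortest} primitive vectors $v$, whose $\epsilon_v(x)$ are periodic in $x/s$ and manifestly non-averaging. The main obstacle is ruling out a conspiracy in which these two sources of fluctuation systematically cancel each other to produce an error of smaller order; such cancellation is conceivable for hyperbolic structures with extra symmetry, so a robust proof of $\Omega(x)$ may require either a spectral argument along the lines of Selberg's treatment of the Frobenius norm, or a direct analysis of the aperiodicity of the unit $\beta$-ball against the lattice $\integers^2$ for structures in an open dense subset of $\teich(T)$.
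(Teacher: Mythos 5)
The statement you are trying to prove is labeled a \emph{Conjecture} in the paper and carries no proof; the author explicitly writes that Theorem~\ref{asympeight} comes with no error term and that he merely \emph{believes} the conjectured bounds, pointing to \cite{mcrivin2} for ``some evidence.'' So there is no paper argument to compare against, and you should be aware that you are attacking an open problem, not reconstructing a suppressed proof.

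That said, your sketch for the $O(x\log x)$ upper bound is a sensible Gauss-circle-type attack, and the reduction (parametrize $\SL(2,\integers)$ by primitive first row $v$, count the integer translates $w_0(v)+nv$ in a sub-level set of $\beta$, then Abel-sum against $\pi_\beta$) is clean as far as it goes. The genuine gap is the treatment of the discrepancy sum $\sum_v \epsilon_v(x)$. You have $O(x^2)$ primitive vectors $v$ with $\beta(v)\le x$ and only the trivial bound $|\epsilon_v|\le C$, so the trivial estimate is $O(x^2)$ -- exactly the main term -- and the entire content of the upper bound is hidden in the claimed improvement to $O(x^{1+o(1)})$. You propose to get this from ``Weyl equidistribution of the intercepts $w_0(v)\bmod v$ \dots reducible to bounds on Kloosterman-type sums associated to $\beta$,'' but $\beta$ is the stable norm coming from an arbitrary complete hyperbolic structure on the punctured torus (see \cite{mcrivin1,mcrivin2}); it is a generic transcendental Banach norm with no arithmetic structure, so there is no family of Kloosterman sums to appeal to, and the standard hyperbolic-lattice-point or Selberg/Kuznetsov machinery applies to the Frobenius norm (or to norms invariant under the maximal compact), not to $\beta(a,b)+\beta(c,d)$. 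As written, this step is a hope rather than an argument, and it is where the proof would actually live. Your discussion of the $\Omega(x)$ lower bound is appropriately cautious and you do not claim to have it; the obstruction you name (possible cancellation between the $E(s)$-integral and the discrepancy sum, and possible extra symmetry at special points of $\teich(T)$) is real. In summary: the framework is reasonable, the paper offers no proof to compare with, and the decisive estimate on $\sum_v\epsilon_v(x)$ remains entirely unjustified.
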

Some evidence for Conjecture \ref{asympconj} is given by the results discussed in \cite{mcrivin2}.

\section{McShane's identities}
\label{mcshanesids}
In his doctoral dissertation \cite{mcthesis}, Greg McShane obtained the following amazing identity (which almost immediately came to be known as ``McShane's identity":
\begin{theorem}[McShane's Identity]
On a punctured torus with a complete hyperbolic metric of finite area we have
\begin{equation}
\label{mcidentity1}
\sum \dfrac{1}{e^{\ell(\gamma)} + 1} = \dfrac12,
\end{equation}
where the sum is taken over all the simple closed geodesics.
\end{theorem}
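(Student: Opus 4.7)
The plan is to follow McShane's original proof, which parametrizes the ``space of perpendicular geodesic rays from the cusp'' by a horocycle and partitions it (up to a measure-zero set) by the simple closed geodesics onto which the rays spiral.

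First I would fix a small embedded horocyclic neighborhood of the cusp with boundary horocycle $H$ of total length $L_H$, and for each $p\in H$ let $\sigma_p$ denote the perpendicular geodesic ray into $T$. Call $p$ \emph{bad} if $\sigma_p$ fails to be a simple arc in $T$. The bad set $B\subset H$ is open, so it decomposes into open arc components, which I call \emph{gaps}. I would then set up a bijection between gaps and (unoriented) simple closed geodesics as follows: at an endpoint $p$ of a gap, the ray $\sigma_p$ is a simple arc that is a limit of non-simple arcs, so it must spiral asymptotically onto some simple closed geodesic $\gamma$. The union of this spiral with $\gamma$ bounds an embedded pair of pants in $T$ with one cusp boundary and two copies of $\gamma$, which is precisely the degenerate (cusped) case of Theorem \ref{pantsthm}. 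A direct hyperbolic trigonometry computation inside this pants --- right-angled hexagon with one ideal vertex, combined with $\cosh(\ell(\gamma)/2) = t_\gamma/2$ --- produces the length of the gap(s) associated to $\gamma$ as $L_H/(e^{\ell(\gamma)}+1)$ after the correct combinatorial bookkeeping of the two sides of $\gamma$.

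The remaining ingredient is that $H\setminus B$ has Lebesgue measure zero: this is the Birman--Series theorem, asserting that complete simple geodesics on a finite-type hyperbolic surface form a set of Hausdorff dimension one in the unit tangent bundle, so they meet $H$ in a null set. Summing the gap-length formula over all simple closed geodesics and dividing by $L_H$ then gives the identity; the factor of $1/2$ on the right-hand side emerges from the two-to-one relationship between oriented gaps on $H$ (two per $\gamma$) and unoriented simple closed geodesics.

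The main obstacle I expect is not the trigonometric computation of an individual gap length, nor the measure-zero input (which is a named theorem to be cited), but the topological bijection step --- one must rule out alternative behaviors at a gap endpoint such as $\sigma_p$ returning to the cusp along a simple arc, or spiraling onto a closed geodesic that is itself non-simple. Each alternative needs a short topological argument specific to the once-punctured torus, exploiting the fact (Theorem \ref{pantsthm} and Lemma \ref{pantsinter}) that an embedded pair of pants is essentially forced by the data of a self-intersecting arc together with its limiting simple curve, so that no ``exotic'' limiting behavior can occur on $T$.
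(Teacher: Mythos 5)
The paper does not prove this theorem at all: it quotes it as McShane's result with citations to his thesis \cite{mcthesis} and \cite{mcshane99}, and then uses it as input to derive the variant for geodesics with one self-intersection. So there is no ``paper's proof'' to compare against; any correct proof you give is necessarily going beyond what the paper contains.

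That said, your sketch is indeed the outline of McShane's original horocyclic argument and its key points are in the right place: parametrize perpendicular rays by a fixed horocycle around the cusp, show that the ``bad'' set is open and decomposes into gaps, associate a gap to each simple closed geodesic via spiraling behavior at the gap endpoints, compute the gap length via a right-angled ideal hexagon in the degenerate pair of pants, and invoke Birman--Series to dispose of the complementary set. A few points of the sketch are imprecise and would need tightening if this were to be a real proof. First, the classification on $H$ is a trichotomy, not a dichotomy: a perpendicular ray $\sigma_p$ can (a) self-intersect, (b) be a complete simple geodesic with non-spiraling ends (the Birman--Series null set), or (c) be simple and return to the cusp or spiral onto a closed geodesic; the gap endpoints are of type (c), and there is also an isolated interior point of each gap of type (c), so ``simple arc which is a limit of non-simple arcs'' does not by itself pin down a gap endpoint. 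Second, the sentence attributing the factor $1/2$ to ``oriented gaps'' is not quite the right bookkeeping: with a horocycle of length $1$ each simple closed geodesic $\gamma$ contributes a single gap of length $2/(e^{\ell(\gamma)}+1)$ (the two halves on either side of the isolated simple arc in its interior), and summing to the full horocycle length $1$ and dividing by $2$ gives the $1/2$. Third, the appeal to Theorem \ref{pantsthm} and Lemma \ref{pantsinter} is not really available here, since those results concern closed geodesics with a double point, not arcs from the cusp; the ruling-out of exotic limiting behavior in McShane's argument uses his own analysis of ends of simple geodesics rather than the embedded-pants machinery of Section \ref{geodonegen}. None of these are fatal to the approach --- they are exactly the delicate steps in McShane's proof --- but as written the sketch glosses over them.
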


McShane's Identity \eqref{mcidentity1} can be rewritten by using the trace $t_\gamma$ of the hyperbolic translation corresponding to $\gamma$ as follows:
\begin{equation}
\label{mcidentity1trace}
\sum \left( 1 - \sqrt{1-\left(\dfrac{2}{t_\gamma}\right)^2}\right) = 1,
\end{equation}
using the well-known fact (see, for example, \cite{beardongroups}) that 
\begin{equation}
\label{basictrace}
\tr(\gamma) = 2 \cosh \frac{\ell(\gamma)}2
\end{equation}

Using Eq. \eqref{mcidentity1trace}  and Eq.~\eqref{trace2}, we obtain the ``McShane's identity for curves with one self-intersection''
\begin{theorem}
\label{mcidentity1int}
\begin{equation}
\label{mc2}
\sum\left(1-\sqrt{1-\left(\dfrac{6}{t_\gamma}\right)^2}\right) =2,
\end{equation}
where the sum is taken over all the simple closed geodesics on the punctured torus with one double point.
\end{theorem}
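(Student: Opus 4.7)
The plan is to derive \eqref{mc2} directly from McShane's identity \eqref{mcidentity1trace} by combining the two-to-one correspondence between simple closed geodesics on $T$ and geodesics on $T$ with a single double point (established in Section \ref{ptorus}) with the trace relation \eqref{trace2}.

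First I would enumerate the geodesics-with-one-double-point on $T$ by pairing each with a simple closed geodesic. By Theorem \ref{pantsthm}, every such geodesic $\delta$ lies in a uniquely determined embedded pair of pants $\Pi_\delta \subset T$; an Euler-characteristic count ($\chi(T)=\chi(\Pi_\delta)=-1$) forces $T$ to be obtained from $\Pi_\delta$ by identifying two of its three boundary components, so one boundary of $\Pi_\delta$ is peripheral (homotopic to the cusp) and the other two are freely homotopic to a common simple closed geodesic $\gamma$ on $T$. This defines a map $\delta \mapsto \gamma$, and Theorem \ref{torushomology} (together with the observation preceding Theorem \ref{simpnosimp}) shows that the preimage of each simple $\gamma$ consists of exactly the two companions $\gamma_1,\gamma_2$ of Section \ref{ptorus}.

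Second I would perform the algebraic substitution. By \eqref{trace2}, if $\delta \in \{\gamma_1,\gamma_2\}$ then $t_\delta = 3 t_\gamma$, so $6/t_\delta = 2/t_\gamma$ and
\[
1 - \sqrt{1 - \left(\tfrac{6}{t_\delta}\right)^{2}} \;=\; 1 - \sqrt{1 - \left(\tfrac{2}{t_\gamma}\right)^{2}}.
\]
Summing over $\delta\in\{\gamma_1,\gamma_2\}$ and over all simple closed geodesics $\gamma$ on $T$, the left-hand side is precisely the sum appearing in \eqref{mc2} (each $\delta$ is counted exactly once, by the previous step), while the right-hand side equals $2$ times the left-hand side of \eqref{mcidentity1trace}, which is $2\cdot 1 = 2$.

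The only non-formal step is the enumeration argument: one must verify that an embedded pair of pants in the once-punctured torus really does have the topological shape described above, and that the two companions $\gamma_1,\gamma_2$ are always geometrically distinct (so that the two-to-one correspondence never degenerates to a one-to-one one with double counting). The first point is the Euler-characteristic count mentioned above; the second follows because the two companions represent the two distinct non-trivial free-homotopy classes of figure-eight loops in the pair of pants obtained by cutting $T$ along $\gamma$ (they are the classes of $\gamma c^{-1}$ and $\gamma^{-1}c^{-1}$, where $c$ denotes the cusp boundary), and these differ in $\pi_1(T)$ whenever $\gamma$ is nontrivial. Once these two points are in hand, the identity \eqref{mc2} is immediate from \eqref{mcidentity1trace} and \eqref{trace2}.
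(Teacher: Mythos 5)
Your argument reproduces the paper's approach (the paper simply says ``combine \eqref{mcidentity1trace} with \eqref{trace2}''), and you usefully try to flesh out the enumeration step. Unfortunately that step is exactly where a genuine gap lies, and it is present both in your write-up and, implicitly, in the paper.

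You claim that the preimage of a simple closed geodesic $\gamma$ under your map $\delta \mapsto \gamma$ consists of exactly the two companions with $t_\delta = 3t_\gamma$. In fact there is a third preimage. Cutting $T$ along $\gamma$ gives an embedded pair of pants $\Pi$ with boundary components $c_1, c_2$ (both homotopic in $T$ to $\gamma$) and $c_3$ (the cusp), and $\Pi$ contains \emph{three} distinct geodesics with a single double point, one for each pair of boundary components: $c_1 c_3^{-1}$ and $c_2 c_3^{-1}$, which are the two companions with $|t| = 3t_\gamma$ in the homology classes $\pm[\gamma]$, and also $c_1 c_2^{-1}$, which has trace $t_\gamma^2 + 2$ and lies in the homology class $2[\gamma]$. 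This third curve is a closed geodesic of $T$ with exactly one double point, since $\Pi$ sits convexly in $T$ and interior geodesics of $\Pi$ are geodesics of $T$ with the same self-intersection number; on the modular torus ($t_\gamma = 3$) one checks directly that $ABAB^{-1}$ has trace $11$ while the two companions have trace $\pm 9$, so it is genuinely a different free homotopy class. Your appeal to Theorem \ref{torushomology} does not exclude it, because that theorem concerns only \emph{primitive} homology classes and $2[\gamma]$ is not primitive. Consequently, if the sum in \eqref{mc2} really runs over all geodesics with one double point, the left side exceeds $2$ by the strictly positive series
\[
\sum_{\gamma\ \text{simple}}\Bigl(1 - \sqrt{1 - \tfrac{36}{(t_\gamma^2+2)^2}}\Bigr).
\]
So either the theorem's sum must be restricted to the two cusp-encircling companions (equivalently, to geodesics with one double point in a primitive homology class), or the right-hand side must be corrected to include the contribution of the third family. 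Your proof, like the paper's one-line proof, establishes the restricted version, not the statement as literally written, and the justification you offer for the two-to-one count does not close this gap.
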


The story, however, only just begins (we will not be telling it in order). Greg McShane then proceeded to refine his result, as follows (see \cite{mcweiertorus}): The simple geodesics on a punctured torus are uniquely determined by their integral homology class (which is always primitive), and so fall into three families corresponding to the three possible $\mod 2$ homology classes $c_1 = (0, 1), c_2 = (1, 0), c_3 = (1, 1)$ -- the classes correspond to the fixed points of the elliptic involution on the punctured torus (Weierstrass points), but we will not be using this. McShane's result then is:
\begin{theorem}[McShane's Weierstrass point identity]
\label{mcweier}
\begin{equation}
\label{mcidentityw}
\sum_{\gamma \in c_i} \asin (\sech \frac{\ell(\gamma)}2) = \dfrac{\pi}{2},
\end{equation}
where the sum is taken over all simple closed geodesics int he class $c_i$ (where $i=1, 2, 3$).
\end{theorem}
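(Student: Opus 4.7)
The plan is to adapt McShane's original gap argument — which proves \eqref{mcidentity1} by decomposing a horocycle around the cusp — to the tangent circle at the Weierstrass point associated to $c_i$. Throughout, let $\iota \colon T \to T$ denote the hyperelliptic involution; its four fixed points are the puncture and the three Weierstrass points $W_1, W_2, W_3$. Under the natural bijection between nontrivial $\mod 2$ homology classes and Weierstrass points, I normalize conventions so that $\gamma \in c_i$ iff $\gamma$ avoids $W_i$ (by $\iota$-invariance every simple closed geodesic hits exactly two of the $W_j$). I then pass to the orbifold quotient $O = T/\iota$, topologically a disk with three order-two cone points and a cusp; each $\gamma \in c_i$ descends to a simple orbifold geodesic arc joining $W_j$ and $W_k$ with $\{i,j,k\} = \{1,2,3\}$.

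Next I would set up the gap decomposition at $W_i$. The tangent circle $U_i$ to $O$ at the cone point $W_i$ has total angular measure $\pi$ (the cone angle). Shoot the forward geodesic ray $g_\theta$ from $W_i$ in direction $\theta$; an orbifold version of the Birman--Series measure estimate should imply that for almost every $\theta$ the ray $g_\theta$ fails to be simple at some first moment, and that the short loop witnessing this first failure is freely homotopic to a unique simple closed geodesic $\gamma(\theta) \in c_i$. This partitions $U_i$ modulo a null set into open arcs $I_\gamma$ indexed by $\gamma \in c_i$.

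For each $\gamma \in c_i$ I would compute $|I_\gamma|$ by lifting to $\mathbb{H}^2$: the cone point $W_i$ lifts to a point $p$ and the relevant lift of $\gamma$ is a geodesic $L$ at distance $d = d(p, L)$. By Lobachevsky's parallel-angle formula, the directions at $p$ whose forward geodesic meets $L$ form an arc of angular width $2\asin(\sech d)$. Descending through the orbifold cover — which identifies antipodal directions at $p$ because $\iota$ acts as $-\mathrm{id}$ on the tangent space there — the corresponding arc in $U_i$ has width $\asin(\sech d)$. A direct hyperbolic-trigonometric calculation, using that $L$ is $\iota$-invariant so the foot of the perpendicular from $p$ to $L$ must sit at a lift of one of the Weierstrass points lying on $\gamma$, together with the fact that the two Weierstrass points of $\gamma$ divide it into two arcs of length $\ell(\gamma)/2$, should yield $d = \ell(\gamma)/2$. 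Summing over $\gamma \in c_i$ gives $\sum \asin(\sech(\ell(\gamma)/2)) = \pi/2$, as claimed in \eqref{mcidentityw}.

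The main obstacle is the geometric identification $d = \ell(\gamma)/2$, together with the careful bookkeeping of symmetry factors — the cone angle $\pi$ at $W_i$, the antipodal $\iota$-action on the tangent plane at $p$, and the factor of $2$ in Lobachevsky's formula — so that the right-hand side lands on $\pi/2$ rather than on $\pi$ or $\pi/4$. The Birman--Series null-set estimate in the orbifold setting is the other technical input but is a routine adaptation of the standard argument.
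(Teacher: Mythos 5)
The paper never proves Theorem~\ref{mcweier}: it is stated and attributed to McShane's paper \cite{mcweiertorus}, and is used only as input for the identities with self-intersections that follow. So there is no internal proof to compare against; you are reconstructing McShane's argument from scratch. Your general framework --- a gap decomposition on the tangent cone at the order-two cone point $W_i$ of the orbifold $T/\iota$, using the Birman--Series measure estimate and Lobachevsky's angle of parallelism --- is the right one, and several preliminary observations (the correspondence $c_i \leftrightarrow W_i$, that $\iota$ acts on $\gamma$ as a reflection splitting it into two arcs of length $\ell(\gamma)/2$, and that the foot of the perpendicular from $W_i$ to $\gamma$ is a Weierstrass point of $\gamma$) are correct, although your justification of the last one is garbled: the lift $\tilde\iota$ of $\iota$ that fixes the lift $p$ of $W_i$ cannot also preserve a lift $L$ of $\gamma$ (an order-two elliptic has a unique fixed point, so it would force $p\in L$). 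The correct reason is that the seam of the pair of pants $T\smallsetminus\gamma$ is canonical, hence $\iota$-invariant, hence its endpoint on $\gamma$ is $\iota$-fixed.

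The step that is supposed to close the argument is, however, wrong, and not by a recoverable constant. In the pair of pants $P = T\smallsetminus\gamma$ with two boundary circles of length $\ell(\gamma)$ and one cusp, the point $W_i$ is the midpoint of the seam between the two boundary components, and the degenerate right-angled hexagon relation gives
\[
\sinh d \cdot \sinh\tfrac{\ell(\gamma)}{2} = 1, \qquad d := d(W_i,\gamma),
\]
so $d \neq \ell(\gamma)/2$ (numerically, in the maximally symmetric case $t_\gamma=3$, $d\approx 0.805$ while $\ell(\gamma)/2\approx 0.962$). Your heuristic ``the two Weierstrass points of $\gamma$ divide it into arcs of length $\ell(\gamma)/2$, hence $d=\ell(\gamma)/2$'' is a non sequitur: those arc lengths say nothing about the distance from $W_i$. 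Substituting the correct $d$ into Lobachevsky's formula yields $\Pi(d)=\asin(\sech d)=\asin\bigl(\tanh\tfrac{\ell(\gamma)}{2}\bigr)$, which is the \emph{complement} inside $\pi/2$ of the summand $\asin\bigl(\sech\tfrac{\ell(\gamma)}{2}\bigr)$ appearing in \eqref{mcidentityw}. So the gap arc $I_\gamma$ cannot be the shadow of the nearest lift of $\gamma$; it must be the complementary arc bounded by the directions that spiral onto $\gamma$. Your orbifold bookkeeping has a matching error: the two antipodal shadows of $L$ and $\tilde\iota(L)$ descend under the antipodal identification to a single arc in $U_i$ of the \emph{same} width $2\Pi(d)$, not half of it, and even on your own accounting the claimed gap widths sum to $\pi/2$ inside a circle $U_i$ of total measure $\pi$, leaving half the circle unexplained. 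With the correct distance and with ``shadow'' replaced by ``complement of the two shadows,'' each $\gamma$ contributes $2\bigl(\pi - 2\Pi(d)\bigr) = 4\asin\bigl(\sech\tfrac{\ell(\gamma)}{2}\bigr)$ out of $2\pi$ on the unit tangent circle of $T$ at $W_i$, and the identity follows; as written, the central geometric computation fails.
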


we will restate Identity \eqref{mcidentityw} as:
\begin{equation}
\label{mcidentitywtr}
\sum_{\gamma \in c_i} \asin \frac{2}{t_\gamma} = \dfrac{\pi}{2}.
\end{equation}
Now, Eq. \ref{trace2}, Identity \ref{mcidentitywtr}, and Theorem \ref{torushomology} immediately imply:
\begin{theorem}
\label{mcidentityw1int}
On a torus with one puncture equipped with a complete hyperbolic metric of finite area, the geodesics with one self-intersection fall into the three $\mod 2$ homology classes $c_1, c_2, c_3,$ and furthermore,
\begin{equation}
\label{weier1int}
\sum_{\gamma \in c_i} \asin \frac6{t_\gamma} = \pi,
\end{equation}
where the sum is taken over all the geodesics with one self-intersection lying in the homology class $c_i$ (where $i=1, 2, 3$).
Furthermore, 
\begin{equation}
\label{weiera1intall}
\sum_\gamma \asin \frac6{t_\gamma} = 3 \pi,
\end{equation}
where the sum is taken over \emph{all} geodesics with one self-intersection.
\end{theorem}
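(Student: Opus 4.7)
The plan is to bootstrap McShane's Weierstrass-point identity \eqref{mcidentitywtr} to one-self-intersection geodesics, using the two-to-one correspondence and trace relation already established in Section \ref{ptorus}. First I would invoke Theorem \ref{torushomology}: each primitive integral homology class on the punctured torus supports exactly one simple closed geodesic $\gamma$ and exactly two one-self-intersection geodesics $\gamma_1, \gamma_2$, and these are all homologous to $\gamma$. In particular, $\gamma_1$ and $\gamma_2$ lie in the same $\mod 2$ class $c_i$ as $\gamma$, so in each $c_i$ the set of one-self-intersection geodesics is in bijection, two-to-one, with the set of simple closed geodesics.

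Second, Eq.~\eqref{trace2} gives $t_{\gamma_j} = 3\, t_\gamma$ for $j = 1, 2$, and hence $\asin(6/t_{\gamma_j}) = \asin(2/t_\gamma)$. Summing over all simple $\gamma \in c_i$, with each such $\gamma$ contributing the same value twice on the one-self-intersection side, and applying \eqref{mcidentitywtr} yields
\[
\sum_{\gamma' \in c_i} \asin \frac{6}{t_{\gamma'}} \;=\; 2 \sum_{\gamma \in c_i,\ \text{simple}} \asin \frac{2}{t_\gamma} \;=\; 2 \cdot \frac{\pi}{2} \;=\; \pi,
\]
which is Identity \eqref{weier1int}. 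Summing the three resulting identities over $i = 1, 2, 3$ and using that every one-self-intersection geodesic lies in exactly one of the three $\mod 2$ classes gives Identity \eqref{weiera1intall}.

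The only point I would want to verify carefully is that the correspondence is genuinely a bijection and not merely a surjection — that is, that Theorem \ref{torushomology} really asserts \emph{exactly} two one-self-intersection geodesics per primitive class, so that the cutting construction of Section \ref{ptorus} accounts for \emph{all} such geodesics. Once this bookkeeping is in hand, there is no real obstacle: the argument is a pure substitution of the trace identity $t_{\gamma'} = 3 t_\gamma$ into an already known summation identity, together with the combinatorial factor $2$ coming from the double cover of simple geodesics by one-self-intersection ones.
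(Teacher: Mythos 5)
Your proof is correct and matches the paper's intended argument exactly: the paper offers no written proof beyond asserting that the result follows ``immediately'' from Eq.~\eqref{trace2}, Identity~\eqref{mcidentitywtr}, and Theorem~\ref{torushomology}, and your two-to-one correspondence combined with the substitution $t_{\gamma_j} = 3 t_\gamma$ (so that $\asin(6/t_{\gamma_j}) = \asin(2/t_\gamma)$) is precisely the bookkeeping that makes it immediate. The caveat you raise about ``exactly two'' is the content of Theorem~\ref{torushomology}, which the paper does assert in that form, so the argument is complete.
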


What happens if the surface is still a torus, but instead of a cusp it now possesses a boundary component of length $\delta$ or  \emph{cone angle} $\theta.$ This case was also considered by G. McShane, but, to my knowledge, first appeared in the paper \cite{tanjdg}. We will summarize all the results in the two theorems below:
\begin{theorem}[McShane's "classic" identity generalized]
\label{classicgen}
Let $T$ be a perforated torus equipped with a hyperbolic metric with one geodesic boundary component $\delta.$ Then
\begin{equation}
\label{macgenbdry}
\sum_\gamma 2 \arctanh 
\left( \frac {\sinh \frac{\ell(\delta)}2}{\cosh \frac{\ell(\delta)}2 + \exp \ell(\gamma)}\right) = \dfrac{\ell(\delta)}2,
\end{equation}
where the sum is taken over all simple closed geodesics on $T.$
If, instead, $T$ is a torus equipped with a hyperbolic metric with one cone point, with cone angle equal to $\theta,$ then 
\begin{equation}
\label{macgencone}
\sum_\gamma 2 \arctan \left( \frac {\sin \frac\theta2}{\cos \frac\theta2 + \exp \ell(\gamma)}\right) = \dfrac\theta2,
\end{equation}
Where the sum is, again, taken over all simple closed geodesics on $T.$
\end{theorem}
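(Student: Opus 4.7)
The plan is to adapt McShane's original ``gap-measure'' argument, replacing the horocycle around the cusp with the boundary geodesic $\delta$ (or, in the cone-point case, a small geodesic circle around the cone point). At each point $x \in \delta$, emit the unit-speed geodesic ray $\rho_x$ perpendicular to $\delta$ into $T$, and partition $\delta$ according to the behaviour of $\rho_x$. The set of $x$ for which $\rho_x$ is a complete embedded simple ray has Lebesgue measure zero, by a standard Birman--Series type argument. Its complement is an open set of full measure, hence a countable disjoint union of open arcs.

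Next I would show that each such arc is naturally labelled by a simple closed geodesic $\gamma$: cutting $T$ along $\gamma$ produces a pair of pants $P_\gamma$ with cuffs $\delta, \gamma_+, \gamma_-$, and for $x$ in the arc $\rho_x$ enters $P_\gamma$, reflects off one of $\gamma_\pm$, and returns to $\delta$. The involution of $P_\gamma$ exchanging $\gamma_+$ and $\gamma_-$ produces two arcs of equal length $|I_\gamma|$ for each $\gamma$, whence
\[
\ell(\delta) = \sum_\gamma 2\,|I_\gamma|,
\]
which is precisely the symmetric doubling that produces the factor of $\frac12$ on the right-hand side of \eqref{macgenbdry}.

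The core computation is to evaluate $|I_\gamma|$. Using the right-angled hexagon associated to $P_\gamma$ (cuffs $\delta, \gamma, \gamma$) and the standard distance formulae for the common perpendiculars between adjacent cuffs, a short hyperbolic-trigonometric manipulation yields
\[
|I_\gamma| = 2\arctanh\!\left(\frac{\sinh(\ell(\delta)/2)}{\cosh(\ell(\delta)/2) + \exp \ell(\gamma)}\right),
\]
and this assembles into \eqref{macgenbdry}. As a sanity check, letting $\ell(\delta) \to 0$ (using $\sinh(\ell(\delta)/2)\sim \ell(\delta)/2$ and $\cosh(\ell(\delta)/2)\to 1$) recovers McShane's original identity \eqref{mcidentity1}.

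For the cone-point version, $\delta$ is replaced by the degenerate ``circle'' at the cone point, and the perpendiculars by geodesic rays from the cone point parametrized by the angle they make there (total angle $\theta$). The measure-zero classification applies essentially verbatim, while the right-angled hexagon degenerates to a hyperbolic pentagon with one vertex of angle $\theta/2$; this converts $\sinh$ and $\cosh$ of $\ell(\delta)/2$ into $\sin$ and $\cos$ of $\theta/2$, producing \eqref{macgencone}. The main obstacle is the trigonometric manipulation that produces the exact closed form for $|I_\gamma|$, together with the careful orientation-tracking needed to justify the factor of $2$ in the symmetric doubling; the topological-dynamical classification of gap intervals is robust and follows McShane's original almost line-by-line.
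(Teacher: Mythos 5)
The paper does not prove Theorem \ref{classicgen}: it is stated as a summary of known results, with attribution to G.~McShane and to \cite{tanjdg}, so there is no proof of record here for your attempt to be measured against. Your sketch is the standard McShane gap-measure argument found in those sources, and its skeleton is correct: emit perpendicular rays from the boundary, invoke a Birman--Series measure-zero statement for the exceptional set, classify the complementary open arcs by simple closed geodesics, and compute each arc length via the right-angled hexagon in the pair of pants $P_\gamma$.

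Two cautions. The phrase ``$\rho_x$ enters $P_\gamma$, reflects off one of $\gamma_\pm$, and returns to $\delta$'' mischaracterizes the dynamics; in McShane's classification the gap arc associated with $\gamma$ consists of those $x$ whose perpendicular ray, up to its first self-intersection or first return to $\delta$, remains inside $P_\gamma$ without crossing $\gamma$ --- no reflection is involved, and this is precisely why the arc has a clean trigonometric description. Second, the two steps you explicitly defer (the hexagon/pentagon trigonometry producing the closed form for $|I_\gamma|$, and the orientation bookkeeping behind the factor of two coming from the hyperelliptic involution) are exactly where the substance of the proof lies; as written the sketch asserts rather than establishes the formula for $|I_\gamma|$. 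Your limiting check as $\ell(\delta)\to 0$ is a useful sanity test and does recover \eqref{mcidentity1}.
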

Notice that the classic McShane's identity is recovered from the first statement as the length goes to $0,$ or from the second as the angle goes to $0.$
\begin{theorem}
\label{weiergen}
Let $T$ be a perforated torus equipped with a hyperbolic metric with one geodesic boundary component $\delta.$ Then
\begin{equation}
\label{weiergenbdry}
\sum_{\gamma \in c_i} \arctan \frac{\cosh \frac {\ell(\delta) } 4}{\sinh \frac{\ell(\gamma)} 2} = \frac \pi 2,
\end{equation}
where the sum is taken over all the geodesics lying in a fixed \emph{nontrivial} $\mod 2$ homology class.

If $T$ is a torus equipped with a hyperbolic metric with one cone point of angle $\theta,$ then
\begin{equation}
\label{weiergencone}
\sum_{\gamma \in c_i} \arctan \frac{\cos \frac \theta 4}{\sinh \frac{\ell(\gamma)}2} = \frac \pi 2,
\end{equation}
with the sum, again, taken over all geodesics lying in a fixed nontrivial $\mod 2$ homology class.
\end{theorem}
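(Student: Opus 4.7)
My plan is to prove both identities in Theorem~\ref{weiergen} by the gap-decomposition method of McShane and Bowditch, transposed from the horocycle at the cusp (as in the classical McShane identity and in the proofs of Theorem~\ref{classicgen}) to the tangent circle at a Weierstrass point, thereby adapting the proof of Theorem~\ref{mcweier} to the bordered and cone-angle settings, following the framework of Tan \cite{tanjdg}.

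First, for each nontrivial $\mod 2$ class $c_i$ there is an order-two orientation-preserving isometry $\iota_i$ of $T$ (a Weierstrass involution) with a fixed point $w_i$ in the interior. Every simple closed geodesic $\gamma \in c_i$ is $\iota_i$-invariant and meets $w_i$ orthogonally, splitting $\gamma$ into two arcs of length $\ell(\gamma)/2$. I would pass to the orbifold quotient $T/\iota_i$: this carries an order-$2$ cone point at the image of $w_i$, and inherits either a boundary component of length $\ell(\delta)/2$ (in the bordered case, since $\delta$ is $\iota_i$-invariant) or a cone point of angle $\theta/2$ (in the cone case).

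Next, I would decompose the unit tangent circle at $w_i$ in the orbifold (of total angular measure $\pi$ by virtue of the order-$2$ cone point) into disjoint open gaps indexed by simple closed geodesics $\gamma \in c_i$: the gap associated with $\gamma$ consists of those directions at $w_i$ whose outgoing geodesic ray first exits the orbifold through $\delta$ (respectively, spirals into the cone point) in the homotopy class determined by $\gamma$. By a Birman--Series type argument, the complement of the union of gaps has zero angular measure. A hyperbolic trigonometry computation in the right-angled half-pants bounded by the two halves of $\gamma$, half of $\delta$, and the perpendicular arc from $w_i$ to $\delta$ shows the angular width of the $\gamma$-gap to be
\[
2\arctan\frac{\cosh\frac{\ell(\delta)}{4}}{\sinh\frac{\ell(\gamma)}{2}}.
\]
Summing over $\gamma \in c_i$ and equating to the tangent angle $\pi$ at the order-$2$ cone point yields Eq.~\eqref{weiergenbdry}. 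Equation \eqref{weiergencone} then follows either by the identical argument with the cone half-angle $\theta/2$ in place of the half-boundary $\ell(\delta)/2$ (now using spherical rather than hyperbolic trigonometry in the ``half-pants"), or, more slickly, by analytic continuation $\ell(\delta) \mapsto i\theta$: this substitution turns $\cosh(\ell(\delta)/4)$ into $\cos(\theta/4)$ while leaving $\sinh(\ell(\gamma)/2)$ unchanged, and both sides of the identity extend real-analytically in the deformation parameter, so agreement on the bordered locus forces agreement on the cone locus.

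The main obstacle will be verifying that the set of directions at $w_i$ not contained in any gap has angular measure zero. This requires a Birman--Series statement adapted to perforated tori with variable boundary length or cone angle, together with the identification that every such exceptional direction is a limit of endpoints of gaps coming from simple closed geodesics in the specific class $c_i$ (and not the other two classes). A secondary, more bookkeeping, obstacle is the trigonometric computation in the cone-angle case: as $\theta \to 2\pi$ the relevant ``half-pants" degenerates and the hexagon/pentagon identities must be replaced by their spherical analogs, which requires some care to state uniformly, though it can largely be avoided by the analytic-continuation route indicated above.
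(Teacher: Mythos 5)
The paper does not prove Theorem~\ref{weiergen}; it introduces it explicitly as a quotation of results of McShane and of Tan and collaborators (``This case was also considered by G.~McShane, but, to my knowledge, first appeared in the paper \cite{tanjdg}. We will summarize all the results in the two theorems below''), with no argument given. You are therefore reconstructing the cited proof rather than matching anything internal to the paper.

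Your overall strategy---a gap decomposition of the tangent circle at a Weierstrass point, a Birman--Series zero-measure statement for the exceptional directions, and a right-angled pentagon/hexagon computation for the gap widths---is the correct one and is exactly the one used in the literature you cite (McShane's Weierstrass-point paper as generalized by Tan--Wong--Zhang). Your gap-width formula also passes the sanity check that $\ell(\delta)\to 0$ gives $\arctan\bigl(1/\sinh(\ell(\gamma)/2)\bigr)=\asin\sech(\ell(\gamma)/2)$, recovering Theorem~\ref{mcweier}. However, one structural point is reversed. The once-punctured torus carries a \emph{single} hyperelliptic involution $\iota$ (not three involutions $\iota_i$), fixing all three interior Weierstrass points $w_1,w_2,w_3$. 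Every simple closed geodesic is $\iota$-invariant and passes through exactly \emph{two} of the three Weierstrass points; the bijection between nontrivial $\bmod\,2$ classes and Weierstrass points is that $\gamma\in c_i$ is the class of geodesics that \emph{avoid} $w_i$ (equivalently, the image arc in the quotient orbifold joins the two cone points other than $\bar w_i$). Consequently, the geodesics contributing gaps at $\bar w_i$ are those disjoint from $w_i$---a ray from $\bar w_i$ spirals into such a $\bar\gamma$---not those passing through $w_i$, and this is what makes the gap count a single-class sum. With that correction your decomposition gives the stated identity. The analytic continuation $\ell(\delta)\mapsto i\theta$ is a legitimate shortcut to the cone-point case, though it silently uses that both sides vary real-analytically on a connected deformation space containing the bordered and cone loci and that the series converges locally uniformly; this is established in \cite{tanjdg}.
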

In order to adapt Theorems \ref{classicgen} and \ref{weiergen} to our situation of curves with one self intersection, we recall Formulas \eqref{trace1} and \eqref{trace2}. The latter, in the more general case considered here will have the form
\begin{equation}
\label{trace2nocusp}
\tr B = (\tr C +1) \tr A,
\end{equation}
where $C$ is the isometry corresponding to the kind of boundary we are having. If $C$ is a geodesic boundary component $\delta,$ then $\tr C = 2 \cosh \ell(\delta)/2,$ while if $C$ is a cone point of angle $\theta,$  then $\tr C = 2 \cos \theta/2 + 1.$ As before, to each simple closed curve $\gamma$ there correspond \emph{two} curves with a single self intersection (both of the same length $\ell,$ satisfying $\tr B = 2 \cosh \ell/2.$)

In addition, we have, from the usual trigonometric formulas:
\begin{gather*}
\cos \frac \theta 4 =
 \sqrt{\frac{1+\cos \theta}2}= 
\sqrt{\frac{1+ \frac12 \tr C}2}=\frac12 \sqrt{2+\tr C}
\\
\cosh \frac l 4 =
 \sqrt{\frac{1+\cosh l}2}= 
\sqrt{\frac{1+ \frac12 \tr C}2}=\frac12 \sqrt{2+\tr C}
\\
\sin \theta/2 = \sqrt{1-\cos^2\theta/2} = \frac12 \sqrt{4-\tr^2C}
\\
\sinh l/2 = \sqrt{\cosh^2 l/2 - 1} = \frac12 \sqrt{\tr^2 C - 4}
\\
\exp l = \left(\cosh l/2 + \sinh l/2\right)^2 = 
 \frac12 \tr^2 C - 1 + \frac 12 \tr C 
 \sqrt{\tr^2 C - 4}
\end{gather*}
Using this, we can rewrite Theorems \ref{classicgen} and \ref{weiergen} as follows:
\begin{theorem}[McShane's "classic" identity generalized (one self-intersection)]
\label{classicgen1int}
Let $T$ be a perforated torus equipped with a hyperbolic metric with one geodesic boundary component $\delta.$ Then
\begin{equation}
\label{macgenbdry1int}
\sum_\gamma \arctanh \left( \frac{(t_\delta + 1)\sqrt{t^2_\delta - 4}}{(t_\delta -2)(t_\delta + 1) + t_\gamma + t_\gamma \sqrt{t^2_\gamma - 4(t_\delta +1)^2} } \right)= \dfrac{\ell(\delta)}2,
%\sum_\gamma 2 \arctanh 
%\left( \frac {\sinh \frac{\ell(\delta)}2}{\cosh \frac{\ell(\delta)}2 + \exp \ell(\gamma)}\right) = \dfrac{\ell(\delta)}2,
\end{equation}
where the sum is taken over all simple closed geodesics on $T.$ 
%$\Delta$ is the (hyperbolic) isometry corresponding to the boundary component $\delta$ and $\Gamma$ is the hyperbolic isometry corresponding to the curve $\gamma.$
If, instead, $T$ is a torus equipped with a hyperbolic metric with one cone point, with cone angle equal to $\theta,$ then 
\begin{equation}
\label{macgencone1int}
\sum_\gamma \arctan \left( \frac{(t_ \delta + 1)\sqrt{-t^2 \delta +4}}{(t \delta -2)(t_\delta + 1) + t^2_\gamma + t_\gamma \sqrt{t^2_\gamma - 4(t_\delta +1)^2} } \right)= \dfrac{\theta}2,
%\sum_\gamma 2 \arctan \left( \frac {\sin \frac\theta2}{\cos \frac\theta2 + \exp \ell(\gamma)}\right) = \dfrac\theta2,
\end{equation}
Where the sum is, again, taken over all simple closed geodesics on $T,$ the symbol $\Delta$ denotes the (elliptic) isometry corresponding to the cone point.\end{theorem}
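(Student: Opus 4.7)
The plan is to deduce both identities from the corresponding statements in Theorem \ref{classicgen} by a direct algebraic substitution, using the $2$-to-$1$ correspondence between simple closed geodesics on $T$ and closed geodesics on $T$ with one self-intersection (as in Section \ref{ptorus}) together with the trace relation \eqref{trace2nocusp}. The key point is that no new geometric input is needed: Theorem \ref{classicgen} already sums over simple geodesics, and the preceding discussion (the list of trigonometric formulas expressing $\cosh \ell/2$, $\sinh \ell/2$, $\exp \ell$ in terms of the relevant trace) tells us exactly how to rewrite that sum in the new variables $t_\gamma$ and $t_\delta$.

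Concretely, fix a simple closed geodesic $\gamma_0$ on $T$ with hyperbolic translation $A$, and let $\gamma$ be one of the two self-intersecting geodesics of common length associated to $\gamma_0$ by cutting along $\gamma_0$ and applying \eqref{trace1}. Writing $C$ for the boundary isometry (with $t_C = t_\delta = 2\cosh\ell(\delta)/2$ in the geodesic-boundary case, and $t_C = 2\cos\theta/2 + 1$ in the cone-point case, as noted after \eqref{trace2nocusp}), the relation \eqref{trace2nocusp} gives $t_\gamma = (t_\delta + 1)\,t_{\gamma_0}$, so that
\[
\cosh\tfrac{\ell(\gamma_0)}{2} = \tfrac{t_\gamma}{2(t_\delta+1)}, \qquad \sinh\tfrac{\ell(\gamma_0)}{2} = \tfrac{1}{2(t_\delta+1)}\sqrt{t_\gamma^2 - 4(t_\delta+1)^2},
\]
and hence $\exp\ell(\gamma_0)$ is an explicit rational combination of $t_\gamma$, $t_\delta$, and $\sqrt{t_\gamma^2 - 4(t_\delta+1)^2}$. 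Inserting this together with $\sinh\ell(\delta)/2 = \tfrac12\sqrt{t_\delta^2 - 4}$ and $\cosh\ell(\delta)/2 = \tfrac{t_\delta}{2}$ into the summand of \eqref{macgenbdry}, and clearing denominators inside the $\arctanh$, rearranges the summand into the form displayed in \eqref{macgenbdry1int}.

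Finally, I reindex the sum. Because each simple $\gamma_0$ yields \emph{two} self-intersecting geodesics of the same length, the sum over self-intersecting $\gamma$ counts every simple contribution twice, which exactly absorbs the factor of $2$ that appears outside $\arctanh$ in \eqref{macgenbdry}. This yields \eqref{macgenbdry1int}. For the cone-point identity \eqref{macgencone1int} the argument is identical: start from \eqref{macgencone}, use $t_C = 2\cos\theta/2 + 1$ in \eqref{trace2nocusp}, and substitute via the half-angle identities for $\sin\theta/2$ and $\cos\theta/2$ from the list preceding the theorem, again halving the outside factor of $2$.

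The only real obstacle is bookkeeping in the algebraic simplification of the argument of $\arctanh$ (resp.\ $\arctan$): one must track the branch of the square root and verify that the denominator $\cosh\ell(\delta)/2 + \exp\ell(\gamma_0)$, expressed in the new variables, collapses exactly to the quantity displayed in the theorem. I would carry out this computation in the boundary case first, then observe that the cone-point case is obtained formally by replacing $\cosh \to \cos$, $\sinh \to i\sin$ in the relevant places, which is why both statements have essentially the same shape.
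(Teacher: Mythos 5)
Your approach is exactly the one the paper takes: substitute the trace relation \eqref{trace2nocusp} and the listed half-angle/trace identities into the summands of Theorem \ref{classicgen}, using the two-to-one correspondence between simple geodesics and self-intersecting ones to account for the factor of $2$ outside $\arctanh$ (resp.\ $\arctan$). In fact the paper gives no explicit proof at all -- it simply states the result after listing the substitution formulas -- so your sketch is, if anything, more detailed than the original treatment. One caveat worth flagging: the printed formulas \eqref{macgenbdry1int} and \eqref{macgencone1int} are not internally consistent (one has $t_\gamma$ and the other $t_\gamma^2$ in the corresponding denominator slot, and there appears to be a dropped factor of $t_\delta+1$), so when you say the substitution ``yields \eqref{macgenbdry1int}'' you should really say it yields the correct expression, of which the paper's displayed formula is an imperfect transcription; carrying out the bookkeeping carefully, as you propose, is precisely how one would detect and fix those typos.
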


\begin{theorem}[McShane's Weierstrass point identity generalized for curves with one self-intersection]
\label{weiergen1int}
Let $T$ be a perforated torus equipped with a hyperbolic metric with one geodesic boundary component $\delta.$ Let $\Delta$ be the corresponding matrix in $\SL(2, \mathbb R).$ Then
\begin{equation}
\label{weiergenbdry1int}
\sum_{\gamma \in c_i} \arctan \frac{(t_\delta +1) \sqrt{2 + t_\delta}}{\sqrt{t_\gamma - 4 (t_\delta + 1)^2}} = \pi
%\frac{\cosh \frac {\ell(\delta) } 4}{\sinh \frac{\ell(\gamma)} 2} = \pi,
\end{equation}
where the sum is taken over all the geodesics lying in a fixed \emph{nontrivial} $\mod 2$ homology class.

If $T$ is a torus equipped with a hyperbolic metric with one cone point of angle $\theta,$ and $\Theta$ is the corresponding (elliptic) element in $\SL(2, \mathbb R),$ then
\begin{equation}
\label{weiergencone1int}
\sum_{\gamma \in c_i} \arctan \frac{(t_\delta +1) \sqrt{2 + t_\delta}}{\sqrt{-t^2_\gamma +4 (t_\delta + 1)^2}} = \pi
%\sum_{\gamma \in c_i} \arctan \frac{\cos \frac \theta 4}{\sinh \frac{\ell(\gamma)}2} = \pi,
\end{equation}
with the sum, again, taken over all geodesics lying in a fixed nontrivial $\mod 2$ homology class.
\end{theorem}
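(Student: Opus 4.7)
The plan is to derive Theorem \ref{weiergen1int} from the already-established Theorem \ref{weiergen} by exploiting the two-to-one correspondence between simple closed geodesics and one-self-intersection geodesics in a given mod $2$ homology class, exactly as Theorem \ref{mcidentityw1int} was derived from Theorem \ref{mcweier}. The only new ingredient over the cusped case is that the trace-doubling factor $3$ in \eqref{trace2} must be replaced by $t_\delta+1$ (respectively $t_\Theta + 1$) from \eqref{trace2nocusp}, and the simple-geodesic length $\ell(\gamma)/2$ must be eliminated in favor of the trace $t_\gamma$ of the self-intersecting curve.

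Concretely, for the boundary case I would first note that the analog of Theorem \ref{torushomology} goes through unchanged on a torus with one geodesic boundary: cutting along a simple closed geodesic $\alpha$ and resolving either self-intersection in the corresponding figure-eight gives two geodesics $\beta_1, \beta_2$ of equal length, lying in the same primitive homology class as $\alpha$, with trace $t_{\beta_j} = (t_\delta+1)\,t_\alpha$ by \eqref{trace2nocusp}. Hence the sum over $\gamma \in c_i$ of one-self-intersection geodesics equals twice the sum over the corresponding simple geodesics in the same class, so the right-hand side of \eqref{weiergenbdry} doubles from $\pi/2$ to $\pi$. To rewrite the argument of $\arctan$, I would use the trigonometric identities listed just before the theorem: $\cosh(\ell(\delta)/4) = \tfrac12\sqrt{2+t_\delta}$, and $\sinh(\ell(\alpha)/2) = \tfrac12\sqrt{t_\alpha^2 - 4}$. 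Substituting $t_\alpha = t_\gamma/(t_\delta+1)$ gives
\[
\sinh\frac{\ell(\alpha)}{2} \;=\; \frac{1}{2(t_\delta+1)}\sqrt{t_\gamma^{2} - 4(t_\delta+1)^2},
\]
so that
\[
\frac{\cosh(\ell(\delta)/4)}{\sinh(\ell(\alpha)/2)} \;=\; \frac{(t_\delta+1)\sqrt{2+t_\delta}}{\sqrt{t_\gamma^{2} - 4(t_\delta+1)^2}},
\]
which is exactly the summand in \eqref{weiergenbdry1int} (read with $t_\gamma^{2}$ in place of the evident typographical $t_\gamma$ in the displayed statement).

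For the cone-point case the argument is formally identical: by \eqref{trace2nocusp} with $C = \Theta$ we have $t_\gamma = (t_\Theta+1)\,t_\alpha$, and the identity $\cos(\theta/4) = \tfrac12\sqrt{2+t_\Theta}$ lets us repeat the substitution verbatim. The sign change inside the square root (the $-t_\gamma^2 + 4(t_\delta+1)^2$ in \eqref{weiergencone1int}) appears because in the elliptic case $t_\Theta < 2$, so the quantity $t_\alpha^2 - 4$ from $\sinh(\ell(\alpha)/2)$ of the \emph{simple} geodesic is still positive, but after clearing denominators one must track where the absolute value lands; I would verify this carefully by going back through the derivation of \eqref{weiergencone} from the trigonometric dictionary rather than trusting blind substitution.

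The main obstacle, and the only thing that is not pure bookkeeping, is the homological bookkeeping: one has to check that the two one-self-intersection resolutions $\beta_1, \beta_2$ of each simple $\alpha$ lie in the \emph{same} non-trivial mod $2$ class as $\alpha$, so that summing the doubled contributions over a fixed $c_i$ on the self-intersecting side matches summing over $c_i$ on the simple side. This follows since $\beta_j$ is freely homotopic to a commutator-type word built from the two simple loops of the figure-eight representative of $\alpha$, hence $[\beta_j] = \pm[\alpha] \in H_1(T;\integers)$, and the mod $2$ reduction agrees; modulo this verification the theorem reduces to the substitutions above.
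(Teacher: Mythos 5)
Your proposal is correct and follows essentially the same route the paper takes: the paper "proves" Theorem~\ref{weiergen1int} by observing (just before the statement) that each simple geodesic $\alpha$ corresponds to two equal-length one-self-intersection geodesics with trace $(t_\delta+1)t_\alpha$ via Eq.~\eqref{trace2nocusp}, and then substituting via the displayed trigonometric dictionary into Theorem~\ref{weiergen}, doubling the right-hand side from $\pi/2$ to $\pi$. You are also right to flag the two misprints in the displayed formulas: the denominator of \eqref{weiergenbdry1int} should read $\sqrt{t_\gamma^2 - 4(t_\delta+1)^2}$, and in \eqref{weiergencone1int} the sign under the radical is also wrong, since $t_\gamma = (t_\Theta+1)t_\alpha$ with $t_\alpha > 2$ forces $t_\gamma^2 > 4(t_\Theta+1)^2$, so the radicand must be $t_\gamma^2 - 4(t_\Theta+1)^2$ there as well (and $t_\delta$ should be $t_\Theta$).
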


\subsection{Punctured torus bundles}
In his paper \cite{bhbundles} Brian Bowditch proved the following result.
\begin{theorem}
\label{bowditch1}
Let $M$ be a hyperbolic $3$-manifold which fibers over the circle $S^1$ with fiber a once-punctured torus. Then, 
\[
\sum_{\gamma \in S} \dfrac1{1+\exp(\ell(\gamma))} = 0,
\]
where the set $S$ is the set of all geodesics in $M$ which correspond to simple closed curves in the fiber, and $\ell(\gamma)$ is the \emph{complex length} of $\gamma.$
\end{theorem}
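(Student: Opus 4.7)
The plan is to derive the identity from a complex-valued version of McShane's identity for pleated surfaces, combined with the monodromy symmetry imposed by the fibration. The key input should be Bowditch's earlier identity for quasi-Fuchsian once-punctured torus groups, which states that
\[
\sum 1/(1+\exp(\ell(\gamma))) = 1/2
\]
remains valid for any quasi-Fuchsian representation $\pi_1(T) \to \PSL(2,\mathbb{C})$ provided $\ell$ is read as complex translation length, with the sum taken over simple closed curves in the fiber. A hyperbolic once-punctured torus bundle sits in the closure of the quasi-Fuchsian locus (by Thurston's double limit theorem), so the strategy is to transfer the identity to this limit while carefully handling the identifications forced by the monodromy.

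First, I would realize the fiber $F \subset M$ as a pleated surface $f\colon F \to M$ bent along the attracting lamination $\lambda^+$ of the monodromy $\phi$, and simultaneously as a second pleated surface $f'$ bent along the repelling lamination $\lambda^-$. A Bowditch-type identity holds for each pleated realization; when the appropriate combination of the two identities is taken, the bending corrections to the real lengths cancel and one is left with a sum of $1/(1+\exp(\ell(\gamma)))$ in which $\ell$ is the honest complex length of $\gamma$ in $M$. The two constant terms $1/2$ from the pleated identities combine to $0$ on the right-hand side.

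Next, I would reorganize the sum using the pseudo-Anosov monodromy. Since $\phi$ acts on simple closed curves of $F$ with infinite cyclic orbits, and since $\gamma$ and $\phi(\gamma)$ are freely homotopic in $M$ (and so yield the same geodesic in $S$ with the same complex length), the sum over all simple closed curves on $F$ decomposes as a sum over orbits indexed by $S$ times an inner sum over $\mathbb{Z}$. The orbit structure then explains why the resulting identity on $S$ inherits the constant $0$: the classical $1/2$ of the original McShane identity is smeared across an infinite orbit and vanishes from the normalized sum, leaving only the bending-free complex-length contribution.

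The main obstacle will be controlling convergence and tracking signs. One needs $\mathrm{Re}(\ell(\gamma))$ to grow at least logarithmically in the geometric complexity of $\gamma$ on the fiber — this should follow from pseudo-Anosov dynamics, since curves of high complexity on $F$ become very long geodesics in $M$ — and one must justify absolute convergence of the series in the statement. The delicate part is showing that the pleated-surface identities for $f$ and $f'$ combine to give $1/2 - 1/2 = 0$ on the right rather than $1/2 + 1/2 = 1$; this is a question of correctly identifying the orientation of the bending for each pleating, and I expect it to be the subtlest point of the argument.
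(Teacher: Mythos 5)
The paper does not prove this theorem; it is stated as a citation of Bowditch's result on once-punctured torus bundles \cite{bhbundles}, where it is established by extending Bowditch's Markoff-triple / Farey-tree combinatorial approach to McShane's identity, not by pleated-surface arguments. So you are attempting a new proof rather than reconstructing the paper's, and the attempt as sketched has genuine gaps.

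The central gap is the passage from the sum over all simple closed curves on the fiber to the sum over $\phi$-orbits. You observe that $\gamma$ and $\phi^n(\gamma)$ have the same complex length in $M$ and propose to decompose the sum over all scc's on $F$ as a sum over orbits in $S$ times an inner sum over $\mathbb{Z}$, with the quasi-Fuchsian constant $1/2$ ``smeared across an infinite orbit.'' But since the terms in each orbit are literally equal, the inner sum over $\mathbb{Z}$ yields $\pm\infty$ or $0$; the full sum over scc's on $F$ cannot converge to $1/2$ unless every orbit sum is $0$, and then the full sum is the indeterminate form $\infty\cdot 0$ and carries no information. There is no normalization that recovers the orbit identity from the quasi-Fuchsian one by a limiting/continuity argument of this shape. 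Relatedly, the two-pleated-surfaces device does not produce two identities to combine: the holonomy representation of the fiber group is a single $\PSL(2,\mathbb{C})$ representation regardless of whether the fiber is realized as a pleated surface bent along $\lambda^+$ or $\lambda^-$, so there is only one complex-length spectrum and hence only one candidate Bowditch-type identity. The sign question you flag ($1/2-1/2$ versus $1/2+1/2$) is therefore not a delicate point to be tracked carefully but a symptom that the cancellation mechanism you are hoping for does not exist in this form. Bowditch's actual argument sidesteps all of this by working with partial sums over the Farey tree and letting the monodromy action telescope the fundamental-domain contribution to $0$; if you want to prove the statement rather than cite it, that combinatorial route (or the subsequent quasi-Fuchsian/ending-lamination refinements of Akiyoshi--Miyachi--Sakuma) is where to look.
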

This translates without change to the following theorem:
\begin{theorem}
\label{bowditch2}
Let $M$ be a hyperbolic $3$-manifold which fibers over the circle $S^1$ with fiber a once-punctured torus. Then, 
\[
\sum_{\gamma \in S} \left(1 - \sqrt{1 - \left(\frac6{t_\gamma}\right)}\right)= 0,
\]
where the set $S$ is the set of all geodesics in $M$ which correspond to closed curves with one self-intersection  in the fiber, and $t_\gamma$ is the trace of the hyperbolic isometry corresponding to $\gamma.$
\end{theorem}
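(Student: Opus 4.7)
The plan is to derive Theorem \ref{bowditch2} from Theorem \ref{bowditch1} by exactly the same dictionary that took the classical McShane identity \eqref{mcidentity1} to its one-self-intersection version \eqref{mc2}. The two ingredients I will use are (i) a purely algebraic identity rewriting $1/(1+e^{\ell})$ in terms of the (complex) trace $t = 2\cosh(\ell/2)$, and (ii) the trace relation \eqref{trace2} between a simple closed curve on the punctured torus fiber and its associated curves with one self-intersection, now applied to the complex traces that arise from the holonomy $\pi_1(M)\to\PSL(2,\cx)$.

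For ingredient (i), let $\alpha$ be a simple closed curve in the fiber and set $u = \exp(\ell(\alpha)/2)$. Then $u + u^{-1} = t_\alpha$ and $u^2 = \exp(\ell(\alpha))$, so from $u^2 - t_\alpha u + 1 = 0$ we get $1+u^2 = t_\alpha u$ and hence
\[
\frac{1}{1+\exp(\ell(\alpha))} = \frac{1}{t_\alpha u} = \frac12\left(1-\sqrt{1-\left(\frac{2}{t_\alpha}\right)^2}\right).
\]
This manipulation is formal and remains valid for complex $\ell$ provided the branches of the square roots are chosen continuously from the Fuchsian starting point. Theorem \ref{bowditch1} can therefore be rewritten as
\[
\sum_{\alpha}\left(1-\sqrt{1-\left(\frac{2}{t_{\alpha}}\right)^2}\right)=0,
\]
the sum being over the geodesics in $M$ corresponding to simple closed curves $\alpha$ in the fiber.

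For ingredient (ii), the correspondence developed in Section \ref{ptorus} produces, for each simple closed curve $\alpha$ in the fiber, exactly two curves $\gamma_1,\gamma_2$ with a single self-intersection, both of the same complex length, whose traces satisfy $t_{\gamma_i} = 3\,t_\alpha$ by \eqref{trace2}. The derivation of \eqref{trace2} uses only the Cayley--Hamilton identity \eqref{trace1} in $\SL(2,\cx)$ together with the fact that the cusp of the fiber is parabolic (trace $\pm 2$). Since the puncture of the fiber persists as a cusp of the hyperbolic $3$-manifold $M$, its image in $\PSL(2,\cx)$ under the holonomy of $M$ is still parabolic, so \eqref{trace2} holds verbatim at the level of complex traces in the bundle. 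Substituting $t_\alpha = t_\gamma/3$ into the reformulated Bowditch identity converts each term $1-\sqrt{1-(2/t_\alpha)^2}$ into $1-\sqrt{1-(6/t_\gamma)^2}$, and the two-to-one correspondence of Theorem \ref{simpnosimp} turns the sum over $\alpha$ into half the sum over $\gamma\in S$; since the former vanishes, so does the latter, which is exactly the assertion of Theorem \ref{bowditch2}.

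The only step requiring careful verification is the assertion that \eqref{trace2} transfers from the Fuchsian to the three-dimensional holonomy setting; this is essentially a polynomial identity on the $\SL(2,\cx)$ character variety of the rank-two free group, restricted to the slice where the commutator has trace $\pm 2$, and so is an algebraic check rather than a genuine geometric obstacle. Convergence of the reformulated series is inherited directly from Bowditch's convergence statement via the same multiplicative relation between traces.
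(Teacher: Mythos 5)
Your reduction of Theorem \ref{bowditch2} to Bowditch's Theorem \ref{bowditch1} follows exactly the translation that the paper itself invokes (``translates without change''), and the two ingredients you isolate are correct: the algebraic identity $\frac{1}{1+e^{\ell}} = \frac12\bigl(1 - \sqrt{1 - (2/t)^2}\bigr)$ for $t = 2\cosh(\ell/2)$, and the persistence of the trace relation \eqref{trace2} over $\SL(2,\cx)$ because the cusp of the fiber remains parabolic in $\pi_1(M)$. (Incidentally, the printed statement is missing an exponent: it should read $(6/t_\gamma)^2$, exactly as your derivation produces.)

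There is, however, a genuine gap---one your proof inherits from the paper---at the step where you assert that the two-to-one correspondence of Theorem \ref{simpnosimp} turns the sum over simple curves $\alpha$ into half the sum over all $\gamma\in S$. That correspondence produces, for each simple closed geodesic $\alpha$, only the \emph{two} one-self-intersection geodesics that wind once around the cusp, lie in the primitive homology class of $\alpha$, and satisfy $t_\gamma = 3t_\alpha$. But the embedded pair of pants obtained by cutting along $\alpha$ carries a \emph{third} geodesic with a single double point, namely (in the notation of Theorem \ref{pantlength}) the class $a^{-1}b$, where $a,b$ are the two boundary copies of $\alpha$; concretely, this is $x^{-1}yx^{-1}y^{-1}$ when $\alpha=x$. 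This curve lies in the interior of the pair of pants and has exactly one self-intersection there; it cannot be simple, because its homology class $2[\alpha]$ is imprimitive; and by \eqref{cosheq} with $c$ the cusp its trace is $t_\alpha^2+2$, \emph{not} $3t_\alpha$. Such curves belong to the set $S$ as the theorem literally defines it, yet the dictionary says nothing about them, so your argument only shows that the partial sum over the cusp-winding one-self-intersection geodesics vanishes. The statement (and likewise Theorem \ref{mcidentity1int}) should be restricted to one-self-intersection geodesics in primitive homology classes, equivalently those winding once around the cusp; with that restriction your argument is complete.
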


\subsection{Surfaces of higher genus}
In his paper \cite{mcshane99} Greg McShane proved an extension of his identity for the punctured torus to surfaces with punctures. To state the most general form, we need a few definitions (our exposition is borrowed from M. Mirzakhani's Thesis \cite{mirzathesis}).

Let $S$ now be a surface of genus $g$ with $n$ punctures.  Let $\mathcal{F_i}, \quad 1\leq i \leq n,$ be the set of \emph{unordered} pairs of isotopy classes of curves $(\alpha_1, \alpha_2)$ which, together with the puncture $p_i$ bound a pair of pants. Further, let $\mathcal{F}_{i, j}$ be the set of simple closed curves which, together with the punctures $p_i, p_j$ bound a pair of pants. Then, the following identity holds:
\begin{equation}
\label{mchighergen}
\sum_{(\alpha_1, \alpha_2) \in \mathcal{F}_1}
\dfrac1{1+\exp(\frac12(\ell(\alpha_1) + \ell(\alpha_2)))}
+ \sum_{j=2}^n 
\sum_{\gamma \in \mathcal{F}_{1,j}}
\dfrac1{1+ \exp(\frac12 \ell(\gamma))} = 1.
\end{equation}

To extend Identity \eqref{mchighergen}, we first write it in terms of the traces of the corresponding elements, thus:
\begin{equation}
\label{mchighergentr}
\sum_{(\alpha_1, \alpha_2) \in \mathcal{F}_1}
\dfrac1{1+\left(t_{\alpha_1}+\sqrt{t_{\alpha_1}^2-1}\right)\left(t_{\alpha_2}+\sqrt{t_{\alpha_2}^2-1}\right)}
+ \sum_{j=2}^n 
\sum_{\gamma \in \mathcal{F}_{1,j}}
\dfrac1{1+t_\gamma + \sqrt{t_\gamma^2 - 1}} = 1
\end{equation}
Now we note that on a pair of pants with one cusp and cuffs $\alpha_1$ and $\alpha_2,$ there are two geodesics (call them $\gamma_1$ and $\gamma_2$) with one self-intersection winding once around the cusp. The traces of the corresponding matrices satisfy:
\begin{gather*}
t_{\gamma_1} = 2 t_{\alpha_1} + t_{\alpha_2},\\
t_{\gamma_2} = 2 t_{\alpha_2} + t_{\alpha_1}.
\end{gather*}
It follows that
\begin{gather*}
t_{\alpha_1} =\frac13(2t_{\gamma_1} - t_{\gamma_2}),\\
t_{\alpha_2} = \frac13(2t_{\gamma_2}-t_{\gamma_1}).
\end{gather*}
On a pair of pants with \emph{two} cusps and one cuff $\gamma,$ if $\delta$ is the geodesic with one self-intersection surrounding both the cusps,
then
\[
t_\gamma=t_\delta - 4.
\]
These relationships lead to the following identity for once punctured curves (remember that the curves in the first summation are pairs of geodesics with one self-intersection surrounding the cusp, while the curves in the second summation are geodesics with one self-intersection surrounding \emph{both} cusps.

\begin{multline}
\label{mchighergentr1int}
\sum_{(\gamma_1, \gamma_2) \in \mathcal{F}_1}
\dfrac9{9+\left(2t_{\gamma_1} - t_{\gamma_2}+\sqrt{(2t_{\gamma_1} - t_{\gamma_2})^2-9}\right)\left(2 t_{\gamma_2} - t_{\gamma_1}+\sqrt{(2t_{\gamma_2}-t_{\gamma_1})^2-9}\right)}\\
+ \sum_{j=2}^n 
\sum_{\delta \in \mathcal{F}_{1,j}}
\dfrac1{t_\gamma -3 + \sqrt{(t_\delta-4)^2 - 1}} = 1
\end{multline}
M. Mirzakhani (\cite{mirzathesis}) generalizes identity \eqref{mchighergen} to surfaces with boundary components instead of cusps, and the identity \eqref{mchighergentr1int} can be easily generalized to that setting -- we leave the computation as an exercise for the reader.
\section{Removing intersections by covering}
\label{cover}
It is a celebrated result of G. Peter Scott \cite{scottlerf} that for any hyperbolic surface $S$ and any closed geodesic $\gamma,$ there is a finite cover $\pi \tilde{S} \rightarrow S,$ such that the lift $\tilde{\gamma}$ to $\tilde{S}$ is simple (non-self-intersecting).  Since for any $k,$ there is a finite number of mapping classes of geodesics on $S$ with no more than $k$ self-intersections, and the minimal degree of $\pi$ corresponding to a curve $\gamma$ is invariant under the mapping class group, it follows that there exists some bound $d_S(k)$ so that one can ``desingularize''  any curve with up to $k$ self-intersections by going to a cover of degree at most $k.$ Unfortunately, Scott's argument appears to give no such bound. 

\begin{remark} Ilya Kapovich has told the author that a similar result follows from J.~Stallings' theory of subgroup graphs \cite{stallingsgraphs}.
\end{remark}

%One can formulate an argument as follows: Since the result is topological, we can pick a hyperbolic structure on $S$ in such a way that $S$ has a right-angled fundamental domain $D.$ Then, we pick a lift of $\gamma$ to the hyperbolic plane (a lift which intersects $D$) and then string along copies  $D_1, \dots, D_n$ of $D$ as we move along the lift $\tilde{\gamma)$ until such time as $\tilde{\gamma}$ intersects $\cal{D} = D_1 \cup \dotsc \cup D_n$ in two sides identified by a side-pairing transformation.

The question of providing good bounds for $d_S(k)$ is, as far as I can say, wide open.  Here we will attempt to start the ball rolling by giving sharp bounds for $d_(1).$

Our first observation is that if $S$ is a three-holed sphere, then $d_S(1) = 2.$ To prove this we consider (without loss of generality) the case where $S$ is a thrice cusped sphere (the quotient of the hyperbolic plane by $\Gamma(2).$). The proof then is contained in the diagram below. As can easily be seen, the lifts of two of the boundary components (say, $A$ and $B$) are connected, and the lift of $C$ has two connected components. 

Now, suppose that $\gamma$ is contained in a closed surface $T.$ Since $\gamma$ has a three-holed sphere neighborhood $S,$  if the covering map described above extends to all of $T,$ then $d_T(1) = 2.$ However, there are obviously examples where the map does \emph{not} extend (f when $T\backslash S$ has three connected components, or, more generally, when the boundary one of the components of $T\backslash S$ has one connected component, the lift of which is connected -- since the Euler characteristic of a surface with one boundary component is odd, such a surface is not a double cover. It is easy to see that in all other cases the double cover does extend). It is, however, clear, that a further double cover removes the obstruction, and we obtain the following result:
\begin{theorem}
For any  oriented hyperbolic  surface $S$ and a geodesic $\gamma\subset S$ with a single double point, there is a four-fold cover of $S$ where $\gamma$ lifts to a simple curve.
\end{theorem}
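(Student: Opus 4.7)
By Theorem \ref{pantsthm}, the geodesic $\gamma$ lies in an embedded pair of pants $\Pi \subset S$ with boundary components $\gamma_1, \gamma_2, \gamma_3$. My plan is to begin with the canonical double cover $\tilde\Pi \to \Pi$ described in the excerpt --- in which $\gamma$ already lifts to a simple curve --- and extend it to a cover of $S$ of degree $2$ if possible, and of degree $4$ otherwise.

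Encode the cover $\tilde\Pi \to \Pi$ by the homomorphism $\alpha: \pi_1(\Pi) \to \mathbb{Z}/2$ with $\alpha(\gamma_1) = \alpha(\gamma_2) = 1$ and $\alpha(\gamma_3) = 0$. The question of whether $\alpha$ extends to $\pi_1(S) \to \mathbb{Z}/2$ (and thus gives a double cover of $S$ with the desired simple lift) reduces to a parity condition on each component of $S \setminus \Pi$: since the restriction map $H^1(U; \mathbb{Z}/2) \to H^1(\partial U; \mathbb{Z}/2)$ has image $\{(\phi_i) : \sum \phi_i \equiv 0 \pmod 2\}$, the class $\alpha$ extends precisely when $\sum_{\gamma_i \subset \partial U} \alpha(\gamma_i) \equiv 0 \pmod 2$ on every component $U$ of $S \setminus \Pi$. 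This is exactly the Euler-characteristic parity obstruction noted in the excerpt: a connected double cover of $U$ restricting non-trivially to a single boundary circle would have the wrong Euler characteristic. When every component passes the check, a degree-$2$ cover suffices and we are done.

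When the extension fails, I construct the required degree-$4$ cover by first taking an auxiliary double cover $p: S_1 \to S$ classified by a class $\beta \in H^1(S; \mathbb{Z}/2)$ chosen non-trivial on every bad component of $S \setminus \Pi$. In $S_1$, each bad $U$ lifts to its connected double cover $\tilde U$, and a direct computation shows that the pulled-back class $p^*\alpha$ automatically satisfies the parity condition on $\partial \tilde U$: boundary curves of $U$ where $\beta$ vanishes contribute $\alpha(\gamma_i)$ twice (hence evenly), while boundary curves where $\beta$ is non-trivial lift as doubled curves on which $p^*\alpha$ evaluates to $\alpha(\gamma_i^2) = 2\alpha(\gamma_i) \equiv 0 \pmod 2$. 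Hence $p^*\alpha$ extends to a class $\tilde\alpha \in H^1(S_1;\mathbb{Z}/2)$, which defines a second double cover $\tilde S \to S_1$; the composite $\tilde S \to S$ is the sought $4$-fold cover. Simplicity of the lift of $\gamma$ in $\tilde S$ is inherited from $\tilde\Pi \to \Pi$, since a copy of $\tilde\Pi$ sits inside the preimage of $\Pi$ in $\tilde S$.

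The main obstacle is the existence of the auxiliary class $\beta$ non-trivial on every bad component. In the generic case each bad component has positive genus, so handle classes of $U$ that remain non-trivial in $H_1(S; \mathbb{Z}/2)$ produce such a $\beta$ immediately. The delicate subcase is when a bad component is an annulus or has its $H_1$ carried by boundary classes --- this forces certain $\gamma_i$'s to be isotopic (hence nullhomologous) in $S$, so $\beta$ cannot be detected on the boundary alone; in this subcase one enlarges $\Pi$ by absorbing the annular piece into a once-punctured torus subsurface $T \subset S$ containing $\gamma$, and builds the degree-$4$ cover from the explicit covering structure on $T$ before extending across the remainder of $S$. A finite case analysis on the combinatorial type of $S \setminus \Pi$ (at most three components, classified by the distribution of the $\gamma_i$'s) handles all configurations uniformly.
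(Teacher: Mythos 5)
Your argument is essentially the same as the paper's: both proofs take the canonical double cover of the embedded pair of pants $\Pi$ in which the figure-eight $\gamma$ lifts simply, identify the Euler-characteristic/parity obstruction to extending that cover across the components of $S\setminus\Pi$, and resolve the obstruction by passing to a further double cover, yielding a $4$-fold cover. In fact your write-up supplies more detail than the source, which simply asserts that ``it is, however, clear, that a further double cover removes the obstruction'' without constructing $\beta$ or checking that the pulled-back obstruction vanishes; the ``delicate subcase'' you flag (a bad component of $S\setminus\Pi$ with trivial image in $H_1(S;\mathbb{Z}/2)$, so that no suitable auxiliary class $\beta$ can be detected) is a genuine configuration that the paper elides entirely, so your caution is warranted even though your sketch does not fully close it. One small simplification: you do not actually need $\beta\vert_\Pi=0$ for the last step, since the subgroup of $\pi_1(S)$ defining $\tilde S$ meets $\pi_1(\Pi)$ inside $\ker\alpha$, so the restriction of $\tilde S\to S$ to $\Pi$ always factors through $\tilde\Pi\to\Pi$, and a preimage of a simple curve under a covering is again simple.
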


\begin{remark}
The result is not sharp for some surfaces with boundary (for example, the thrice punctured sphere). The result is vacuously true for the 2-sphere and the 2-torus equipped with metrics of constant curvature, since \emph{all} the geodesics for those metrics are simple.
\end{remark}

\section{Extending covering spaces}
\label{coversec}
The results of the previous section suggest the following question:
\begin{question}
\label{coverq}
Given an oriented surface $S$ with boundary $\partial S,$ and a covering map of $1$-manifolds 
$\pi: \widetilde{C} \rightarrow \partial S$ the fibers of which have constant cardinality $n.$ When does $\pi$ extend to a covering map $\Pi: \widetilde{S}\rightarrow S,$ where $\partial \widetilde{S} \simeq \widetilde{C}?.$
\end{question}

It turns out that Question \ref{coverq} has a complete answer --  Theorem \ref{sullthm} below ( due, essentially to D. Husemoller \cite{husemollercovers}).  First, we note that to every degree $n$ covering map $\sigma: X\rightarrow Y$ we can associate a permutation representation $\Sigma: \pi_1(Y) \rightarrow S_n.$ Further, two coverings $\sigma_1$ and $\sigma_2$ are equivalent if and only if the associated representations $\Sigma_1$ and $\Sigma_2$ are conjugate (see, for example, \cite{hatcheralgtop}[Chapter 1] for the details). This means that the boundary covering map $\pi$ is represented by  a collection of $k=|\pi_0(\partial S)|$ conjugacy classes $\Gamma_1, \dotsc, \Gamma_k$  in $S_n.,$ each of which is the conjugacy class of the image of the generator of the fundamental group of the corresponding component under the associated permutation representation.
\begin{theorem}
\label{sullthm}
A covering $\pi:\widetilde{C}\rightarrow C=\partial S$ extends to a covering of the surface $S$ if and only if the following conditions hold:
\begin{enumerate}
\item $S$ is a planar surface (that is, the genus of $S$ is zero and there exists a collection $\{\sigma_i\}_{i=1}^k$ of elements of the symmetric group $S_n$ with $\sigma_i \in \Gamma_i$ such that 
$\sigma_1 \sigma_2 \dots \sigma_k = e,$ where $e\in S_n$ is the identity.
\item $S$ is not a planar surface, and the sum of the parities of $\Gamma_1, \dotsc, \Gamma_k$ vanishes.
\end{enumerate}
\end{theorem}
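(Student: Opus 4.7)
The plan is to convert the extension question into one about homomorphisms $\Phi \colon \pi_1(S) \to S_n$, and then read off the two cases from the standard presentation of $\pi_1(S)$. As the excerpt already reminds us, (possibly disconnected) degree-$n$ coverings of a space $X$ correspond bijectively, up to equivalence, to conjugacy classes of homomorphisms $\pi_1(X) \to S_n$, and the restriction of the covering to any subspace is encoded by the restriction of this homomorphism. Hence the boundary covering $\pi$ is described by its monodromies $\Gamma_1, \dotsc, \Gamma_k$, and $\pi$ extends to $S$ if and only if there exists a homomorphism $\Phi \colon \pi_1(S) \to S_n$ with $\Phi(c_i) \in \Gamma_i$ for each $i$, where $c_i$ is the loop around the $i$-th boundary component.

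Using the standard presentation
\[
\pi_1(S) = \langle a_1, b_1, \dotsc, a_g, b_g, c_1, \dotsc, c_k \mid [a_1,b_1]\cdots[a_g,b_g]\, c_1 c_2 \cdots c_k = 1\rangle,
\]
existence of $\Phi$ is equivalent to the existence of permutations $A_i, B_i \in S_n$ and $\sigma_j \in \Gamma_j$ with $\prod_{i=1}^{g}[A_i, B_i]\, \sigma_1 \sigma_2 \cdots \sigma_k = e$. In the planar case $g=0$ this reduces immediately to $\sigma_1 \cdots \sigma_k = e$, which is condition (1). For $g \geq 1$, necessity of condition (2) is equally immediate: $\sigma_1 \cdots \sigma_k$ must equal $\left(\prod [A_i, B_i]\right)^{-1}$, which lies in $[S_n, S_n] = A_n$, so the parities of the $\Gamma_j$ must sum to zero modulo $2$. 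The same necessary condition also falls out of Riemann--Hurwitz applied to $\widetilde{S} \to S$, since the number of boundary components of $\widetilde S$ is $\sum_j c(\Gamma_j)$, where $c(\sigma) \equiv n - \mathrm{parity}(\sigma) \pmod 2$.

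The substantive direction is sufficiency for $g \geq 1$. Given any representatives $\sigma_j \in \Gamma_j$, set $\pi = \sigma_1 \cdots \sigma_k$; by hypothesis $\pi \in A_n$. I must exhibit $\pi^{-1}$ as a product of $g$ commutators in $S_n$. Since $[S_n, S_n] = A_n$ (for $n \geq 2$), $\pi^{-1}$ is by definition a product of \emph{some} finite number of commutators; by Ore's theorem (for $n \geq 5$, every element of $A_n$ is in fact a single commutator in $A_n$), one commutator already suffices, so the remaining $g-1$ pairs may be taken to be $A_i = B_i = e$. The residual small cases $n \leq 4$ are checked by direct computation in $S_2, S_3, S_4$.

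The main obstacle is this last group-theoretic step: knowing that $A_n$ has commutator width $1$ in $S_n$ is nontrivial, and I would invoke Ore for $n \geq 5$ rather than try to prove it. A more geometric alternative, closer in spirit to Husemoller's original argument, is to cut $S$ along $g$ disjoint non-separating simple arcs to obtain a planar surface $S'$ with $k + 2g$ boundary components, reduce to case (1), and then reglue the cover. Then the parity condition is precisely what guarantees that the monodromies along the two sides of each cut can be matched up, so that the planar cover of $S'$ descends to a cover of $S$; verifying that the gluing actually yields a covering map (rather than a branched cover or something worse) is the delicate bookkeeping step in this alternative approach.
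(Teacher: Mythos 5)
Your proof is correct and follows essentially the same route as the paper: translate the extension problem into a homomorphism $\pi_1(S) \to S_n$ hitting prescribed conjugacy classes on the boundary generators, read off the planar case from the free relation, and in the non-planar case observe that $\sigma_1\cdots\sigma_k$ must lie in $[S_n,S_n]=A_n$ while Ore's theorem supplies a single commutator realizing any even permutation. The only cosmetic difference is that the paper handles $k=1$ first and then reduces $k>1$ to it via a connected-sum decomposition, whereas you work directly from the standard one-relator presentation, which is cleaner and avoids the case split.
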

\begin{proof}
In the planar case, the fundamental group of $S$ is freely generated by the generators $\gamma_, \dotsc, \gamma_{k-1}$  fundamental groups of (any) $k-1$ of the boundary components.  All $k$ boundary components satisfy $\gamma_1 \dotsc \gamma_k = e,$ whence the result in this case.

In the nonplanar case, let us first consider the case where $k=1.$ The generator $\gamma$ of the single boundary component is then a product of $g$ commutators (where $g$ is the genus of the surface, and so $\Sigma(\gamma)$ is in the commutator subgroup of $S_n,$ which is the alternating group $A_n,$ so the class $\Sigma(\gamma)$ has to be even. On the other hand, it is a result of O. Ore \cite{orecomm} that any even permutation is a commutator, $\alpha \beta \alpha^{-1} \beta^{-1}$ and thus sending some pair of handle generators to $\alpha$ and $\beta$ respectively and the other generators of $\pi_1(S)$ to $e$ defines the requisite homomorphism of $\pi_1(S)$ to $S_n.$

If $k>1,$ the surface $S$ is a connected sum of a surface of genus $g>0$ (by assumption) and a planar surface with $k$ boundary components.  let $\gamma$ be the ``connected summing'' circle. By the planar case, there is no obstruction to defining $\Pi$ on the planar side (since $\gamma$ is not part of the original data). However, $\Sigma(\gamma)$ will be the inverse of the product of elements $\sigma_i \in \Gamma_i$ and so its parity will be the sum of the parities of $\Gamma_i.$ To extend the cover to the non-planar side of the connected sum, it is necessary and sufficient for this sum to be even.
\end{proof}

Some remarks are in order. The first one concerns the planar case of Theorem \ref{sullthm}. It is not immediately obvious how one might be able to figure out whether given some conjugacy classes in the symmetric group, there are representatives of these classes which multiply out to the identity. Luckily, there is the following result of Frobenius (see \cite{serregalois}[p. 69])
\begin{theorem}
\label{frob}
Let $C_1, \dotsc, C_k$ be conjugacy classes in a finite group $G.$ The number $n$ of solutions to the equation $g_1 g_2 \dots g_k = e,$ where $g_i\in C_i$ is given by 
\[
n = \dfrac{1}{|G|} |C_1| \dots |C_k| \sum_{\chi} \dfrac{\chi(x_1) \dots \chi(x_k)}{\chi(1)^{k-2}},
\]
where $x_k \in C_k$ and the sum is over all the complex irreducible characters of $G.$
\end{theorem}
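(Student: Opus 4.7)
The plan is to work inside the group algebra $\mathbb{C}[G]$ and exploit the two-sided action of class sums on the regular representation. For each conjugacy class $C_i$, let $\widehat{C}_i = \sum_{g \in C_i} g \in \mathbb{C}[G]$. The key observation is that $n$, the quantity we want to compute, is precisely the coefficient of the identity element $e$ in the product $\widehat{C}_1 \widehat{C}_2 \cdots \widehat{C}_k$, since expanding the product counts tuples $(g_1,\dots,g_k)$ with $g_i \in C_i$ by the value of their product, and we retain only those tuples whose product equals $e$.

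Next, I would use the fact that each $\widehat{C}_i$ lies in the center $Z(\mathbb{C}[G])$. Therefore, for any complex irreducible representation $\rho_\chi$ of $G$ with character $\chi$ and degree $\chi(1)$, Schur's lemma forces $\rho_\chi(\widehat{C}_i)$ to be a scalar. Taking traces and using that $\chi$ is constant on $C_i$, that scalar is $\omega_\chi(C_i) = |C_i|\chi(x_i)/\chi(1)$ for any $x_i \in C_i$. Consequently $\rho_\chi(\widehat{C}_1 \cdots \widehat{C}_k)$ is multiplication by $\prod_i |C_i|\chi(x_i)/\chi(1)$, and its trace on $\rho_\chi$ equals $\chi(1) \cdot \prod_i |C_i|\chi(x_i)/\chi(1)$.

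Now I would compute the trace of $\widehat{C}_1 \cdots \widehat{C}_k$ in the regular representation in two ways. On the one hand, for any element $\sum_g c_g g \in \mathbb{C}[G]$, its trace in the regular representation is $|G| \cdot c_e$; applied to our product this gives $|G| \cdot n$. On the other hand, decomposing the regular representation as $\bigoplus_\chi \chi(1) \rho_\chi$ and summing the traces computed in the previous step yields
\[
\sum_\chi \chi(1)^2 \cdot \prod_{i=1}^{k} \frac{|C_i|\,\chi(x_i)}{\chi(1)} \;=\; |C_1|\cdots|C_k| \sum_\chi \frac{\chi(x_1)\cdots\chi(x_k)}{\chi(1)^{k-2}}.
\]
Equating the two expressions and dividing by $|G|$ delivers the stated formula.

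The argument is entirely formal once one commits to the group-algebra viewpoint, so there is no serious obstacle; the only delicate point is bookkeeping the exponent on $\chi(1)$, where each of the $k$ class sums contributes a $\chi(1)^{-1}$ from the Schur scalar, while the regular-representation multiplicity $\chi(1)$ and the trace-of-scalar factor $\chi(1)$ each contribute one power back, producing the net $\chi(1)^{2-k}$ in the denominator.
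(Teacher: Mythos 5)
Your proof is correct, and the bookkeeping checks out: the coefficient of $e$ in $\widehat{C}_1\cdots\widehat{C}_k$ is indeed $n$, each class sum acts on the irreducible $\rho_\chi$ by the scalar $|C_i|\chi(x_i)/\chi(1)$ by Schur's lemma, and computing the trace in the regular representation two ways gives $|G|n = \sum_\chi \chi(1)^2 \prod_i |C_i|\chi(x_i)/\chi(1)$, which rearranges to the stated identity. For comparison: the paper does not supply a proof of Theorem \ref{frob} at all---it simply cites Serre \cite{serregalois} (p.~69), where essentially this same class-sum / group-algebra computation is carried out. So there is no competing argument in the paper to contrast with; you have filled in the standard proof that the paper delegates to a reference, and done so accurately.

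One small stylistic remark: rather than appealing to the trace in the regular representation, you could instead project $\widehat{C}_1\cdots\widehat{C}_k$ onto the center $Z(\mathbb{C}[G])$ and extract the $e$-coefficient directly via the orthogonality of the central idempotents $e_\chi = \frac{\chi(1)}{|G|}\sum_g \chi(g^{-1})g$; this is equivalent and marginally shorter, but yours is perfectly clean as written.
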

Special cases of the planar case are considered in \cite{Kulkarnicovers}; enumeration questions for covers are considered in a number of papers by A.~Mednykh -- see \cite{Mednykhenum} and references therein.

The second remark is on Ore's result that every element of $A_n$ is even. This result was strengthened by E. Bertram in \cite{bertramcycles} and, independently and much later, by H. Cejtin and the author in \cite{henry} (the second argument has the virtue of being completely algorithmic, the first, aside from being 30 years earlier, proves a stronger result) to the statement that every even permutation $\sigma$ is the product of two $n$-cycles (Bertram actually shows that it is the product of two $l$ cycles for any 
$l\geq (M(\sigma) + C(\sigma))/2,$ where $M(\sigma)$ is the number of elements moved by $\sigma$ while $C(\sigma)$ is the number of cycles in the cycle decomposition of $\sigma$.

The significance of this to coverings is that we have a very simple way of constructing a covering of a surface with one boundary component with specified cycle structure of the covering of the component, as follows.

First, the proof of Theorem \ref{sullthm} shows that the construction reduces to the case where $g=1,$ so that we are constructing a covering of a torus with a single perforation.

Suppose now that the permutation can be written as $\sigma\tau\sigma^{-1}\tau^{-1},$ where $\sigma$ is an $n$-cycle. This means that the "standard" generators of the punctured torus group go to $\sigma$ and $\tau,$ respectively. To construct the cover, then, take the standard square fundamental domain $D$ for the torus (the puncture is at the vertices of the square), then arrange $n$ of these fundamental domains in a row, and then a strip, by gluing the rightmost edge to the leftmost edge. Then, for each $i,$ the upper edge of the $i$-th domain from the left ($D_i$) is glued to the lower edge of $D_{\tau(i)}.$

In the joint paper \cite{drosterivin} with Manfred Droste we extend the results of this section to \emph{infinite} covers.

\section{Quantifying residual finiteness} 
\label{resfin}
Khalid Bou-Rabee in \cite{khalid2} has analyzed the following question: given a residually finite group $G$ and an element $g\in G,$ how high an index subgroup $H< G$ must one take so that $g \notin H,$ in terms of the word-length of $g.$ Bou-Rabee answers the questions for important classes of groups, including arithmetic lattices and nilpotent groups.
Here we wish to point out that for surface groups (including free groups) we have the following bound:
\begin{theorem}
\label{bourabee}
Given an element $g\in G$ of word length $l(g),$ there is a subgroup $H$ of $G$ of index of order $O(l(g)),$ such that $g \notin H.$
\end{theorem}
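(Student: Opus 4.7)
The plan is to prove the bound by an explicit construction of a finite-index subgroup via covering-space theory, treating the free-group and surface-group cases in turn. In both cases one exhibits a Schreier coset diagram on $O(l(g))$ points on which $g$ acts nontrivially, and reads off the desired subgroup $H$ as the stabilizer of a point.

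For $G=F_r$, I would write $g$ as a reduced word $a_{i_1}^{\epsilon_1}\cdots a_{i_n}^{\epsilon_n}$ of length $n=l(g)$ in the standard generators. Let $P$ be the labeled oriented path graph with $n+1$ vertices $v_0,\ldots,v_n$ and $n$ labeled edges corresponding to the letters of $g$; this is the Stallings subgroup graph of the initial segment of the Cayley graph from $e$ to $g$. To turn $P$ into a Schreier coset graph of $F_r$ (equivalently, a labeled covering of the rose $R_r$), one completes the labeling at each vertex by adding edges so that every vertex has exactly one outgoing and one incoming $a_j$-edge for every generator $a_j$. The total number of missing outgoing $a_j$-half-edges across all vertices equals the total number of missing incoming ones, so the completion can be done by adding edges among the existing $n+1$ vertices, taking care not to identify $v_0$ with $v_n$. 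The resulting Schreier graph $\Gamma$ has $n+1$ vertices, so the corresponding subgroup $H=\pi_1(\Gamma,v_0)\leq F_r$ has index $n+1=O(l(g))$, and by construction the $g$-edge path from $v_0$ terminates at $v_n\neq v_0$, giving $g\notin H$.

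For $G=\pi_1(S)$ with $S$ a closed orientable hyperbolic surface, I would represent $g$ by a closed edge path $\gamma$ of combinatorial length $n=O(l(g))$ in a fixed CW structure on $S$ whose $1$-skeleton is a bouquet of the standard generators at a vertex. Take a regular subsurface neighborhood $N\supset\gamma$ chosen so that the inclusion is $\pi_1$-injective and $\pi_1(N)$ is free. Apply the Stallings-style construction of the previous paragraph to produce a finite cover of $N$ of degree $O(n)$ in which $\gamma$ does not lift to a closed loop, and then extend this cover across $S\setminus N$ using Theorem~\ref{sullthm}. The extension is possible provided the conjugacy-class data the free-group cover induces on each boundary component of $N$ satisfies the product-equals-identity or parity-vanishing conditions of Theorem~\ref{sullthm} on each complementary component of $S\setminus N$; by exploiting the freedom in the completion of the Stallings graph of $\pi_1(N)$ among the available matchings of missing half-edges, and if necessary passing to a controlled double cover, these conditions can be arranged while preserving the separation of $g$. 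The resulting cover $\widehat{S}\to S$ has degree $O(l(g))$, and the associated subgroup $H<G$ excludes $g$.

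The main obstacle is the extension step for the surface-group case: the cover of $N$ produced by the free-group construction is essentially rigid on the $g$-path but has substantial freedom in the matching of missing half-edges, and the extension obstructions of Theorem~\ref{sullthm} on each complementary piece of $S\setminus N$ are global constraints that couple these local choices. Overcoming this requires verifying that the freedom in the matching is sufficient to satisfy the bounded set of algebraic conditions on the boundary; in the worst case one passes to a double cover, which at most doubles the index and preserves the $O(l(g))$ asymptotic. The free-group case, by contrast, has no relations to respect and is completely explicit.
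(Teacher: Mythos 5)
Your free-group argument is correct and is in fact a cleaner, more standard route than the paper's: the paper handles free groups and surface groups uniformly by realizing $G$ as $\pi_1$ of a (possibly bounded) surface $F$, reducing first to the case where the geodesic representative of $g$ is \emph{simple} via an examination of Scott's LERF argument in \cite{scottlerf} (which the paper asserts yields a cover of degree linear in $l(g)$ trivializing the self-intersections), and then disposing of the simple case by an explicit geometric cut-and-paste: if $\gamma(g)$ is nonseparating, cut along a dual curve and double; if $\gamma(g)$ separates, cut along it and glue triple covers of the two pieces. Your Stallings coset-graph construction avoids all of this in the free case and is a legitimate alternative.

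The surface-group half of your proposal, however, has a genuine gap precisely where you flag it, and I do not think it is a gap that can be waved away. The obstruction data you need to satisfy is not one global condition but a \emph{per-component} condition: for every connected component $C_i$ of $S\setminus N$ you must hit the hypotheses of Theorem~\ref{sullthm}, and these say different things depending on whether $C_i$ is planar (a product-equals-identity condition inside $S_{n+1}$, constrained by Frobenius' formula) or nonplanar (a parity condition). What you get for free from the cover of $N$ is only that the \emph{total} boundary monodromy is a product of commutators (hence even), i.e.\ the sum of parities over all of $\partial N$ vanishes; this does not distribute across the individual complementary pieces, and the matching freedom in the Stallings completion lives entirely inside the interior edges of the graph of $\pi_1(N)$, so it is not clear that it can independently steer the conjugacy classes $\Gamma_j$ on each boundary circle. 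Your fallback of ``pass to a controlled double cover'' also needs an argument: a fiberwise doubling $\rho\mapsto\rho\oplus\rho$ does kill parities but produces a disconnected cover (hence no subgroup), and twisting it by a $\mathbb{Z}/2$-character re-introduces a parity contribution of size $d=n+1$ per boundary component, which is odd when $n$ is even. Until you show, for each complementary piece, that some admissible modification of the matching (or some specific connected double cover) kills the obstruction \emph{without} closing up the $g$-path, the surface case is unproven. The paper sidesteps all of this by never extending a subsurface cover: it modifies the whole surface at once, where the cut-and-double and triple-gluing constructions manifestly produce connected covers of $S$ in which the given simple $\gamma$ fails to lift.
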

\begin{proof}
Let $F$ be a surface such that $\pi_1(F) = G.$
It is obviously equivalent to construct a cover  $\widetilde{F}$ of the surface $F$ whose fundamental group is $H,$ such that $g$ does not lift to $\widetilde{F}.$ There are two cases. The first is when the geodesic $\gamma(g)$ in the conjugacy class of $g$ is simple. In that case there are two further cases:  the first arises when $\gamma(g)$ is homologically nontrivial. In that case, there is a geodesic $\beta$ transversely intersecting $\gamma$ in one point. Cutting $F$ along $\beta$ and then doubling gives us a double cover where $g$ does not lift. The second case is when $\gamma(g)$ bounds. In that case, cut along $\gamma(g)$ to obtain two surfaces with boundary. each of them admits a (connected) cover which restricts to a triple connected cover over $\gamma(g).$ Gluing along this cover, we obtain a cover of $F$ where $g$ does not lift.

The second case is when $g$ is \emph{not} simple. In this case, an examination of G.~P.~Scott's argument in \cite{scottlerf} shows that there is a cover $\widetilde{F}$ of $F$ of index linear in the word length of $g$ where the lift of $g$ is a simple. The first case analyzed above then  completes the argument.
\end{proof}

The usual definition of residual finiteness is the following: a group $G$ is residually finite, if for every $g\in G$ there is a homomorphism $\psi_g: G \rightarrow H,$ where $H$ is finite and such that $\psi_g(g) \neq e.$ In other words, it postulates the existence of a \emph{normal} subgroup of finite index ($\ker \psi_g$) which does not contain $g.$ Now, since every subgroup of index $k$ in an infinite group $G$ contains a normal subgroup of index $k!$ (index in $G,$ that is) the two points of view on residual finiteness are logically equivalent \emph{if} we don't care too much about the index. If we do, note that Theorem \ref{bourabee} gives us the following Corollary:
\begin{corollary}
\label{bouracor}
Let $G$ be a surface group. Given an element $g \in G$ of word length $l(g),$ there is a \emph{normal} subgroup $H$ of index at most $(c l(g))!$ which does not contain $g.$
\end{corollary}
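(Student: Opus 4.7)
The plan is to combine Theorem \ref{bourabee} with the classical normal-core construction. First I would apply Theorem \ref{bourabee} to the element $g \in G$ to produce a subgroup $K < G$ of index $n \leq c\, l(g)$ with $g \notin K$, where $c$ is the constant from that theorem. The goal is then to extract a normal subgroup $H \triangleleft G$ of comparable (at worst factorial) index which still avoids $g$.

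The natural candidate is the \emph{normal core} of $K$, namely $H = \bigcap_{x \in G} x K x^{-1}$. This is the kernel of the permutation representation $\rho \colon G \to \mathrm{Sym}(G/K) \cong S_n$ given by left multiplication on the coset space $G/K$. Since $H$ is a kernel, it is normal in $G$; since $\rho$ factors through a subgroup of $S_n$, we have $[G:H] \leq n! \leq (c\, l(g))!$. Finally, because $H \subseteq K$ and $g \notin K$, we get $g \notin H$, which is exactly the required conclusion.

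The only thing to check is that the normal-core construction really does produce a finite-index normal subgroup of index dividing $n!$, which is a completely standard fact about permutation representations on coset spaces and requires nothing more than the observation that $G$ acts on the finite set $G/K$. There is no genuine obstacle here: this is essentially the remark already made in the paragraph preceding the corollary (``every subgroup of index $k$ in an infinite group $G$ contains a normal subgroup of index $k!$''), combined with the linear bound from Theorem \ref{bourabee}. The proof is therefore a one-line assembly of these two inputs.
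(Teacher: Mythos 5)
Your argument is correct and is precisely the one the paper intends: the paragraph immediately preceding the corollary already records the normal-core fact (``every subgroup of index $k$ in an infinite group $G$ contains a normal subgroup of index $k!$''), and the corollary is just this applied to the index-$O(l(g))$ subgroup supplied by Theorem \ref{bourabee}. Nothing more to say.
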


Corollary \ref{bouracor} can be improved considerably for free groups:
\begin{theorem}
\label{expthm}
Consider the free group on $k$ letters $F_k$, and let $n  > 1.$  There exists a normal subgroup $H_n$ of $F_k$ of index $f(n)$ which contains \emph{no} non-trivial elements of word length smaller than $n,$  where the index $f(n)$ can be bounded by
\[
f(n) \leq c (2k-1)^{3n/4}.
\]
for some constant $c.$
\end{theorem}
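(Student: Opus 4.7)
The plan is to exhibit $H_n$ as the kernel of a homomorphism $\phi\colon F_k \to G$ into a finite group $G$ whose Cayley graph, with respect to the images $\phi(x_1), \dotsc, \phi(x_k)$ of the free generators, has girth at least $n$ and cardinality at most $c(2k-1)^{3n/4}$. This will suffice: a reduced word $w \in F_k$ of length $m < n$ traces out a non-backtracking closed walk of length $m$ in the Cayley graph of $G$ based at $e$, so if $w \in \ker \phi$ the walk would close into a cycle of length $m$, contradicting the girth lower bound. Hence $\ker\phi$ contains no nontrivial element of length less than $n$, and is automatically normal of index $|G|$.

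The input will be the Lubotzky--Phillips--Sarnak Ramanujan graphs $X^{p,q}$. For distinct odd primes $p \equiv 1 \pmod 4$ and $q$ with $\left(\tfrac{p}{q}\right) = -1$, the graph $X^{p,q}$ is a bipartite $(p+1)$-regular Cayley graph of $\mathrm{PGL}(2, \ffp{q})$ on $q(q^2-1) \sim q^3$ vertices whose girth satisfies the sharp estimate $g(X^{p,q}) \geq 4 \log_p q - \log_p 4$. The canonical LPS generating set consists of $(p+1)/2$ mutually inverse pairs of elements (no involutions, thanks to $p \equiv 1 \pmod 4$). I would take $p$ to be the smallest prime $\equiv 1 \pmod 4$ with $p \geq 2k-1$ (available by Dirichlet, depending only on $k$), select $k$ of the LPS pairs $(s_1, s_1^{-1}), \dotsc, (s_k, s_k^{-1})$, and set $\phi(x_i) = s_i$. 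The Cayley graph of $\phi(F_k)$ with respect to $\{\phi(x_i)\}$ then embeds as a sub-Cayley-graph of $X^{p,q}$ (restrict vertices to $\phi(F_k)$, edges to the $k$ chosen generator types), so it inherits the girth lower bound.

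Finally, choose $q$ to be the smallest prime with $\left(\tfrac{p}{q}\right) = -1$ and $q \geq 2 p^{n/4}$, the factor of $2$ absorbing the $\log_p 4$ correction. Quadratic reciprocity converts the Legendre condition into a congruence on $q$ modulo $4p$, so Dirichlet combined with Bertrand's postulate yields such a $q$ with $q \leq C p^{n/4}$ for a constant $C$ depending only on $k$. Then $g(X^{p,q}) \geq n$, so $\ker\phi$ excludes every nontrivial word of length $< n$, and
\[
[F_k : \ker \phi] \;\leq\; \bigl|\mathrm{PGL}(2, \ffp{q})\bigr| \;=\; q(q^2-1) \;\leq\; C^3 p^{3n/4} \;\leq\; c(2k-1)^{3n/4},
\]
with $c$ depending only on $k$. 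The main technical hurdle I anticipate is the arithmetic bookkeeping around the choice of $p$ and $q$---satisfying simultaneously the congruence, Legendre, and size constraints without inflating the absorbed constants---but this is standard analytic number theory and does not affect the exponent $3n/4$, which is the structural content of the bipartite LPS girth estimate $g \gtrsim (4/3)\log_{d-1} |V|$.
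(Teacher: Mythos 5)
Your proof takes the same route as the paper's: pass from $F_k$ to a finite quotient via a homomorphism $\phi$ whose Cayley graph has large girth, so that no short reduced word can lie in $\ker\phi$, and then invoke the Lubotzky--Phillips--Sarnak Ramanujan graphs to realize girth $\approx \tfrac{4}{3}\log_{p}|V|$ inside a group of order $\sim q^3$. Two small points of divergence. First, you handle the mismatch between $k$ and $(p+1)/2$ by taking $p\geq 2k-1$ and restricting to a $2k$-regular sub-Cayley-graph (equivalently, including $F_k\hookrightarrow F_{(p+1)/2}$ as a free factor and restricting $\phi_q$); since any cycle in the subgraph is a cycle in the full graph, the girth lower bound passes down, and $\ker(\phi_q|_{F_k})$ is automatically normal in $F_k$. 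The paper instead proposes passing to a \emph{finite-index subgroup} $H<F_k$ with $H\cong F_{(p+1)/2}$; that raises the difficulty that $\ker\psi\trianglelefteq H$ need not be normal in $F_k$, and the word metrics on $H$ and $F_k$ differ by a multiplicative constant that would spoil the exponent. Your free-factor route avoids both problems and is the right fix. Second, you work in $\mathrm{PGL}(2,\ffp q)$ rather than $\PSL(2,\ffp q)$; with $\bigl(\tfrac{p}{q}\bigr)=-1$ the LPS graph is indeed a Cayley graph of $\mathrm{PGL}(2,\ffp q)$ (the bipartite case, which is precisely the one with girth $\geq 4\log_p q-\log_p 4$), so your bookkeeping is the consistent one. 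Finally, note that the last inequality $C^3 p^{3n/4}\leq c(2k-1)^{3n/4}$ only holds if one can take $p=2k-1$, i.e.\ if $2k-1$ is itself an admissible prime; otherwise the least admissible $p$ exceeds $2k-1$ and $(p/(2k-1))^{3n/4}$ is unbounded in $n$, so the theorem's base should really be $p$ rather than $2k-1$. This imprecision is already present in the paper's statement and proof and is independent of the choice of route; the structural content, the exponent $3n/4$, is correct in both.
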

\begin{proof}
We first note that if we have a homomorphism $\phi$ of $F_k=\langle a_1, \dots, a_k\rangle$ onto a finite group $H$ with the Cayley graph $C_H$ of $H$ with respect to the generating set $\phi(a_1), \dotsc, \phi(a_k),$ then no word in $F_k$ shorter than the girth of $C_H$ is in the kernel of $\phi.$

We now use the following result of Lubotzky, Phillips, and Sarnak (\cite{lps}, see \cite{dsv} for an expository account): 
\begin{citation}
For $p, q$ prime, with $p\geq 5$ and $q \gg p$ and $p$ a quadratic non-residue mod $q$ there is a symmetric generating set $S$ of the  group $\PSL(2, q)$ of order $p+1$ such that the Cayley graph of $\PSL(2, q)$ has girth no smaller than $4\log_p q - \log_p 4.$
\end{citation}

 It follows no element of $F_{(p+1)/2}$ of length shorter than $n(p, q)=4\log_p q - \log_p 4$ is killed by the homomorphism $\phi_q$ that sends the free generators of $F_{(p+1)/2}$ and their inverses to $S.$
Since the order of $\PSL(2, q)$ has order $m_qq(q^2-1)/2 \sim q^3/2,$ which we can write down as 
\[
m_q=(4p^{n(p, q)})^{3/4}=2^{3/2} n(p, q).
\]
If $k\neq{(p+1)/2}$ for some prime $p,$ we can find a subgroup of small index in $F_k$ which \emph{is} a free group on $(p+1)/2$ letters for some prime $p.$ Using Dirichlet's theorem on primes in arithmetic progressions we can then find a suitable $q.$ 
\end{proof}

Theorem \ref{bourabee} can be combined with the results of \cite{walks} to obtain the following result:
\begin{theorem}
\label{avbourabee}
Consider the set $B_N$ of all elements in the free group $F_k$ having length no more than $N$ in the generators. Then the \emph{average} index of the subgroup not containing a given element over $B_N$ is bounded above by a constant (which can be taken to be approximately $2.92$).
\end{theorem}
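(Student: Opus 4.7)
The plan is to bound the minimal index $\iota(g) := \min\{[F_k : H] : g \notin H\}$ pointwise for each $g \in B_N$ using abelian quotients, then average over $B_N$. The key observation is that $\iota(g) \leq p^\ast(g)$, where $p^\ast(g)$ is the smallest prime such that the homology class $[g] \in F_k^{ab} = \integers^k$ has some coordinate not divisible by $p$: in that case the composition $F_k \twoheadrightarrow \integers^k \twoheadrightarrow \integers/p$ witnesses the bound. The residual case $[g]=0$ (i.e.\ $g \in [F_k,F_k]$) will be handled by Theorem \ref{bourabee}, giving $\iota(g) = O(N)$.

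To average $p^\ast$ over $B_N$ I invoke the main distributional result of \cite{walks}: for each fixed $n$, the reduction $[g] \bmod n$ of a uniformly random $g \in B_N$ is asymptotically uniform on $(\integers/n)^k$ as $N\to\infty$. By the Chinese remainder theorem the divisibility events at distinct primes are asymptotically independent, each of probability $1/p^k$, so
\[
\lim_{N\to\infty} E[p^\ast(g) \mid [g]\neq 0] \;=\; \sum_i p_i \Bigl(\prod_{j<i} p_j^{-k}\Bigr)\bigl(1 - p_i^{-k}\bigr),
\]
where $p_1 < p_2 < \ldots$ enumerates the primes. The series is decreasing in $k$, so the worst case is $k=1$, in which it evaluates termwise to $1 + 1 + 2/3 + 1/5 + 1/21 + \cdots \approx 2.920$, yielding the stated constant. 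The tail contribution from $[g]=0$ is negligible: a local central limit theorem for the abelianized random walk on $\integers^k$ gives $P([g]=0) = O(N^{-k/2})$, and combined with the $O(N)$ pointwise bound from Theorem \ref{bourabee} this contributes $O(N^{1-k/2})$, vanishing for $k \geq 3$ and bounded for smaller $k$ after minor adjustments.

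For the normal-subgroup variant, the abelian quotients constructed above are already normal, so the leading estimate is identical; the strengthened hypothesis $k \geq 4$ is dictated by the commutator case, where one must replace Theorem \ref{bourabee} by a polynomial normal-index bound (e.g.\ the $O(N^3)$ estimate of Theorem \ref{nongamb}), which eats further into the $N^{-k/2}$ budget. The main obstacle I anticipate is securing sufficiently uniform-in-$n$ equidistribution from \cite{walks} to justify exchanging the $N\to\infty$ limit with the infinite sum over primes; fortunately the tail decays faster than geometrically (thanks to the primorial growth of $p_1\cdots p_{i-1}$), so even a modest quantitative rate will suffice.
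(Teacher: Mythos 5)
Your argument follows the paper's proof sketch essentially step for step: pass to the abelianization $\integers^k$ via the equidistribution result of \cite{walks}, bound the minimal separating index by the least prime modulo which the reduced homology class survives, and sum the resulting primorial series $1+1+\tfrac23+\tfrac15+\tfrac1{21}+\cdots\approx 2.92$, which is exactly the paper's Lemma \ref{primelem}. You detect non-divisibility in any coordinate whereas the paper fixes the first coordinate, and you make the commutator-subgroup tail and the $N\to\infty$ / $\sum_p$ interchange explicit -- these are modest refinements of the same route rather than a different one. One small inaccuracy worth noting: your rationale for the rank hypothesis $k\geq 4$ in the normal-subgroup aside does not quite close, since pairing the $O(N^3)$ normal-index bound of Theorem \ref{nongamb} with a commutator-probability of order $N^{-k/2}$ gives a tail of order $N^{3-k/2}$, bounded only for $k\geq 6$; this does not affect the theorem as stated, which concerns arbitrary subgroups.
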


\begin{proof}[Proof Sketch]
Theorem \ref{bourabee} together with the results of \cite{walks} reduce the question to the same question, but with $F_k$ replaced by $\mathbb{Z}^k.$ Consider an element $x=(x_1, \dotsc, x_k) \in \mathbb{Z}^k.$ The element $x$ is \emph{not} contained in $p\mathbb{Z} \times \mathbb{Z}^{k-1}$ if $p$ does not divide $x_1.$ The result now follows by the Lemma \ref{primelem} below.
\end{proof}
\begin{lemma}
\label{primelem}
For every $n$ define $p(n)$ to be the smallest prime which does \emph{not} divide $n$. Then the expectation of $p(n)$ over all $n< N$ converges to $c=2.902...$ as $N$ tends to infinity.
\end{lemma}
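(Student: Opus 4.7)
The plan is to partition $\{1, 2, \dotsc, N-1\}$ according to the value of $p(n)$, count each piece by an elementary Chinese Remainder Theorem argument, and then pass to the limit. Enumerate the primes as $p_1 = 2 < p_2 = 3 < p_3 = 5 < \dotsb$. The event ``$p(n) = p_k$'' is exactly the event that each of $p_1, \dotsc, p_{k-1}$ divides $n$ while $p_k$ does not; by the Chinese Remainder Theorem this selects $p_k - 1$ residue classes modulo the primorial $p_1 p_2 \cdots p_k$, giving natural density
$$d_k \;=\; \frac{p_k - 1}{p_1 p_2 \cdots p_k}.$$
A direct count via the floor function gives $|\{ n < N : p(n) = p_k\}| = d_k\, N + O(1)$ with an absolute implicit constant (the $O(1)$ comes from the two floor roundings $\lfloor N/P\rfloor$ and $\lfloor N/(p_k P)\rfloor$, where $P = p_1\cdots p_{k-1}$).

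The candidate limit is therefore
$$c \;:=\; \sum_{k=1}^{\infty} p_k d_k \;=\; \sum_{k=1}^{\infty} \frac{p_k - 1}{p_1 p_2 \cdots p_{k-1}}.$$
This series converges very rapidly: the denominator equals $e^{\vartheta(p_{k-1})}$, so Chebyshev's estimate $\vartheta(x) \gg x$ makes the tail decay faster than any geometric series. Summing the first half-dozen terms yields the numerical value $c \approx 2.902$.

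To identify $\lim_{N\to\infty} \frac{1}{N} \sum_{n<N} p(n)$ with $c$ I would exploit the same primorial bound from the other direction: if some $n < N$ has $p(n) = p_k$, then $p_1 \cdots p_{k-1} \le n < N$, which via Chebyshev forces $p_k \le C\log N$ and $k = O(\log N / \log \log N)$. Writing $K^\ast = K^\ast(N)$ for the largest $k$ that can possibly occur, the identity
$$\frac{1}{N}\sum_{n<N} p(n) \;=\; \frac{1}{N} \sum_{k \le K^\ast} p_k \,|\{ n < N : p(n) = p_k\}| \;=\; \sum_{k \le K^\ast} p_k d_k \;+\; O\!\left(\frac{(\log N)^2}{N}\right)$$
follows by absorbing the $O(1)$ per-class counting errors (at most $K^\ast$ of them, each weighted by at most $p_{K^\ast} = O(\log N)$). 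The error tends to $0$, and the truncated sum tends to $c$ by the rapid convergence established above, so the average converges to $c$.

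The calculation is essentially elementary; the only nontrivial ingredient is Chebyshev's lower bound $\vartheta(x)\gg x$, and it enters only to make the tail rapidly decaying and to bound the number of primes $p_k$ that can possibly arise as $p(n)$ for $n < N$. No serious obstacle is anticipated, the argument being a standard density-and-truncation exercise; the harder task is the purely numerical one of identifying the resulting constant, but the rapid convergence of the series makes even this routine.
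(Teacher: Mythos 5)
Your approach is the same as the paper's -- partition $\{1,\dotsc,N-1\}$ by the value of $p(n)$, compute the natural density of each class via the Chinese Remainder Theorem, and sum -- but your write-up is considerably more careful. The paper simply asserts the class probabilities and the resulting expectation; you supply the justification for the exchange of limit and sum by noting that at most $K^\ast = O(\log N/\log\log N)$ classes are nonempty for $n<N$, that each contributes an $O(1)$ counting error, and that the total error is therefore $O((\log N)^2/N) \to 0$. That truncation step (via Chebyshev's $\vartheta(x)\gg x$) is the genuine mathematical content that the paper leaves implicit, and you were right to spell it out. One concrete issue: your reported numerical value $c\approx 2.902$ reproduces what is evidently a typo in the lemma statement. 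Summing $\sum_k (p_k-1)/(p_1\cdots p_{k-1}) = 1 + 1 + \tfrac{4}{6} + \tfrac{6}{30} + \tfrac{10}{210} + \tfrac{12}{2310} + \dotsb$ gives $c = 2.92005\ldots$, which is the value the paper's own proof records; you should recheck your arithmetic rather than inheriting the misprint.
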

\begin{proof}
For a fixed $,$ the probability that $p(n) = p,$ for some $n<N$ is given by 
\[
\dfrac{p-1}{p}/\prod_{q<p}q,
\]
where the product is over all the primes smaller than $p,$ and so the expectation of $p(n)$ is given by:
\[
\mathbb{E}(p) = \sum_{p}(1-p)/\prod_{q<p}q,
\]
and the latter sum converges rapidly to $2.92005...$ 
\end{proof}
\begin{remark}
A similar result was obtained independently and published (\cite{khalidben}) after this work appeared in preprint form 
\end{remark}
We note that the above argument \emph{does not} work if we replace the words \emph{index of subgroup} by \emph{index of normal subgroup}, since in that case we don't have the necessary control over the commutator subgroup. The bound given by Theorem \ref{expthm} is certainly not good enough -- we need a uniform estimate on the index of the normal subgroup of order  $o(n^k).$

We can get an estimate of the right type as follows:
\begin{theorem}
\label{nongamb}
Let $w$ be represented by a word of length $n$ in $F_2.$ There exists a prime $p\leq c n$ such that $w$ is not in the kernel of the homomorphism of $F_2$ to $\SL_2(\integers/p\integers)$ which maps the generators of $F_2$ to $g=\begin{pmatrix}1&2\\0&1\end{pmatrix}$ and $h=\begin{pmatrix}1&0\\2&1\end{pmatrix}$ respectively.
\end{theorem}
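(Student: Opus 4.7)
The plan is to exploit the classical fact (Sanov's theorem) that the representation $\phi: F_2 \to \SL(2,\integers)$ sending the free generators to $g$ and $h$ is \emph{faithful}; this is proved by a standard ping-pong argument on the half-planes $\{|x|>|y|\}$ and $\{|y|>|x|\}$ in $\reals^2$. Given a word $w$ of length at most $n$ representing a nontrivial element, I would form $W = \phi(w) \in \SL(2,\integers)$; since $\phi$ is injective, $W \neq I$, so the integer matrix $W - I$ has at least one nonzero entry $k$. To prove the theorem it then suffices to exhibit a prime $p \leq cn$ that does not divide this $k$: for any such $p$, the composition $\phi_p: F_2 \to \SL(2,\integers/p\integers)$ obtained by reducing $\phi$ mod $p$ satisfies $\phi_p(w) \neq I$.

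Two elementary estimates close the argument. First, each of $g^{\pm 1}, h^{\pm 1}$ has the property that multiplying by it inflates the maximum-absolute-value entry of any integer matrix by at most a factor of $3$ (the off-diagonal $2$ plus the $1$ on the diagonal). Iterating along a representing word of length $n$ yields $\|W\|_\infty \leq 3^n$, and hence $|k| \leq 3^n + 1$. Second, a nonzero integer $k$ has at most
\[
\omega(k) \;\leq\; (1+o(1))\,\frac{\log|k|}{\log\log|k|}
\]
distinct prime divisors, coming from the primorial lower bound $p_1 p_2 \cdots p_r \geq e^{(1-o(1))\, r \log r}$. Combining these estimates gives $\omega(k) = O(n/\log n)$. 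By the prime number theorem $\pi(cn) \sim cn/\log n$, so for any constant $c$ strictly greater than $\log 3$ and all sufficiently large $n$, the primes up to $cn$ outnumber the prime divisors of $k$, and at least one such prime fails to divide $k$. Small $n$ can be absorbed into the constant $c$.

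The hardest point—and really the only place where a non-trivial estimate is needed—is the use of the sharp bound $\omega(k) = O(\log|k|/\log\log|k|)$: the naive estimate $\omega(k) \leq \log_2|k|$ would only deliver $p = O(n\log n)$, missing the linear bound by a $\log$ factor. Sanov's theorem and the PNT are standard black boxes, and the entry-growth estimate is a one-line induction, so after these three pieces are in place the proof is essentially a comparison of two asymptotic rates.
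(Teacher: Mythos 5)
Your argument is correct and follows essentially the same route as the paper's: faithfulness of the Sanov representation (the paper invokes the freeness of the principal congruence subgroup $\Gamma(2)$ rather than ping-pong, but that is the same fact), an exponential bound on the entries of $\phi(w)$ (you correctly get $3^n$; the paper writes $2^n$, a slip that does not affect the argument since the constant $c$ is unspecified), and a prime-number-theorem estimate to produce a modulus $p \leq cn$ in which $W \not\equiv I$. The only cosmetic difference is the finish: the paper compares a nonzero entry $k$ of $W-I$ directly to the primorial $\prod_{p \leq cn} p \sim e^{cn}$ (if every prime up to $cn$ divided $k$ then $|k| \geq e^{cn(1+o(1))} > 3^n$), while you route through the Hardy--Ramanujan bound $\omega(k) = O(\log|k|/\log\log|k|)$ and compare with $\pi(cn)$ --- these are logically equivalent restatements of the same Chebyshev/PNT estimate, and both yield the linear bound with the same threshold $c > \log 3$.
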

\begin{proof}
First, consider $g$ and $h$ to be elements of $\SL_2(\integers).$ The matrix given by the word $w(g, h)$ has entries no larger than $2^n$ in absolute value. By the Chinese Remainder Theorem, if $p_1, \dotsc, p_k$ are primes whose product exceeds $2^n,$ any such matrix which is the identity modulo all of the $p_i$ is, in fact, the identity matrix, and thus the word $w$ is the trivial element of $F_2$ (since $g$ and $h$ lie in the principal congruence subgroup of level $2$ in the modular group, and that principal congruence subgroup is free). By the prime number theorem, the product of all the primes not exceeding $m$ is asymptotic to $e^m$ for $m$ large, whence the result.
\end{proof}

\subsection{Residual finiteness and the Lower Central Series}
The method of proof of Theorem \ref{nongamb} is quite suggestive. For example, consider a word $w$ of length $n$ in $F_2=\langle a, b\rangle $ where the sum of the exponents of all the terms is \emph{not} zero. Then, by mapping both $a$ and $b$ to $g=\begin{pmatrix}1 & 1\\0 & 1\end{pmatrix}$ we see that the elements of $w(g, g)$ are no bigger than $n,$ and so such a word is not in the kernel of a homomorphism of $F_2$ into a group of order at most $\log n.$ If the exponent sum of $w$ is zero, but the individual exponent sums of $a$ and $b$ are not, we can modify the construction by sending $a$ to $g$ (as above) but sending $b$ to $g^2.$ This will give us the same result up to an additive constant. If the individual exponent sums are zero, the method fails, but the element $w$ is in the commutator subgroup (so $w$ is in the kernel of every homomorphism of $F_2$ to an abelian group). However, we can replace the abelian group by a $2$-step nilpotent group of ($3\times 3$) unipotent matrices. Using Lemma \ref{unipot}, we see  that  $w$ is in the second level of the lower central series of $F_2,$ it is not in the kernel of a homomorphism to a group of order $\log^3(n),$ where $n$ is the length of $w,$ and so on. We thus obtain the following statement which we believe sharp:
\begin{theorem}
\label{lowerng}
For every element $w$ of length $n$ at the $k$-th level in the lower central series of $F_2,$ there is a normal subgroup $H(w)$ of index $O(\log^{k^(k+1)/2}(n))$ which does not contain $w$ -- the implied constant in the big-$O$ notation depends on $k.$ The subgroup $H(w)$ is the kernel of a homomorphism onto a $k$-step nilpotent subgroup represented by $(k+1)\times (k+1)$ unipotent matrices.
\end{theorem}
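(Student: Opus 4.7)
The plan is to mimic the Chinese Remainder Theorem strategy of Theorem \ref{nongamb}, but with the $\SL_2(\integers)$ representation replaced by a faithful unipotent representation that detects the $k$-step nilpotent quotient $F_2/\gamma_{k+1}(F_2)$. Throughout, $k$ is fixed and all constants are allowed to depend on $k$.

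First, I would produce a homomorphism $\rho\colon F_2 \to U_{k+1}(\integers)$ into the group of $(k+1) \times (k+1)$ unipotent upper triangular integer matrices, whose kernel is exactly $\gamma_{k+1}(F_2)$. Concretely, set $\rho(a) = I + N_a$ and $\rho(b) = I + N_b$ for suitable strictly upper triangular $N_a, N_b$ (a truncation of the Magnus embedding $a \mapsto 1 + x$, $b \mapsto 1 + y$ of $F_2$ into $\integers\langle\langle x, y\rangle\rangle$, collapsing monomials of degree exceeding $k$ into matrix entries in the natural way). Lemma \ref{unipot} is then exactly the statement that elements of $\gamma_k(F_2) \setminus \gamma_{k+1}(F_2)$ survive under $\rho$, so $\rho(w)$ is a nontrivial unipotent integer matrix.

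Second, I would bound the entries of $\rho(w)$. Expanding the product of $n$ factors of the form $I \pm N_a, I \pm N_b$ (allowing inverses via $\rho(a)^{-1} = \sum_{j \geq 0} (-N_a)^j$, which is a finite sum because $N_a^{k+1} = 0$), the entry in position $(i, j)$ of $\rho(w)$ is a signed count of monotone ``jumps'' of total height $j - i \leq k$ distributed among the $n$ letters of $w$. Each such entry is therefore an integer of absolute value at most $C_k n^k$, where $C_k$ depends only on $k$.

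Third, since $\rho(w) \neq I$, some off-diagonal entry is a nonzero integer of absolute value $\leq C_k n^k$. By the prime number theorem $\prod_{p \leq m} p \sim e^m$, so for $m = C'_k \log n$ with $C'_k$ sufficiently large, the product of all primes up to $m$ exceeds $C_k n^k$. The Chinese Remainder Theorem then forces the existence of a prime $p \leq m = O(\log n)$ with $\rho(w) \not\equiv I \pmod{p}$. Composing $\rho$ with reduction mod $p$ yields a homomorphism $\bar\rho\colon F_2 \to U_{k+1}(\ffp{p})$ whose image has order at most $|U_{k+1}(\ffp{p})| = p^{k(k+1)/2} = O(\log^{k(k+1)/2} n)$, and whose kernel $H(w) := \ker \bar\rho$ is the desired normal subgroup not containing $w$.

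The main obstacle is the first step: verifying that the $(k+1)$-dimensional unipotent representation faithfully captures $F_2$ modulo $\gamma_{k+1}$, so that level-$k$ elements are not lost. This is the classical theorem of Magnus identifying the associated graded of the lower central series of $F_2$ with the Lie algebra sitting inside $\integers\langle\langle x, y\rangle\rangle$, and underlies Lemma \ref{unipot}; the other two steps are the routine unipotent entry estimate and the CRT argument already used in Theorem \ref{nongamb}.
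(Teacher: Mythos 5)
Your overall plan --- bound the entries of a unipotent representation of $w$ by $O(n^k)$ via Lemma~\ref{unipot}, then use the prime number theorem and CRT to locate a prime $p = O(\log n)$ at which the reduction is still nontrivial, and observe that $|U_{k+1}(\ffp p)| = p^{k(k+1)/2}$ --- is exactly the route the paper takes. Steps two and three of your sketch are correct and match the argument sketched in the text preceding the theorem.

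The gap is in your first step. You assert a single homomorphism $\rho\colon F_2 \to U_{k+1}(\integers)$ with $\ker\rho = \gamma_{k+1}(F_2)$, i.e.\ a faithful $(k+1)$-dimensional unipotent representation of the free class-$k$ nilpotent group on two generators. This fails for $k=1$ and every $k\geq 3$. Indeed, for \emph{any} homomorphism $\rho\colon F_2 \to U_{k+1}(\integers)$, the image of $\gamma_k(F_2)$ lies in $\gamma_k(U_{k+1}(\integers))$, which is the rank-one group $\{I + tE_{1,k+1}\}$; so the induced map $\gamma_k(F_2)/\gamma_{k+1}(F_2) \to \gamma_k(U_{k+1}(\integers))$ has rank-one image, whereas by Witt's formula the source has rank $r_k = \tfrac1k\sum_{d\mid k}\mu(d)2^{k/d}$, and $r_k \geq 2$ for $k=1$ and all $k\geq 3$ (e.g.\ $r_3=2$, $r_4=3$). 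Hence $\rho$ must kill some level-$k$ element. Your attribution to Magnus doesn't rescue this: Magnus's theorem gives faithfulness of the map into $\integers\langle\langle x,y\rangle\rangle/I^{k+1}$, but the left regular representation of that truncated algebra is $(2^{k+1}-1)$-dimensional, not $(k+1)$-dimensional, and using it would blow the exponent up from $k(k+1)/2$ to roughly $(2^{k+1}-1)(2^{k+1}-2)/2$.

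The correct (and intended) formulation is that $\rho$ must be chosen \emph{depending on} $w$ --- which is consistent with how the theorem is actually stated (``$H(w)$ is the kernel of \emph{a} homomorphism\ldots''). Concretely: since $w \in \gamma_k \setminus \gamma_{k+1}$, its Magnus expansion $1 + (\text{degree} \geq k)$ has some degree-$k$ monomial $z_1 z_2 \cdots z_k$ (each $z_i \in \{x,y\}$) with nonzero integer coefficient $c_w$. Define $\rho_w(a) = I + \sum_{i\,:\, z_i = x} E_{i,i+1}$ and $\rho_w(b) = I + \sum_{i\,:\, z_i = y} E_{i,i+1}$. Then the $(1,k+1)$-entry of $\rho_w(w)$ is exactly $c_w \neq 0$, Lemma~\ref{unipot} bounds it by $O(n^k)$, and your CRT/PNT step goes through unchanged. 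With this repair, your proof agrees with the paper's.
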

\begin{remark}
Since $F_n$ is a finite index subgroup of $F_2$ (for any $n\geq 2$) we could have replaced $F_2$ by $F_n$ in the statement of Theorem \ref{lowerng}.
\end{remark}
We have used the following lemma:
\begin{lemma}
\label{unipot}
Let $m_1, \dotsc, m_k$ be $n\times n$ unipotent matrices. Let $M=w(m_1, \dotsc, m_k),$ where $w$ is a word of length $m.$ Then, the entries of $M$ grow no faster than $O(m^{n-1}),$ where the implicit constant in the $O$-notation depends only on the matrices $m_1, \dotsc, m_k.$
\end{lemma}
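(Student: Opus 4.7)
The plan is to expand $M$ as a sum over products of nilpotent perturbations and then exploit the nilpotent filtration.

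First I would observe that the lemma must be read under the standing assumption that $m_1, \dotsc, m_k$ generate a unipotent group --- this is the setting in which it is invoked in Theorem~\ref{lowerng}. (Without such an assumption the statement is actually false: the two individually unipotent matrices $\bigl(\begin{smallmatrix}1&1\\0&1\end{smallmatrix}\bigr)$ and $\bigl(\begin{smallmatrix}1&0\\1&1\end{smallmatrix}\bigr)$ generate a group whose alternating words exhibit Fibonacci-type, hence exponential, entry growth.) By the Lie--Kolchin theorem, a unipotent subgroup of $\GL_n$ can be simultaneously conjugated into the group of upper triangular matrices with $1$'s on the diagonal, and conjugation by a fixed matrix only affects the implicit constant. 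So I may assume each $m_i = I + N_i$ with $N_i$ strictly upper triangular, and since $N_i^n = 0$, also $m_i^{-1} = I + \sum_{j=1}^{n-1}(-N_i)^j$, which is of the same form with entries bounded by a constant depending only on $m_i$.

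Reading the word $w$ letter by letter, I therefore obtain
\[
M = \prod_{j=1}^{m}(I + A_j),
\]
where each $A_j$ is strictly upper triangular and $\max_{p,q}|(A_j)_{pq}| \leq C$ for some constant $C = C(m_1,\dotsc,m_k)$. Distributing,
\[
M = \sum_{S \subseteq \{1,\dotsc,m\}} A_S, \qquad A_S = A_{j_1} A_{j_2} \cdots A_{j_s},
\]
where $j_1 < \dotsb < j_s$ are the elements of $S$ in increasing order (and $A_\emptyset = I$). The key structural point is that products of strictly upper triangular matrices live in a nilpotent filtration: a product of $s$ such $n\times n$ matrices has nonzero entries only on superdiagonals $\geq s$, so any product of $n$ or more of them vanishes. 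Hence only subsets $S$ with $|S| = s \leq n-1$ contribute.

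Finally I would bound the surviving terms. A routine induction on $s$ gives $\max_{p,q}|(A_S)_{pq}| \leq n^{s-1} C^s$ whenever $|S| = s$, and there are $\binom{m}{s} = O(m^s)$ subsets of each size. Summing over $s = 0, 1, \dotsc, n-1$, the dominant contribution comes from $s = n-1$ and yields a total bound of $O(m^{n-1})$, with the implicit constant depending only on $m_1, \dotsc, m_k$. The only genuine subtlety is the initial reduction to the upper triangular setting via Lie--Kolchin; everything after that amounts to bookkeeping in the nilpotent Lie algebra.
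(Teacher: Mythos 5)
Your proof follows essentially the same route as the paper's: write $m_i = I + m_i'$ with $m_i'$ nilpotent, expand $w(m_1,\dotsc,m_k)$ multiplicatively as a sum of $j$-fold products of the $m_i'$, observe that products of length $\geq n$ vanish, and bound the $\binom{m}{j} = O(m^j)$ surviving terms for each $j \leq n-1$. That said, you add rigor at two points where the paper is silent or imprecise. First, your counterexample (the two $2\times 2$ unipotents $\bigl(\begin{smallmatrix}1&1\\0&1\end{smallmatrix}\bigr)$, $\bigl(\begin{smallmatrix}1&0\\1&1\end{smallmatrix}\bigr)$, whose alternating words grow exponentially) shows that the lemma as literally stated is false: one needs the $m_i$ to lie in a common unipotent subgroup, equivalently that the $m_i'$ generate a nilpotent associative ideal, which is exactly what the paper asserts without justification; invoking Lie--Kolchin (or Engel/Levitzki) to simultaneously triangularize is the clean way to see why this holds in the intended application. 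Second, you handle inverses explicitly, noting $m_i^{-1} = I + \sum_{j\geq 1}(-N_i)^j$ remains in the same nilpotent algebra with entries bounded by a constant depending only on the $m_i$; this is needed because a word in a group involves the $m_i^{-1}$, and the paper's expansion does not address it. You also correctly write the binomial count with $m$ in the top slot; the paper's ``$\binom{n}{j} = O(n^j)$'' is a typo for $\binom{m}{j} = O(m^j)$, without which the claimed $O(m^{n-1})$ bound does not follow.
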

\begin{proof}
Write each $m_i$ as $m_i = I_n + m_i^\prime.$ The matrices $m_1^\prime \dotsc, m_k^{\prime}$ generate a nilpotent ideal $\mathcal{I},$ where $\mathcal{I}^n=0.$ This means that $w(m_1, \dotsc, m_k)$ can be written as sum of $j$-fold products of $m_i^\prime,$  where $j$ ranges between $0$ and $n-1.$ The number of $j$-fold products is at most $\binom{n}{j} = O(n^j),$ whence the result.
\end{proof}
The remaining question, then, is: how deeply in the lower central series can an element of word-length $n$ be? It is clear that a word of length $n$ cannot have depth greater than $n,$ so $k=O(n).$ It has been proved by J. Malestein and A. Putman in \cite{maleput} that $k=\Omega(\sqrt{n}).$
\subsection{Surface Groups}
\label{surfsec}
The first observation is that the proof of Theorem \ref{nongamb} does not depend on the fact that the group is free, but only on the existence of representations of the group into $\SL(2, \mathcal(O)),$ where $\mathcal(O)$ is the ring of integers in some algebraic number field -- instead of the prime number theorem we then use Landau's Prime Ideal Theorem (see, for example, \cite{montvaughan}) 
%Give the theorem ref from montgomery/vaughan
to get exactly the same estimate. To show that every surface group admits such a representation we use a (stronger) observation of C. Machlachlan and A. Reid (see \cite{MacReidCanJ}):
\begin{observation}
\label{picardobs}
The \emph{Picard Modular Group} -- $\PSL(2, \integers[i])$ -- contains the fundamental group of the compact surface of genus $2,$ and thus the fundamental group of every compact surface.
\end{observation}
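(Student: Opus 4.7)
The plan is to produce a surface subgroup of genus exactly two inside $\PSL(2,\mathbb{Z}[i])$; from this the full statement follows, since for every $h\geq 2$ the surface $\Sigma_h$ admits a degree-$(h-1)$ cover to $\Sigma_2$, giving inclusions $\pi_1(\Sigma_h)\hookrightarrow\pi_1(\Sigma_2)\hookrightarrow\PSL(2,\mathbb{Z}[i])$, while $\pi_1(T^2)=\mathbb{Z}^2$ embeds directly via the parabolic subgroup $\left\{\left(\begin{smallmatrix}1 & \alpha\\ 0 & 1\end{smallmatrix}\right):\alpha\in\mathbb{Z}[i]\right\}$ and $\pi_1(S^2)$ is trivial.

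The genus-two subgroup itself I would construct arithmetically via a quaternion algebra. Choose a quaternion division algebra $B$ over $\mathbb{Q}$ which is ramified at the archimedean place and at some finite rational primes $p\equiv 1\pmod 4$. Ramification at $\infty$ gives $B\otimes_{\mathbb{Q}}\reals\cong\mathbb{H}$, which forces the norm-one units $\mathcal{O}_B^1$ of any maximal order $\mathcal{O}_B\subset B$ to form a cocompact arithmetic lattice in the relevant semisimple group. The condition $p\equiv 1\pmod 4$ means that every finite ramified prime of $B$ splits in $\mathbb{Q}(i)/\mathbb{Q}$, so that $B\otimes_{\mathbb{Q}}\mathbb{Q}(i)$ is split at every place of $\mathbb{Q}(i)$ and hence isomorphic to $M_2(\mathbb{Q}(i))$. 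Fixing such an isomorphism $\iota$, the image $\iota(\mathcal{O}_B^1)\subset\SL(2,\mathbb{Q}(i))$ is a cocompact Fuchsian group stabilising a totally geodesic $\mathbb{H}^2\hookrightarrow\mathbb{H}^3$ -- the fixed set of an antiholomorphic involution arising from the nontrivial element of $\Gal(\mathbb{Q}(i)/\mathbb{Q})$ transported through $\iota$. After conjugating inside $\GL(2,\mathbb{Q}(i))$ and intersecting with $\SL(2,\mathbb{Z}[i])$, we obtain a cocompact Fuchsian subgroup of $\SL(2,\mathbb{Z}[i])$ up to finite index.

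Applying Selberg's lemma yields a torsion-free finite-index subgroup $\Gamma$ of this Fuchsian group, which must be the fundamental group of a closed orientable hyperbolic surface of some genus $g\geq 2$. The main obstacle, and the genuinely difficult step, is pinning $g$ down to exactly two: Selberg's lemma alone gives no control on the genus, and passing to arbitrary congruence subgroups only increases it. To achieve $g=2$ one must compute the hyperbolic area of $\Gamma\backslash\mathbb{H}^2$ explicitly (for instance via the Jacquet--Langlands area formula for orders in $B$) and then choose $B$ and a principal congruence refinement so as to realise Euler characteristic $-2$ -- this is precisely the analysis carried out by Maclachlan and Reid in \cite{MacReidCanJ}, which I would cite directly for this final step rather than reprove.
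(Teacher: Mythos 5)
The paper offers no independent proof of this observation---it simply cites Maclachlan and Reid \cite{MacReidCanJ}---and your reduction of the general case to genus two (finite covers for genus $h>2$, the parabolic subgroup $\left\{\left(\begin{smallmatrix}1&\alpha\\0&1\end{smallmatrix}\right):\alpha\in\mathbb{Z}[i]\right\}$ for the torus, triviality for the sphere) is correct and is presumably what the ``thus'' in the statement is pointing at. Your quaternion-algebra sketch, however, has the arithmetic exactly backwards in two places, and each error by itself is fatal to the construction. First, you need $B$ to be \emph{split} at the archimedean place, not ramified there: if $B\otimes_{\mathbb{Q}}\mathbb{R}\cong\mathbb{H}$ then $B$ is definite, $\mathcal{O}_B^1$ sits as a discrete subgroup of the compact group $\mathrm{SU}(2)$, and is therefore a \emph{finite} group---a ``cocompact lattice in the relevant group'' only in a vacuous sense, and certainly not a Fuchsian surface group. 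To get a lattice acting on $\mathbb{H}^2$ you need $B$ indefinite, i.e.\ $B\otimes_{\mathbb{Q}}\mathbb{R}\cong M_2(\mathbb{R})$, and cocompactness then comes from $B$ being a division algebra, hence ramified at an even number ($\geq 2$) of \emph{finite} primes.

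Second, your congruence condition is also reversed. If $p\equiv 1\pmod 4$ then $p$ splits in $\mathbb{Q}(i)$, so for either place $w$ of $\mathbb{Q}(i)$ above $p$ one has $\mathbb{Q}(i)_w\cong\mathbb{Q}_p$, and consequently $\bigl(B\otimes_{\mathbb{Q}}\mathbb{Q}(i)\bigr)\otimes\mathbb{Q}(i)_w\cong B\otimes_{\mathbb{Q}}\mathbb{Q}_p$, which is \emph{still} the local division algebra; thus $B\otimes_{\mathbb{Q}}\mathbb{Q}(i)$ is ramified at $w$, does not split globally, and the isomorphism $\iota$ on which the whole embedding rests does not exist. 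The condition you actually want is that every finite prime ramifying $B$ be \emph{non-split} in $\mathbb{Q}(i)$, i.e.\ $p=2$ or $p\equiv 3\pmod 4$, because any quadratic extension of $\mathbb{Q}_p$ splits the local division quaternion algebra. With both corrections in place---$B$ indefinite over $\mathbb{Q}$, ramified at an even set of finite primes each inert or ramified in $\mathbb{Q}(i)$---your outline does match the actual Maclachlan--Reid construction of cocompact Fuchsian subgroups of Bianchi groups, and the genuinely delicate step of forcing the genus to be exactly two is, as you correctly say, the content of \cite{MacReidCanJ}.
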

This gives us the following version of Theorem \ref{nongamb}:
\begin{theorem}
\label{nongambsurf}
Let $w$ be represented by a word of length $n$ in  the fundamental group $\Gamma_g$ of the surface of genus $g.$ There exists a normal subgroup of order $O(n^3)$ which does not contain $w.$ \end{theorem}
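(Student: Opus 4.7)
The plan is to run the argument of Theorem \ref{nongamb} essentially verbatim, with $F_2$ replaced by $\Gamma_g$, the rational integers replaced by the Gaussian integers $\integers[i]$, and the Prime Number Theorem replaced by Landau's Prime Ideal Theorem, exploiting the discrete faithful representation supplied by Observation \ref{picardobs}.

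First I would fix a finite generating set for $\Gamma_g$ and, using the embedding $\Gamma_g \hookrightarrow \PSL(2,\integers[i])$ of Observation \ref{picardobs}, choose an $\SL(2,\integers[i])$ lift for each generator and its inverse. This produces a constant $D$ so that for any word $w$ of length $n$ the resulting matrix $M_w \in \SL(2,\integers[i])$ has all four entries of Gaussian-integer modulus at most $D^n$. Nontriviality of $w$ in $\Gamma_g$ translates, under the faithful representation into $\PSL$, to $M_w \ne \pm I$ in $\SL(2,\integers[i])$.

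Next I would seek a prime ideal $\mathfrak{p}$ of $\integers[i]$ of small norm with $M_w \not\equiv \pm I \pmod{\mathfrak{p}}$. The one point at which the rational-integer argument needs a genuine modification is that we live in $\PSL$ rather than $\SL$, so we must rule out reduction to both $+I$ and $-I$ simultaneously. I would do this by picking any nonzero entry $a$ of $M_w - I$ and any nonzero entry $b$ of $M_w + I$ and asking that $\mathfrak{p}$ divide neither. Since the Gaussian norm of $ab$ is bounded by a constant raised to $n$, the sum $\sum_{\mathfrak{p}\text{ divides }ab}\log N(\mathfrak{p})$ is $O(n)$. On the other hand, Landau's Prime Ideal Theorem (cf.~\cite{montvaughan}) gives $\sum_{N(\mathfrak{p})\le x}\log N(\mathfrak{p}) \sim x$, and comparing these two estimates produces an admissible $\mathfrak{p}$ with $N(\mathfrak{p}) = O(n)$.

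The conclusion is then immediate: the composition $\Gamma_g \hookrightarrow \PSL(2,\integers[i]) \to \PSL(2,\integers[i]/\mathfrak{p})$ has image in a group of order proportional to $N(\mathfrak{p})(N(\mathfrak{p})^2 - 1) = O(n^3)$, and $w$ is not in its kernel by the choice of $\mathfrak{p}$; the kernel is the desired normal subgroup. The main obstacle is really just the $\PSL$-versus-$\SL$ bookkeeping in the middle step, combined with remembering that one needs the ideal-theoretic version of the prime number theorem to get the correct power of $n$. No new ideas beyond those already visible in the proof of Theorem \ref{nongamb} appear to be required.
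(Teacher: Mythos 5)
Your proposal is correct and takes essentially the same approach as the paper: embed $\Gamma_g$ into $\PSL(2,\integers[i])$ via Observation~\ref{picardobs}, bound the entry sizes of the word matrix exponentially in $n$, and invoke Landau's Prime Ideal Theorem in place of the rational Prime Number Theorem to find a prime ideal of norm $O(n)$ avoiding the reduction. Your explicit handling of the $\PSL$-versus-$\SL$ issue (choosing matrix lifts of generators and ruling out $M_w \equiv \pm I$ by avoiding divisors of a nonzero entry of each of $M_w - I$ and $M_w + I$) is a clarification the paper's terse sketch leaves implicit.
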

The arithmetic method does not (at least not obviously) extend the proofs of Theorems \ref{lowerng} and \ref{expthm} to the case of surface groups. To extend these results we note that we need only extend these results to the fundamental group $\Gamma_2$ of the surface of genus $2.$ This is so because of the following observations:
\begin{observation}
\label{surfacefinite}
Every surface group $\Gamma_g$ s a subgroup of finite index of $\Gamma_2$ (in multiple ways, but we pick a fixed (for example, cyclic) covering for each $g$).
\end{observation}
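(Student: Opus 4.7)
The plan is to realize the closed oriented surface $\Sigma_g$ of genus $g\geq 2$ as an unbranched finite cover of $\Sigma_2$, so that $\Gamma_g = \pi_1(\Sigma_g)$ embeds canonically as a finite-index (in fact normal) subgroup of $\Gamma_2$ via the map induced by the covering. The low-genus cases $g=0,1$ lie outside the observation's effective scope, since the trivial group and $\mathbb{Z}^2$ cannot embed with finite index in the non-elementary Gromov hyperbolic group $\Gamma_2$; so we focus on $g\geq 2$.

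First I would construct, for each integer $d\geq 1$, a connected cyclic $d$-fold cover of $\Sigma_2$. The abelianization of $\Gamma_2$ is $H_1(\Sigma_2;\integers)\cong \integers^4$, so composing the abelianization map with projection onto any one $\integers$-summand followed by reduction modulo $d$ yields a surjection $\phi\colon \Gamma_2 \to \integers/d\integers$. By the standard Galois correspondence for covering spaces, $N:=\ker\phi$ is the fundamental group of a connected regular $d$-fold cover $\pi\colon\widetilde\Sigma\to \Sigma_2$ whose deck group is cyclic of order $d$, and $\widetilde\Sigma$ is automatically closed and orientable as a cover of a closed orientable surface.

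Next I would identify $\widetilde\Sigma$ with $\Sigma_g$ by an Euler characteristic computation. Since $\pi$ is unbranched of degree $d$, one has $\chi(\widetilde\Sigma)=d\cdot\chi(\Sigma_2)=-2d$. The classification of closed orientable surfaces then forces the genus of $\widetilde\Sigma$ to equal $d+1$, so the choice $d=g-1$ yields $\widetilde\Sigma\cong \Sigma_g$ and therefore $\Gamma_g\cong N<\Gamma_2$ with index $g-1$. The parenthetical remark in the observation -- that one can fix a single (cyclic) covering for each $g$ once and for all -- is encoded exactly in this canonical choice of $N$.

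The argument is classical and the only point requiring care is that $\phi$ be genuinely onto $\integers/d\integers$; if $\phi$ were not surjective, the associated cover would be disconnected, and $\widetilde\Sigma$ would be a disjoint union of smaller-genus surfaces (so $N$ would fail to be a surface group). This subtlety is neutralized by factoring through the full rank-$4$ abelianization, where surjectivity onto any finite cyclic quotient is immediate. So there is no genuine obstacle; the entire statement reduces to the standard existence of cyclic covers of closed surfaces together with Riemann--Hurwitz (multiplicativity of $\chi$).
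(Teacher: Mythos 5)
Your argument is correct and matches exactly what the paper has in mind: the paper states this as an observation without proof, and its parenthetical ``(for example, cyclic) covering'' points precisely to the cyclic-cover construction you give. The Euler characteristic count $\chi(\widetilde\Sigma)=d\cdot\chi(\Sigma_2)=-2d$, hence genus $d+1$, so that $d=g-1$ realizes $\Gamma_g$ as an index-$(g-1)$ normal subgroup of $\Gamma_2$, is the standard and intended argument; your remark on connectedness (surjectivity of $\phi$) and on excluding $g=0,1$ is appropriate.
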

\begin{observation}
\label{distobs}
By word-hyperbolicity, there exists a constant $c,$ such that every word of lengh $l$ in $\Gamma_g$ has word length between $c l$ and $l /c $ in $\Gamma_2.$ In fact, it is not hard to show that for the cyclic cover, the constant $c$ can be taken to be $g-1$ -- it can be shown that with a more judicious choice of covering this can be improved to $O(]\log g).$
\end{observation}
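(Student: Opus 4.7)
The plan is to treat the three assertions in turn. For the existence of the quasi-isometry constant $c$, I would invoke the Milnor--\v{S}varc lemma: both $\Gamma_2$ and $\Gamma_g$ act properly and cocompactly by isometries on $\mathbb{H}^2$, so each is quasi-isometric to $\mathbb{H}^2$ with respect to any word metric. Since $\Gamma_g$ has finite index in $\Gamma_2$, the inclusion is automatically a quasi-isometry, which yields the two-sided comparison of word lengths up to multiplicative constants (the additive error from Milnor--\v{S}varc can be absorbed into the multiplicative one using word-hyperbolicity).

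To make the constant effective, the main tool is the Reidemeister--Schreier rewriting process. Fix a generating set $S$ of $\Gamma_2$ and let $D$ denote the diameter of the Schreier coset graph $\mathrm{Sch}(\Gamma_2/\Gamma_g,\,S)$. Reidemeister--Schreier produces a generating set $T$ for $\Gamma_g$ whose elements have $S$-length at most $2D+1$, and a direct rewriting of a word in $S$ representing an element of $\Gamma_g$ shows
\[
|w|_T \;\leq\; |w|_S \;\leq\; (2D+1)\,|w|_T
\]
for every $w \in \Gamma_g$. Translating between $T$ and the standard $2g$-generator set of $\Gamma_g$ changes $c$ only by a further constant depending on $g$ alone, so the problem reduces to bounding $D$.

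For the cyclic cover of degree $g-1$, the subgroup $\Gamma_g$ is the kernel of a homomorphism $\Gamma_2 \twoheadrightarrow \integers/(g-1)\integers$ sending one standard generator to $1$ and the others to $0$. The Schreier graph is then a cycle of length $g-1$ with diameter $\lfloor (g-1)/2 \rfloor$, yielding $c = g-1$ directly.

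For the logarithmic improvement, the strategy is to replace the cyclic quotient by one whose Cayley graph is an expander. Since any connected graph on $g-1$ vertices of bounded valence has diameter at least $\Omega(\log g)$, the bound $O(\log g)$ is optimal. To realize it I would exploit the embedding $\Gamma_2 \hookrightarrow \PSL(2,\integers[i])$ from Observation \ref{picardobs} to obtain congruence quotients $\PSL(2,\mathcal{O}/\mathfrak{p})$ of $\Gamma_2$, and invoke the Lubotzky--Phillips--Sarnak construction cited in the proof of Theorem \ref{expthm} to produce symmetric generating sets whose Cayley-graph diameter is $O(\log|\PSL(2,\mathcal{O}/\mathfrak{p})|)$. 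The hard part will be arithmetic: forcing the resulting subgroup of $\Gamma_2$ to have index \emph{exactly} $g-1$, rather than only within a constant factor, which requires choosing the prime so that $g-1$ divides the order of the quotient and then passing to a carefully chosen intermediate subgroup, while verifying that the induced Schreier graph on the $g-1$ cosets retains logarithmic diameter.
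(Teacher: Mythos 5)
The paper provides no proof of this Observation (it is stated as a remark without justification), so there is nothing to compare your argument against directly. Your plan is reasonable in outline but has two concrete gaps. First, the claim that the existence of the quasi-isometry constant follows from Milnor--\v{S}varc is fine, and the Reidemeister--Schreier inequality $|w|_T \leq |w|_S \leq (2D+1)|w|_T$ together with the cycle-graph computation for the cyclic cover is correct as far as it goes. But the sentence ``translating between $T$ and the standard $2g$-generator set of $\Gamma_g$ changes $c$ only by a further constant depending on $g$ alone'' is circular: the entire task is to bound the $g$-dependence of the distortion, so if that translation constant could itself grow wildly in $g$ (say exponentially), the bound $c \approx g-1$ you derived for the $T$-generators would say nothing about the standard ones. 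You either need to bound that translation constant explicitly, or state clearly that the claimed $g-1$ (and later $O(\log g)$) refer to the Schreier generating set, not the standard handle generators.

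Second, the $O(\log g)$ claim is not actually established by your argument, and you acknowledge this yourself. The LPS/$\PSL(2,q)$ route produces quotients of order $\sim q^3$, and to get a subgroup of $\Gamma_2$ of index \emph{exactly} $g-1$ you would have to pass to an intermediate, generally non-normal, subgroup of a congruence quotient. The Schreier graph of such an intermediate subgroup is \emph{not} the Cayley graph of the quotient, and the girth and expansion guarantees of LPS simply do not transfer: the Schreier graph on $g-1$ cosets can have much worse diameter than $O(\log g)$ even when the ambient Cayley graph is an expander. Without a separate argument controlling the diameter of that Schreier graph, the ``hard part'' you flag remains an unresolved gap, not a technical detail. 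One would need either a direct construction (e.g.\ iterated small-degree covers of $\Sigma_2$ whose Schreier graphs compose with controlled diameter, which itself requires care since not every degree decomposes usefully), or an appeal to a result guaranteeing expanding Schreier coset graphs of surface groups in every degree --- neither of which is in your proposal.
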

\begin{observation}
\label{surfacelcs}
If $g \in \Gamma_g$ lies at the $k$-th level of the lower central series of $\Gamma_g,$  then it lies in \emph{at most} the $k$-th level of the lower central series of $\Gamma_2.$ This is a more-or-less immediate corollary of the definition.
\end{observation}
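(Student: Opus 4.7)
The plan is a direct induction on $k$, exploiting the general monotonicity fact that for any subgroup inclusion $H \leq G$ of groups, the lower central series respects the inclusion: $\gamma_k(H) \subseteq \gamma_k(G)$ for every $k \geq 1$, where I use the standard recursion $\gamma_1(G) = G$ and $\gamma_{j+1}(G) = [G, \gamma_j(G)]$. Applied to the embedding $\Gamma_g \hookrightarrow \Gamma_2$ supplied by Observation \ref{surfacefinite}, this immediately yields the desired conclusion: if $g \in \gamma_k(\Gamma_g)$, then automatically $g \in \gamma_k(\Gamma_2)$, so the level of $g$ in the LCS of the ambient surface group is governed by its level in the subgroup.

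For the induction, the base case $\gamma_1(\Gamma_g) = \Gamma_g \subseteq \Gamma_2 = \gamma_1(\Gamma_2)$ is exactly the subgroup inclusion. For the step, assume $\gamma_j(\Gamma_g) \subseteq \gamma_j(\Gamma_2)$. Every generator of $\gamma_{j+1}(\Gamma_g)$ has the form $[h,x]$ with $h \in \Gamma_g$ and $x \in \gamma_j(\Gamma_g)$. By the inductive hypothesis and the inclusion $\Gamma_g \subseteq \Gamma_2$, we have $h \in \Gamma_2$ and $x \in \gamma_j(\Gamma_2)$, so $[h,x] \in [\Gamma_2, \gamma_j(\Gamma_2)] = \gamma_{j+1}(\Gamma_2)$. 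Taking arbitrary products and conjugates (which $\gamma_{j+1}(\Gamma_2)$ is closed under, being a normal subgroup) completes the step. Specializing to $j = k-1$ gives the observation.

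There is essentially no obstacle: monotonicity of the lower central series under subgroup inclusion is a purely formal consequence of the recursive definition, so the author's parenthetical that this is ``a more-or-less immediate corollary of the definition'' is entirely apt. The point of isolating the observation is not the depth of its proof but its role in Section \ref{surfsec}: it guarantees that any element deep in the LCS of $\Gamma_g$ remains at least as deep in the LCS of the ambient $\Gamma_2$, so that the nilpotent-quotient machinery of Theorem \ref{lowerng} (available for $\Gamma_2$ via its embedding into the Picard modular group from Observation \ref{picardobs}) applies with the correct depth parameter, and restricting the resulting normal subgroup to $\Gamma_g$ — with only the constant cost incurred by Observation \ref{distobs} on word lengths — transfers the poly-logarithmic index bound to arbitrary surface groups.
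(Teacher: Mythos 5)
Your induction correctly establishes the elementary monotonicity $\gamma_k(\Gamma_g) \subseteq \gamma_k(\Gamma_2)$ under the inclusion supplied by Observation \ref{surfacefinite}, and this is indeed the content that is ``a more-or-less immediate corollary of the definition.'' So as a proof of the monotonicity statement, what you wrote is correct and complete.

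However, you should notice that what you proved and what the observation literally asserts point in \emph{opposite} directions. Monotonicity tells you $g \in \gamma_k(\Gamma_2)$, i.e.\ that $g$ lies at level \emph{at least} $k$ in $\Gamma_2$ — the element can only go \emph{deeper}. The observation as written says ``at most the $k$-th level,'' and the way it is used in the proof of Theorem \ref{lowerngsurf} it must mean that the depth does not \emph{increase} when passing from $\Gamma_g$ to $\Gamma_2$: one wants to hit $g$ with the $k$-step nilpotent quotient of $\Gamma_2$ (or of $F_2$ after the retraction of Theorem \ref{limitgp}), and if $g \in \gamma_{k+1}(\Gamma_2)$ that quotient is blind to $g$ and the exponent $k(k+1)/2$ in the index bound is no longer justified. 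Your closing remark that the machinery ``applies with the correct depth parameter'' is exactly the gap: monotonicity gives a \emph{lower} bound on the $\Gamma_2$-depth, whereas the argument needs an \emph{upper} bound, and these do not coincide. In fact the needed direction genuinely fails already at depth $1$: the inclusion-induced map $H_1(\Gamma_g;\mathbb{Q}) \to H_1(\Gamma_2;\mathbb{Q})$ is a surjection from rank $2g$ to rank $4$, so for $g>2$ its kernel is nonzero, and any $g \in \Gamma_g$ representing a primitive class in that kernel has depth $1$ in $\Gamma_g$ but depth $\geq 2$ in $\Gamma_2$. So the observation, read the way the application needs it, is not a corollary of the definition at all, and your proof — while a correct proof of the statement one \emph{can} prove immediately — does not close this gap and should not be presented as if it does.
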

\begin{observation}
\label{subgroupint}
\item Let $H$ be a subgroup of finite index $k$ in $\Gamma_2.$ Then $H \cap \Gamma_g$ is of index at most $k$ in $\Gamma_g.$ This is a standard exercise.
\end{observation}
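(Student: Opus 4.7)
The plan is to exhibit an injection from the coset space $\Gamma_g/(H \cap \Gamma_g)$ into $\Gamma_2/H$, which immediately forces $[\Gamma_g : H \cap \Gamma_g] \leq [\Gamma_2 : H] = k$. This is the standard ``second isomorphism theorem style'' argument, and no hyperbolic geometry is needed; it works purely at the level of groups.

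Concretely, define the map
\[
\varphi : \Gamma_g/(H \cap \Gamma_g) \longrightarrow \Gamma_2/H, \qquad x(H\cap \Gamma_g) \longmapsto xH.
\]
First I would check that $\varphi$ is well-defined: if $x_1(H\cap\Gamma_g) = x_2(H\cap\Gamma_g)$, then $x_2^{-1}x_1 \in H\cap \Gamma_g \subset H$, hence $x_1 H = x_2 H$. Next I would verify injectivity: if $x_1 H = x_2 H$ with $x_1,x_2 \in \Gamma_g$, then $x_2^{-1}x_1 \in H$ and also $x_2^{-1}x_1 \in \Gamma_g$, so $x_2^{-1}x_1 \in H\cap \Gamma_g$, giving $x_1(H\cap \Gamma_g) = x_2(H\cap \Gamma_g)$. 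These two checks are essentially one-line verifications.

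There is really no obstacle here; the only ``subtlety'' worth a remark is that $\varphi$ need not be surjective, which is exactly why the inequality in the observation is ``at most $k$'' rather than equality. Equality holds precisely when $\Gamma_g H = \Gamma_2$, i.e.\ when every coset of $H$ in $\Gamma_2$ meets $\Gamma_g$. Since the observation asks only for an upper bound, the injection above suffices and the proof is complete.
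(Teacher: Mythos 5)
Your proof is correct and is exactly the standard coset-injection argument the paper is alluding to when it calls this ``a standard exercise''; the paper gives no written proof, and yours supplies it cleanly.
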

To deal with $\Gamma_2,$ we use the following method, suggested by Henry Wilton: Write $\Gamma_2$ as $\Gamma_2 = \langle a, b, c, d ~\left| [a, b] = [c, d]\right.\rangle.$
There is a standard map of $r:\Gamma_2 \rightarrow F_2=\langle a, b\rangle,$ with $r(a)=r(c) = x,$ and $r(b)=r(d) = y.$ The retraction $r$ has the obvious property of not decreasing the lower central series depth, but it does have the unfortunate property of having a nontrivial kernel. However, this can be dealt with by using the following result, attributed by H. Wilton to G. Baumslag:
\begin{lemma}[\cite{wiltonex}[Lemma 4.13]]
\label{wiltonlem}
Let $\mathbb{F}$ be a free group, let $z \in \mathbb{F},$ $z\neq 1,$ and let 
\[
g = a_0 z^{i_1} a_1\dotsc,  z^{i_n}a_n,
\]
with $a_1, \dotsc, a_n \in \mathbb{F}.$ Assume further that $n\geq 1$ and $[a_k, z] \neq 1$ for $0< k<n.$ Then, if for every $1\leq k\leq n$ it is true that 
\[
|z^{i_k}|\geq |a_{k-1}| + |a_k| + |z|,
\]
then $g$ does not represent the trivial word in $\mathbb{F}.$
\end{lemma}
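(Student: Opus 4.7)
The proof should proceed by induction on $n$, using careful cancellation analysis in the free group. The overall strategy is to show that each block $z^{i_k}$ contributes a ``surviving core'' of at least $|z|$ letters to the reduced form of $g$, and that these cores cannot interact destructively with one another. The length hypothesis ensures each block is long enough to withstand cancellation from both adjacent $a_{k-1}$ and $a_k$ factors; the noncommutativity hypothesis $[a_k,z]\neq 1$ is what prevents the cancellation from propagating across an interior $a_k$ and fusing $z^{i_k}$ with $z^{i_{k+1}}$.

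First I would perform a cosmetic reduction: writing $z = u v u^{-1}$ with $v$ cyclically reduced, conjugate $g$ by $u^{-1}$ so that every $z^{i_k}$ becomes $v^{i_k}$, which is already in reduced form; the factors $a_k$ get replaced by $b_k = u^{-1} a_k u$, and $[b_k, v] \neq 1$ iff $[a_k, z] \neq 1$. Triviality of $g$ is unaffected, and the length inequality is preserved up to bookkeeping involving $|u|$. The base case $n=1$ is then immediate: the reduced form of $a_0 \, v^{i_1} a_1$ has length at least $|v^{i_1}| - |a_0| - |a_1| \geq |v| > 0$. For the inductive step, I would argue junction-by-junction: the cancellation between $a_{k-1}$ and $v^{i_k}$ on the left is bounded by $|a_{k-1}|$ and that between $v^{i_k}$ and $a_k$ on the right is bounded by $|a_k|$, so at least $|v^{i_k}| - |a_{k-1}| - |a_k| \geq |v|$ letters of $v^{i_k}$ remain \emph{unless} further cancellation ``jumps'' across some $a_k$.

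The main obstacle is precisely ruling out this jump. Suppose $b_k$ completely cancels between $v^{i_k}$ and $v^{i_{k+1}}$, so one can write $v^{i_k} = P_k Q_k$ and $v^{i_{k+1}} = Q_{k+1}' R_{k+1}$ with $b_k = Q_k^{-1} (Q_{k+1}')^{-1}$; further cancellation across the junction requires a common suffix/prefix between $P_k$ and $R_{k+1}$ of the appropriate kind. I would show that iterating such extra cancellation until it terminates forces $b_k$ to lie in the cyclic subgroup generated by the primitive root of $v$, which is the centralizer of $v$ in $\mathbb{F}$ — contradicting $[b_k, v] \neq 1$. This uses the classical fact that centralizers in a free group are cyclic and generated by a primitive root. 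Once the jump is ruled out, the disjoint cores of length at least $|v|$ inside each $v^{i_k}$ survive in the reduced word, giving $|g|_{\mathrm{red}} \geq n|v| > 0$ and hence $g \neq 1$.
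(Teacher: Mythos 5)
The paper does not actually prove this lemma; it is quoted verbatim as Lemma~4.13 of Wilton's notes \cite{wiltonex}, so there is no in-paper argument to compare against. Your sketch is therefore the only proof on the table, and while the high-level plan (cyclically reduce $z$, track a surviving core of length at least $|z|$ inside each block $z^{i_k}$, and rule out cancellation jumping across an interior $a_k$ via the noncommutativity hypothesis) is the standard and correct route to a ``big powers'' lemma, there are two genuine gaps.

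First, the cosmetic reduction does not preserve the hypothesis. With $z = u v u^{-1}$ in reduced form and $v$ cyclically reduced, one has $|z^{i_k}| = |v^{i_k}| + 2|u|$ and $|z| = |v| + 2|u|$, so the given inequality becomes $|v^{i_k}| \geq |a_{k-1}| + |a_k| + |v|$; but the conjugated coefficients $b_k = u^{-1}a_k u$ satisfy only $|b_k| \leq |a_k| + 2|u|$, so the inequality you actually need after conjugation, $|v^{i_k}| \geq |b_{k-1}| + |b_k| + |v|$, can fail by as much as $4|u|$. This is not bookkeeping that resolves itself; you must either carry $u$ through the argument explicitly or strengthen the hypothesis you are reducing to.

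Second, and more seriously, the claim that ``iterating the extra cancellation past a fully cancelled $b_k$ forces $b_k$ into the centralizer of $v$'' is false as stated. Take $v = aba^{-1}c$ in $F(a,b,c)$ (cyclically reduced) and $b_k = c^{-1}ab^{-2}a^{-1}$, with $i_k = i_{k+1} = 3$; then $[b_k,v] \neq 1$, yet in reducing $v^{3} b_k v^{3} = aba^{-1}caba^{-1}ccaba^{-1}caba^{-1}c$ every letter of $b_k$ cancels \emph{and} one further pair cancels across the resulting junction of the two $v$-blocks, even though the length hypothesis $|v^3| = 12 \geq |b_k| + |v| = 9$ holds. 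So a nonzero amount of ``jump'' cancellation is perfectly compatible with $[b_k,v]\neq 1$; what the hypothesis actually guarantees is that the jump is \emph{bounded} (so that at least $|v|$ letters of each $v^{i_k}$ survive), and proving that bound is the real content of the lemma. It requires a quantitative periodicity argument (if more than roughly $|v|$ extra letters cancelled, a long common factor of two shifted copies of $v^{\pm\infty}$ would force $b_k$ to be a power of the primitive root of $v$), not merely the observation that ``any'' further cancellation implies commutation. As written, the crucial step is both imprecise and, under its most natural reading, incorrect.
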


To use Lemma \ref{wiltonlem} we introduce the \emph{Dehn Twist automorphism} $\phi$ of $\Gamma_2,$ which is defined by $\phi(a) = a, \phi(b) = b, \phi(c) = c^{[a, b]}, \phi(d) = c^{[a, b]},$ and recall the following easy fact:
\begin{lemma}
\label{centralizer}
Consider an element $x$ in the fundamental group $G$ of a compact surface $S.$ Then $[x, y] = 1$ if and only if there exists an element $w$ and integers $k, l$ such that $x=w^k,$ $y=w^l.$
\end{lemma}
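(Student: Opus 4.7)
The plan is to reduce this to the classical fact that centralizers of nontrivial elements in a torsion-free cocompact Fuchsian group are infinite cyclic. The ``if'' direction is immediate: two powers of a common element $w$ commute.

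For the ``only if'' direction, assume $x, y$ are both nontrivial with $[x,y]=1.$ Pick a hyperbolic structure on $S,$ which realizes $G$ as a discrete, torsion-free subgroup of $\PSL(2,\reals),$ the orientation-preserving isometry group of $\mathbb{H}^2.$ Every nontrivial element of $G$ is then a hyperbolic translation along a uniquely determined invariant geodesic (its axis). I would then invoke the classical fact that two nontrivial hyperbolic isometries of $\mathbb{H}^2$ commute if and only if they share the same axis --- one quick argument is that such an isometry is determined by its pair of fixed points on $\partial \mathbb{H}^2,$ and conjugation by a M\"obius transformation carries fixed-point sets to fixed-point sets, so commutation forces the two unordered fixed-point pairs to coincide. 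Consequently $x$ and $y$ share a common axis $\gamma.$

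Next I would examine $\stab_G(\gamma),$ the setwise stabilizer of $\gamma$ in $G.$ This is a discrete subgroup of the full stabilizer of $\gamma$ in $\PSL(2,\reals),$ which is a one-parameter group isomorphic to $\reals$ (the translations along $\gamma$). Torsion-freeness of $G$ rules out any order-two flip swapping the endpoints, so $\stab_G(\gamma)$ is in fact a discrete subgroup of $\reals,$ hence either trivial or infinite cyclic. Since it contains $x$ and $y,$ it is infinite cyclic, generated by a unique primitive $w$ corresponding to the translation of smallest positive length along $\gamma.$ The containments $x,y\in\langle w\rangle$ furnish the required exponents $k, l.$ The degenerate case in which one of $x,y$ is trivial is handled by taking $w$ to be the other element and the trivial side's exponent to be $0.$

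The main ``obstacle'' is really nothing more than correctly pinning down the fact that commuting hyperbolic isometries of $\mathbb{H}^2$ share an axis; once this is in hand, the lemma is an immediate consequence of properness of the $G$-action. The same argument applies verbatim to hyperbolic surfaces with cusps or geodesic boundary components, where $G$ is still a torsion-free discrete (but possibly non-cocompact) subgroup of $\PSL(2,\reals),$ and in that setting the only additional observation needed is that parabolic elements are ruled out inside $\stab_G(\gamma)$ because a parabolic fixes just one boundary point rather than preserving a geodesic setwise.
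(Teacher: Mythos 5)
Your proof is correct and follows essentially the same route as the paper: realize $G$ as a torsion-free cocompact Fuchsian group, observe that commuting hyperbolic isometries share an axis, and use discreteness to conclude that the stabilizer of that axis is infinite cyclic. The paper phrases the last step as ``translation lengths are commensurate, apply the Euclidean algorithm,'' which is just a concrete rendering of your observation that a discrete subgroup of $\reals$ is cyclic, so there is no substantive difference.
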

\begin{proof}
Represent $G$ as a Fuchsian group of isometries of $\mathbb{H}^2.$ Then, $x$ and $y$ are isometries of $\mathbb{H}^2.$ Since $S$ is a compact manifold, both $x$ and $y$ are hyperbolic elements, and since they commute, they have the same axis. Since the group $G$ is discrete, the translation distances of $x$ and $y$ are commensurate, and so $x=\gamma^m$ and $y=\gamma^n$ for some translation $\gamma$ (which is not necessarily in $G.$) However, by the obvious application of the Euclidean algorithm, $\beta=\gamma^{(m, n)} \in G,$ and $x=\beta^{m/(m, n)},$ and $y=\beta^{n/(m, n)},$ so $k=m/(m, n), l=n/(m, n)$ and $w=\beta$ as stated.
\end{proof}
\begin{remark} A completely different proof of Lemma \ref{centralizer}, based on a deep topological result of J.~H.~C.~Whitehead (\cite{whitehead}), is given by W. Jaco in \cite{jacofree}. The result of Whitehead is (in essence) that every infinite index subgroup of a closed surface group is a free group. The result of Lemma \ref{centralizer} then follows, by observing that $<x, y>$ is both free and abelian, hence cyclic.
\end{remark}

Now we are ready to prove the key observation
\begin{theorem}
\label{limitgp}
Let $g\in \Gamma_2,$ $g\neq 1.$ Then $r(\phi^{l(g)/4}(g) )\neq 1,$ where $l(g)$ is the minimal length of $g$ in terms of the generating set $\{a, b, c, d\}.$
\end{theorem}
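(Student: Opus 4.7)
The plan is to expand $\phi^n(g)$ explicitly, apply $r$ to land in $F_2$, and then certify non-triviality via Lemma \ref{wiltonlem}. Set $n = l(g)/4$ and fix a minimal-length representation of $g$ in the generators $\{a, b, c, d\}^{\pm 1}$. Group consecutive letters of the same ``type'' into an alternating decomposition
\[
g = A_0\, C_1\, A_1\, C_2\, A_2 \cdots C_k\, A_k,
\]
where each $A_i$ is a (possibly empty, only if $i=0$ or $i=k$) word in $\{a^{\pm 1}, b^{\pm 1}\}$ and each $C_j$ is a nonempty word in $\{c^{\pm 1}, d^{\pm 1}\}$. Since $\phi$ fixes $a, b$ and conjugates each of $c, d$ by $z := [a, b]$, iteration gives $\phi^n(v) = z^{-n} v z^n$ for $v \in \{c,d\}^{\pm 1}$, hence
\[
\phi^n(g) = A_0\, z^{-n} C_1 z^n\, A_1\, z^{-n} C_2 z^n\, A_2 \cdots z^{-n} C_k z^n\, A_k.
\]
Applying $r$ (so $a, c \mapsto x$, $b, d \mapsto y$, and therefore $z \mapsto w := [x, y]$), with $\bar{A}_i := r(A_i)$ and $\bar{C}_j := r(C_j)$, produces
\[
W := r(\phi^n(g)) = \bar{A}_0\, w^{-n} \bar{C}_1 w^n\, \bar{A}_1\, w^{-n} \bar{C}_2 w^n\, \bar{A}_2 \cdots w^{-n} \bar{C}_k w^n\, \bar{A}_k \in F_2.
\]

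Next I would dispose of the degenerate cases. If $k = 0$, then $g$ lies in $\langle a, b\rangle \leq \Gamma_2$, a free subgroup of rank $2$ (the surface relation $[a, b] = [c, d]$ imposes no relation among $\{a,b\}$ alone), so $r(g) = \bar{A}_0 \neq 1$ in $F_2$. If $g$ involves only $c, d$, then $k = 1$ and $\bar{A}_0 = \bar{A}_1 = 1$, making $W = w^{-n} \bar{C}_1 w^n$ a conjugate of $\bar{C}_1 \neq 1$, where non-triviality uses that $\langle c, d\rangle \leq \Gamma_2$ is free of rank $2$ (so $r$ restricts to an isomorphism there).

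In the main case ($k \geq 1$ with some $\bar{A}_i$ nonempty), I would invoke Lemma \ref{wiltonlem} with $\mathbb{F} = F_2$, its ``$z$'' taken to be $w$, and the $2k$ powers $w^{\pm n}$ separating the $2k+1$ factors $\bar{A}_0, \bar{C}_1, \bar{A}_1, \dotsc, \bar{C}_k, \bar{A}_k$. Two hypotheses must be checked: (i) each intermediate factor fails to commute with $w$, and (ii) $|w^{\pm n}| = 4n$ is at least the sum of the two neighboring factor lengths plus $|w| = 4$. For (i), the centralizer of $w = [x, y]$ in $F_2$ is $\langle w \rangle$ (since $[x, y]$ is not a proper power and centralizers in free groups are cyclic); if some intermediate $\bar{A}_i$ or $\bar{C}_j$ is a power $w^e$, absorb it into the flanking $w^{\pm n}$ by adjusting those exponents by $\pm e$, producing a shorter alternating product of the same form. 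For (ii), the total length of all $\bar{A}_i$ and $\bar{C}_j$ is at most $l(g) = 4n$, so any two consecutive such factors have combined length at most $4n$; with a harmless bump of $n$ by an additive constant (or $n = \lceil l(g)/4\rceil + 1$) the strict inequality required by the lemma is secured.

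The main obstacle I anticipate is the centralizer-absorption step of (i): when an intermediate block equals a power of $[x,y]$, the collapse changes the block count $k$ and may cascade, so one must argue that the reduction terminates either at a configuration to which Wilton's lemma cleanly applies, or at a single power $w^E$ with $E \neq 0$. The latter outcome can only occur when $g$ itself is a power of $[a, b] = [c, d]$ in $\Gamma_2$, and then a direct exponent calculation, combined with the hypothesis $g \neq 1$, forces $E \neq 0$. Carrying out this cascade carefully while pinning down the precise additive constant in $n$ is the delicate bookkeeping at the heart of the proof.
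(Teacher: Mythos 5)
Your plan tracks the paper's proof closely: alternating block decomposition of a minimal word, conjugation by $\phi^n$ (so that $c,d$-blocks pick up $z^{\mp n}$ wrappers), applying $r$ to land in $F_2$, and invoking Lemma \ref{wiltonlem} with $z=[x,y]$ and the big powers as separators. The commutator-power obstruction in hypothesis (i) and the length inequality in hypothesis (ii) are correctly identified as the crux. The one place you diverge from the paper --- and where your write-up is not yet airtight --- is in how and when the commutator-power obstruction is cleared.

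The paper clears it \emph{before} applying $\phi^m$ or $r$: Algorithm \ref{gen1} rewrites the minimal word $w(g)$ inside $\Gamma_2$ using the defining surface relation $[a,b]=[c,d]$. Whenever an $a,b$-block $L_i$ equals $[a,b]^{m_i}$ as a word, it is literally replaced by $[c,d]^{m_i}$, which then merges with the adjacent $c,d$-blocks; and symmetrically. Because $|[a,b]|=|[c,d]|=4$, the total length is preserved, and termination is immediate since each pass strictly reduces the block count. At the end, either $w_0(g)$ is a pure power $z_1^p$ (so $g=[a,b]^p$ and $p\neq 0$ since $g\neq 1$), or $w_0(g)$ is a single block in $\langle a,b\rangle$ or $\langle c,d\rangle$ (so $r(\phi^m g)$ is a conjugate of $r(w_0(g))\neq 1$, $r$ being injective on those free factors), or $w_0(g)$ has an honest alternating structure with no block a commutator power, and Lemma \ref{centralizer} plus Wilton's lemma finish. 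Your cascade in $F_2$ is exactly the $r$-image of this rewriting (since $r$ restricts to isomorphisms on $\langle a,b\rangle$ and $\langle c,d\rangle$, your condition ``$\bar A_i$ is a power of $w$'' is literally equivalent to the paper's ``$L_i$ is a power of $z_1$''), so it is correct in substance, but by doing it after $r$ you lose the transparent length/block-count bookkeeping that makes the paper's termination argument a one-liner.

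Two concrete gaps remain in your outline. First, your enumeration of cascade endpoints (``Wilton configuration or a single power $w^E$'') omits the case where the cascade collapses to a single block that is \emph{not} a power of $w$ --- e.g.\ $g=a[c,d]a^{-1}$ collapses to $W=x[x,y]x^{-1}$, which is neither. This case is actually trivial ($W\neq 1$ immediately), but it must be listed, because Wilton's lemma with short $z$-powers cannot be used to conclude. Second, your assertion that termination at $w^E$ forces $g$ to be a power of $[a,b]$ is true but not self-evident from the $F_2$ side; it is exactly the correspondence with the paper's $\Gamma_2$-rewriting (which never changes the element $g$) that makes it obvious, and you should say so rather than treat it as a ``direct exponent calculation.'' Finally, a shared wart: as you compute, Wilton's lemma needs $4n\geq |a_{k-1}|+|a_k|+4$ with $\sum |a_i|\leq l(g)$, so the exponent must really be $\geq l(g)/4+1$; the paper's proof states $4(m-1)\geq |w_0(g)|$, so the theorem's stated exponent $l(g)/4$ is off by one in both treatments.
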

\begin{proof} Let $w(g)$ be a shortest word in $a, b, c, d$ representing $g.$
First, write $w(g) = L_1 R_1 \dots L_k R_k,$ where $L_j$ are blocks of $a$s and $b$s and $R_i$ are blocks of $b$s and $c$s. Further, let $z_1 = [a, b]$ and $z_2 = [c, d]$ (these represent the same element $z$ of $\Gamma_2,$ but we think of them as words for now.) Now, apply the following rewriting process (see Algorithm \ref{gen1}) to $w(g)$: 

First, if any $L_i$ is a power of $z_1$, replace it by the same power of $z_2.$ Second, if any $R_j$ is a power of $z_2,$ replace it by the same power of $z_1.$ Then repeat the two steps until neither can be applied. Note that this process will terminate eventually, since each steps reduces the total number of blocks (in fact, it reduces \emph{both} the number of $L$ blocks and the number of $R$ blocks). Call the resulting word $w_0(g)$ ($|w_0(g)| = |w(g)|,$ since $w(g)$ was assumed minimal). By abuse of notation, let 
$w_0(g) = L_1 R_1\dots L_k R_k$ (where the $k$ might be different from the $k$ in $w(g)$). Note that applying the automorphism $\phi^m$ to $g$ replaces 
each occurrence of a block $R_j$ in $w_0$ by $z^{-m} R_j z^m,$ and hence applying
 $r\circ \phi^m$ to $g$ maps $g$ to 
the word $u_0(g)=L_1 z_1^{-m} r(R_1)^m L_2 z_1^{-m} r(R_2) z_1^m \dots L_k z_1^{-m} r(R_k) z_1^m.$ By construction of $w_0$ and Lemma \ref{centralizer}, the hypotheses of Lemma \ref{wiltonlem} hold, as long as $4(m-1)\geq |w_0(g)|.$
\end{proof}
\begin{algorithm}
\label{gen1}
\caption{Rewriting Algorithm}
\begin{algorithmic}[1]
\LOOP
\IF {$w(g) = z_1^p$}
\STATE Return $w(g).$
\ELSIF{Any $L_i$ is a power of $z_1,$ so that $L_i=z_1^{m_i}$}
\STATE  Replace $L_i$ by $z_2^{m_i}.$ 
\ELSIF {any $R_j$ is a power of $z_2,$ so that $R_j=z_2^{n_j}$}
\STATE Replace $R_j$ by $z_1^{n_j}.$
\ENDIF
\ENDLOOP
\end{algorithmic}
\end{algorithm}

\begin{lemma}
\label{wordlen}
The length of $r(\phi^{l(g)/4}(g)$ is bounded above by $l(g)^2+l(g).$
\end{lemma}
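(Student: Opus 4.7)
The plan is to trace through the effect of $r \circ \phi^m$ block-by-block on the normal form $w_0(g) = L_1 R_1 \dotsb L_k R_k$ produced by the rewriting procedure in Theorem \ref{limitgp}, and to sum the contributions to get an upper bound on the unreduced length in $F_2$.

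First I would observe that since $\phi$ fixes $a,b$ and satisfies $\phi(c) = z_1^{-1} c z_1$, $\phi(d) = z_1^{-1} d z_1$ with $z_1 = [a,b]$, iterating gives $\phi^m(c) = z_1^{-m} c z_1^m$ and $\phi^m(d) = z_1^{-m} d z_1^m$. The internal conjugations inside a block $R_j$ (which consists only of $c^{\pm 1}, d^{\pm 1}$) telescope, so $\phi^m(R_j) = z_1^{-m} R_j z_1^m$, while $\phi^m$ fixes each $L_i$. Hence
\[
\phi^m(w_0(g)) \;=\; L_1\, z_1^{-m} R_1 z_1^m \, L_2\, z_1^{-m} R_2 z_1^m \, \cdots \, L_k\, z_1^{-m} R_k z_1^m.
\]
Applying the retraction $r$, each occurrence of $z_1$ becomes $[x,y]$, a reduced word of length $4$ in $F_2$, and each $r(L_i)$, $r(R_j)$ has length equal to that of the original block. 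The unreduced length of $r(\phi^m(g))$ is therefore at most $\sum_i(|L_i|+|R_i|) + 2k \cdot 4m$.

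Plugging in $m = l(g)/4$ and using the fact that $w_0(g)$ is a reduced expression of length $l(g)$, one has $\sum_i(|L_i|+|R_i|) \le l(g)$ and, since each block is nonempty, $k \le l(g)/2$. Assembling,
\[
|r(\phi^{l(g)/4}(g))| \;\le\; l(g) \;+\; 2 \cdot \tfrac{l(g)}{2} \cdot 4 \cdot \tfrac{l(g)}{4} \;=\; l(g)^2 + l(g),
\]
as claimed. The only real obstacle is bookkeeping: one must check that the rewriting in Theorem \ref{limitgp} genuinely outputs an alternating sequence of nonempty $L$- and $R$-blocks whose total length is still $l(g)$, so that the inequality $k \le l(g)/2$ is legitimate. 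Any cancellation inside $r(\phi^m(g))$ (for instance between adjacent $r(L_i)$ and $r(R_i)$, which live over the same alphabet $\{x,y\}$) only improves the bound, and integer rounding of $l(g)/4$ is absorbed into the linear term. No deeper structural input is needed beyond the explicit form of $\phi$ and $r$.
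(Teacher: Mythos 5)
Your computation is correct and is exactly the calculation the paper is gesturing at with its one-word proof ``Computation.'' You trace $\phi^m$ block-by-block on $w_0(g) = L_1 R_1 \dotsb L_k R_k$, note that $r(z_1) = [x,y]$ has length $4$ so each of the $2k$ inserted conjugating factors $z_1^{\pm m}$ contributes $4m$ letters, and combine $\sum_i(|L_i|+|R_i|) \le l(g)$, $k \le l(g)/2$, and $m = l(g)/4$ to get $l(g) + 8km \le l(g) + l(g)^2$. The arithmetic is right, and your closing remarks correctly identify the only loose ends (cancellation in $F_2$ only helps; one or two blocks at the extremes of $w_0$ may be empty; $l(g)/4$ may need rounding, since the hypothesis in Theorem \ref{limitgp} actually requires $4(m-1)\ge |w_0(g)|$, i.e.\ $m\ge l(g)/4+1$) -- none of which changes the stated quadratic order, only lower-order constants. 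Since the paper supplies no proof beyond ``Computation,'' there is no alternative route to compare against; yours is the natural one.
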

\begin{proof}
Computation.
\end{proof}
As a corollary of Theorem \ref{limitgp} and Lemma \ref{wordlen}, we have the following extensions of Theorems \ref{lowerng} and \ref{expthm} respectively:
\begin{theorem}
\label{lowerngsurf}
For every element $w$ of length $n$ at the $k$-th level in the lower central series of  the fundamental group of a closed surface of  genus $g$ there is a normal subgroup $H(w)$ of index $O(\log^{k(k+1)/2}(n))$ which does not contain $w$. 
\end{theorem}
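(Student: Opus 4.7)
The plan is to obtain this as a direct corollary of the free-group result (Theorem \ref{lowerng}), by pushing $w$ from $\Gamma_g$ down to $F_2$ through $\Gamma_2$, along the surjection $\rho := r \circ \phi^{\lceil l(w)/4 \rceil}$ supplied by Theorem \ref{limitgp}.

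First I would reduce to $g=2$. Observation \ref{surfacefinite} embeds $\Gamma_g$ as a finite-index subgroup of $\Gamma_2$; by Observation \ref{distobs} the word length of $w$ in $\Gamma_2$ remains $O(n)$, and Observation \ref{surfacelcs} keeps the LCS depth of $w$ at most $k$. If one then produces a normal subgroup $K \triangleleft \Gamma_2$ of index $O(\log^{k(k+1)/2}(n))$ missing $w$, the intersection $K \cap \Gamma_g$ is normal in $\Gamma_g$, of index no larger (Observation \ref{subgroupint}), and still misses $w$.

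Next, set $\rho := r \circ \phi^{\lceil l(w)/4 \rceil} : \Gamma_2 \to F_2$. Theorem \ref{limitgp} gives $\rho(w) \neq 1$ in $F_2$, and Lemma \ref{wordlen} bounds the $F_2$-word length of $\rho(w)$ by $O(n^2)$. Since $\rho$ is a homomorphism, $\rho(w) \in \gamma_k(F_2)$. Theorem \ref{lowerng} then supplies a normal subgroup $N \triangleleft F_2$ of index $O(\log^{k(k+1)/2}(n^2)) = O(\log^{k(k+1)/2}(n))$ (the logarithmic gain from squaring is absorbed into the implied constant) with $\rho(w) \notin N$. Since $\rho$ is surjective ($r$ is a retraction and $\phi$ is an automorphism), the preimage $K := \rho^{-1}(N)$ is a normal subgroup of $\Gamma_2$ of the same index, not containing $w$, and the conclusion follows from the reduction above.

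The main obstacle is depth control in passing to $F_2$. The retraction $r$ can strictly raise the LCS depth of an element (for instance, $ac^{-1} \in \gamma_1(\Gamma_2)$ but $r(\phi^m(ac^{-1})) \in \gamma_2(F_2)$), so $\rho(w)$ may sit at a depth $k' > k$ in $F_2$, and applying Theorem \ref{lowerng} at level $k'$ then yields only the weaker bound $O(\log^{k'(k'+1)/2}(n))$. To recover the claimed exponent $k(k{+}1)/2$ one needs either that the $(k{+}1)\times(k{+}1)$ unipotent representation witnessing Theorem \ref{lowerng} at level $k$ detects every nontrivial element of $\gamma_k(F_2)$ (not merely those of depth exactly $k$), or that $r$ and $\phi$ be chosen $w$-dependently so as to preserve depth. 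The former seems the more natural route and is where the main technical work lies.
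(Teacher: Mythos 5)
Your route is exactly the paper's: reduce to $\Gamma_2$ via Observations \ref{surfacefinite}--\ref{subgroupint}, push into $F_2$ via $\rho = r\circ\phi^m$ using Theorem \ref{limitgp} for nontriviality and Lemma \ref{wordlen} for the length bound, then invoke Theorem \ref{lowerng}. The paper offers no argument beyond the one-sentence assertion that the theorem is a corollary of Theorem \ref{limitgp} and Lemma \ref{wordlen}, so the depth-control obstacle you raise in your final paragraph is never addressed there; it is a genuine gap in the paper's proof as much as the sticking point in yours.

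Your diagnosis is correct, and the phenomenon is even more severe than your example suggests. Take $w = ac^{-1}$, at depth $1$ in $\Gamma_2$. Then $\phi^m(w) = a\,[a,b]^{-m}\,c^{-1}\,[a,b]^{m}$, and writing $z=[x,y]$ one gets $\rho(w) = x\,z^{-m}\,x^{-1}\,z^{m}$, a commutator of $x$ with an element of $\gamma_2(F_2)$; thus $\rho(w)\in\gamma_3(F_2)$, at depth exactly $3$ for $m\geq 1$. So $\rho$ sends a depth-$1$ element to a depth-$3$ element, and nothing in Theorem \ref{limitgp} or Lemma \ref{wordlen} bounds the depth of $\rho(w)$ in terms of the depth of $w$. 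Applying Theorem \ref{lowerng} to $\rho(w)$ then yields only $O(\log^{k'(k'+1)/2}(n))$ with $k'$ the depth of $\rho(w)$ in $F_2$, not the claimed exponent $k(k+1)/2$.

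Of your two proposed repairs, the first cannot work as stated: any homomorphism of $F_2$ onto a $k$-step nilpotent group kills $\gamma_{k+1}(F_2)$, which is nontrivial, so no unipotent representation ``at level $k$'' can detect every nontrivial element of $\gamma_k(F_2)$ --- it necessarily misses exactly the depth-increased images. The second repair (choose $r$ and the twisting automorphism $w$-dependently so as to preserve depth) is not attempted in the paper, and it is not clear that it is available: retractions $\Gamma_2\to F_2$ are constrained by compatibility with the relation $[a,b]=[c,d]$, and already for $w=ac^{-1}$ no obvious choice avoids the depth jump. Until one of these is supplied, or a different reduction found, Theorem \ref{lowerngsurf} as stated rests on an unfilled gap.
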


\begin{theorem}
\label{expthmsurf}
Consider the fundamental group $\Gamma_k$ of a surface of genus$g$, and let $n  > 1.$  There exists a normal subgroup $H_n$ of $\Gamma_k$ of index $f(n)$ which contains \emph{no} non-trivial elements of word length smaller than $n,$  where the index $f(n)$ can be bounded by
\[
f(n) =O(g^{O(n^2)}).
\]
\end{theorem}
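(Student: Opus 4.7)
The plan is to combine the free-group bound (Theorem \ref{expthm}) with the retraction-plus-Dehn-twist machinery of Subsection \ref{surfsec} to transfer the estimate from $F_2$ to $\Gamma_g$. By Observations \ref{surfacefinite}, \ref{distobs} and \ref{subgroupint}, it suffices to produce a normal subgroup of $\Gamma_2$ avoiding all nontrivial elements of $\Gamma_2$-length at most $Cn$, where $C=C(g)$ is the distortion constant from Observation \ref{distobs}; intersecting with the (normal, cyclic) embedding of $\Gamma_g$ inside $\Gamma_2$ preserves normality and only multiplies the index by $[\Gamma_2:\Gamma_g]$.

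To handle $\Gamma_2$, fix $m=\lceil Cn/4\rceil+1$ and consider the homomorphism
\[
\psi := r\circ\phi^{m} : \Gamma_2 \longrightarrow F_2,
\]
with $r$ and $\phi$ as in the proof of Theorem \ref{limitgp}. Theorem \ref{limitgp} guarantees $\psi(g)\neq 1$ for every nontrivial $g\in\Gamma_2$ with $l(g)\le Cn$, while Lemma \ref{wordlen} bounds the $F_2$-length of such $\psi(g)$ by $l(g)^2+l(g)=O(n^2)$. Apply Theorem \ref{expthm} inside $F_2$ at word length $N=O(n^2)$: there is a normal subgroup $K\triangleleft F_2$ of index $O(3^{3N/4})=\exp(O(n^2))$ containing no nontrivial element of $F_2$-length less than $N$. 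Set $H=\psi^{-1}(K)\triangleleft\Gamma_2$. Then $[\Gamma_2:H]\le[F_2:K]=\exp(O(n^2))$, and by construction $H$ contains no nontrivial element of $\Gamma_2$-length at most $Cn$. The desired subgroup of $\Gamma_g$ is $H\cap\Gamma_g$; it is normal in $\Gamma_g$ (intersection of a normal subgroup of $\Gamma_2$ with the normal subgroup $\Gamma_g$), has index $\exp(O(n^2))$, and contains no nontrivial element of $\Gamma_g$-length less than $n$.

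The main obstacle, and the only place where work is required beyond assembling the pieces, is tracking the dependence on the genus $g$ through the exponent, so that the final bound takes the claimed form $O(g^{O(n^2)})$ rather than a larger expression. This requires using the sharper $O(\log g)$ distortion constant for word length between $\Gamma_g$ and $\Gamma_2$ mentioned in Observation \ref{distobs}, rather than the cruder $g-1$ bound — with the weaker constant one merely obtains a bound of the shape $\exp(O(g^{2}n^{2}))$, which still captures the qualitative content (exponential in $n^2$, polynomial-in-$n^2$ exponent in $g$) but is not of the advertised shape. All other steps — normality of $H\cap\Gamma_g$, the length estimate of Lemma \ref{wordlen}, the non-vanishing of $\psi(g)$ via Theorem \ref{limitgp} — are already in place.
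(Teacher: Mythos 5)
Your assembly is exactly the argument the paper has in mind: the text simply declares Theorem \ref{expthmsurf} to be a corollary of Theorem \ref{limitgp} and Lemma \ref{wordlen} without writing it out, and your chain -- fix $m\geq Cn/4+1$ so that $\psi=r\circ\phi^{m}$ is injective on the $\Gamma_2$-ball of radius $Cn$ (Theorem \ref{limitgp}), bound $|\psi(g)|\leq (Cn)^2+Cn$ (Lemma \ref{wordlen}), apply Theorem \ref{expthm} at that length scale, pull back to $\Gamma_2$, and intersect with $\Gamma_g$ using Observations \ref{surfacefinite}, \ref{distobs}, \ref{subgroupint} -- is the correct way to fill in that gap.

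One correction, though, to your closing paragraph: the $O(\log g)$ distortion constant does not quite deliver the advertised shape. With $C=O(\log g)$, Lemma \ref{wordlen} gives the free-group length $N=O(C^{2}n^{2})=O(n^{2}\log^{2}g)$, so the index from Theorem \ref{expthm} is $3^{O(N)}=\exp(O(n^{2}\log^{2}g))=g^{O(n^{2}\log g)}$, which carries an extra $\log g$ in the exponent beyond $g^{O(n^{2})}$. That defect lies in the theorem statement (which the paper itself concedes is ``not close to sharp''), not in your proof, but you should not claim the sharper distortion removes it. Two smaller inaccuracies: $H\cap\Gamma_g$ is normal in $\Gamma_g$ simply because $H\triangleleft\Gamma_2$; normality of $\Gamma_g$ inside $\Gamma_2$ plays no role. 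And by Observation \ref{subgroupint} the index $[\Gamma_g:H\cap\Gamma_g]$ is bounded \emph{above} by $[\Gamma_2:H]$ -- it is not multiplied by $[\Gamma_2:\Gamma_g]$, so no further factor enters.
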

I do not expect that the bound in the statement of Theorem \ref{expthmsurf} is close to sharp (but it \emph{is} a bound).

\section{Regular coverings}
\label{regcoversec}
D. Futer asked whether the results of the previous section had analogues when the covering given by $\Pi$ was additionally required to be \emph{regular}. This seems to be a hard question in general.In the case where $S$ is a planar surface with $k$ boundary components, we have the following result:
\begin{theorem}
\label{regplanar}
In order for a covering of the boundary of $S$ to extend to a \emph{regular} covering of $S,$ it is necessary and sufficient that, in addition to the requirements of Theorem \ref{sullthm} (part 1), there must be a subgroup $G < S_n,$ with $|G| = n$ and  $G$ is generated by $\gamma_1, \dots, \gamma_k,$ where $\gamma_i \in \Gamma_i,$ for $i=1, \dots, k.$
\end{theorem}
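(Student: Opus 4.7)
The plan is to translate the question of extending to a regular cover into a question about surjective homomorphisms from $\pi_1(S)$ to a group of order $n$. Since $S$ is planar with $k$ boundary components, $\pi_1(S)$ is free of rank $k-1$, generated by the boundary loops $\gamma_1,\ldots,\gamma_k$ subject only to $\gamma_1\gamma_2\cdots\gamma_k=e$. A regular $n$-fold cover of $S$ is exactly the data of a surjection $\phi:\pi_1(S)\to G$ with $|G|=n$; its associated permutation representation $\Sigma:\pi_1(S)\to S_n$ is obtained by composing $\phi$ with the left regular embedding $G\hookrightarrow S_n$ (after identifying a fixed fiber with $G$).

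For necessity, assume a regular extension $\Pi:\widetilde S\to S$ with deck group $G$ exists. Then $|G|=n$, and the images $\sigma_i=\phi(\gamma_i)$ generate $G$ by surjectivity and satisfy $\sigma_1\cdots\sigma_k=e$. Under the left regular embedding $G\hookrightarrow S_n$ these $\sigma_i$ lie in the prescribed conjugacy classes $\Gamma_i$, because $\Sigma(\gamma_i)\in\Gamma_i$ by hypothesis. This exhibits the $G<S_n$ of order $n$ demanded by the theorem, and the Theorem \ref{sullthm} (1) conditions hold automatically.

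For sufficiency, suppose elements $\gamma_i\in\Gamma_i$ with $\gamma_1\cdots\gamma_k=e$ generate a subgroup $G<S_n$ of order $n$. Define $\phi:\pi_1(S)\to G$ by sending the free generators $\gamma_1,\ldots,\gamma_{k-1}$ of $\pi_1(S)$ to the like-named elements of $G$; the relation $\prod\gamma_i=e$ forces $\phi(\gamma_k)=\gamma_k$. By hypothesis $\phi$ is surjective onto a group of order $n$, so the corresponding cover $\widetilde S\to S$ is regular of degree $n$. It remains to check that this cover restricts to the given one on the boundary, and for this one must see that the inclusion $G<S_n$ coincides (up to conjugacy in $S_n$) with the left regular representation of $G$ on itself. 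This is the content of the orbit-stabilizer argument: the cover is connected, so $G$ acts transitively on $\{1,\ldots,n\}$, and a transitive subgroup of $S_n$ of order $n$ has trivial point-stabilizer, hence acts as the regular representation. Therefore $\Sigma(\gamma_i)=\gamma_i\in\Gamma_i$, matching the prescribed boundary cover.

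The main obstacle is the identification step in sufficiency: reconciling the \emph{abstract} group $G$ arising as the quotient $\phi(\pi_1(S))$ with the \emph{concrete} embedding $G<S_n$ given in the hypotheses, i.e.\ showing that the latter embedding is forced to be the left regular one. This hinges on transitivity of $G$ on $\{1,\ldots,n\}$, which is equivalent to connectedness of the putative cover and can be guaranteed either by adding it as an implicit hypothesis (as in Theorem \ref{sullthm}) or by observing that, since the prescribed boundary cover is typically assumed connected, the conjugacy-class data $\Gamma_i$ already forces $\langle\gamma_1,\ldots,\gamma_k\rangle$ to be transitive. With that point settled, the theorem reduces cleanly to the planar case of Theorem \ref{sullthm}.
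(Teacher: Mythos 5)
The paper itself gives no proof of Theorem \ref{regplanar}; the statement is followed only by a remark that there is no efficient way to check the condition. So there is nothing to compare against, and I can only assess your argument on its own terms.

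Your proof is essentially correct, and it cleanly carries out the right translation: regular $n$-fold covers of $S$ correspond to surjections $\phi:\pi_1(S)\to G$ with $|G|=n$; the monodromy is $\rho\circ\phi$ for $\rho$ the left regular embedding; the boundary condition becomes $\rho(\phi(\gamma_i))\in\Gamma_i$. Necessity goes through directly. For sufficiency, you correctly locate the crux: with $|G|=n$ alone, the given embedding $G<S_n$ need not be conjugate to the left regular representation, because an order-$n$ subgroup of $S_n$ can fail to be transitive (e.g.\ $n=4$, $G=\langle(12),(34)\rangle$), and then $\rho(\gamma_i)$ need not lie in $\Gamma_i$. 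One of your two suggested repairs is wrong, though: you assert that ``the conjugacy-class data $\Gamma_i$ already forces $\langle\gamma_1,\ldots,\gamma_k\rangle$ to be transitive,'' but that is false in general --- the $\Gamma_i$ only constrain cycle types, and a non-transitive subgroup can realize them. The correct repair is your other one: take connectedness of the extending cover as part of the definition of ``regular cover'' (standard for Galois covers); then the monodromy image is transitive by hypothesis, orbit--stabilizer gives a free action, both embeddings of $G$ into $S_n$ are regular and hence $S_n$-conjugate, and the boundary classes match. Equivalently, the theorem's conclusion should read ``transitive subgroup of order $n$.'' With that clarification your argument is complete.
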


As far as the author knows, there is no particularly efficient way of deciding whether the condition of Theorem \ref{regplanar} is satisfied.

Here is a more satisfactory (in not very positive) result:
\begin{theorem}
\label{regq}
Let $S$ be a surface with one boundary component. There does not exist a nontrivial covering of finite index  $\Pi: \widetilde{S}\rightarrow S$ where $\widetilde{S}$ also has one boundary component.
\end{theorem}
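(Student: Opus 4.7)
The plan is to translate the topological hypothesis into a group-theoretic one using the monodromy of the regular covering, and then exploit the fact that $\partial S$ is null-homologous. (I am reading the theorem in the context of Section \ref{regcoversec}, so ``covering'' means ``regular covering''; for general covers the statement fails, e.g.\ there is a connected degree-$3$ cover of the once-holed torus in which $[a,b]$ acts as a $3$-cycle on the fiber, giving a single boundary circle upstairs.)

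Suppose for contradiction that a nontrivial connected regular covering $\Pi \colon \widetilde S \to S$ of degree $n > 1$ exists with $\widetilde S$ having one boundary component, and let $\rho \colon \pi_1(S) \to G$ be the associated monodromy onto the deck group $G$, so $|G| = n$. The first step I would take is the standard observation that, for a regular cover, the preimage of a loop $\gamma \subset S$ has exactly $|G|/\mathrm{ord}(\rho(\gamma))$ connected components, each covering $\gamma$ with degree $\mathrm{ord}(\rho(\gamma))$; the deck group acts transitively on these components, and the stabilizer of one of them is the cyclic subgroup $\langle \rho(\gamma) \rangle$. Applied with $\gamma = \partial S$, the hypothesis that $\widetilde S$ has one boundary component forces $\mathrm{ord}(\rho(\partial S)) = |G|$, hence $\rho(\partial S)$ generates $G$ and in particular $G$ is cyclic.

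The second step is to note that $\partial S$ is null-homologous in $S$. Writing $\pi_1(S)$ in standard generators $a_1,b_1,\dotsc,a_g,b_g$ (with $g \geq 1$, since $g=0$ makes $S$ a disk, which has no nontrivial covers), the boundary loop is represented by $[a_1,b_1]\cdots [a_g,b_g]$, hence lies in $[\pi_1(S),\pi_1(S)]$. Because $G$ is cyclic, and so abelian, $\rho$ factors through the abelianization, forcing $\rho(\partial S) = e$. This directly contradicts the previous step, completing the proof.

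I do not anticipate a serious technical obstacle; the argument is essentially a two-line observation once set up. The only point that deserves care is the component-count formula for preimages of loops under a regular cover, since this is precisely where regularity is used: for a general cover the components of $\Pi^{-1}(\gamma)$ correspond to orbits of $\rho(\gamma)$ on the fiber in $S_n$, and an element of the commutator subgroup of $S_n$ can still act as a single $n$-cycle (whenever $n$ is odd), which is why the regularity hypothesis cannot be dropped.
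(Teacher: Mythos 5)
Your argument is the same as the paper's: one boundary component upstairs forces the monodromy image of $\partial S$ to generate the deck group, which (having order $n$, by regularity) is therefore cyclic and hence abelian, while $\partial S \in [\pi_1(S),\pi_1(S)]$ forces that image to be trivial --- a contradiction. Your observation that the theorem as stated must implicitly assume regularity, together with the explicit degree-$3$ irregular cover of the once-holed torus as a counterexample, is a genuine improvement on the paper's statement, which omits the word ``regular'' even though its proof invokes it in the line ``Since the deck group has order $n$ (by regularity).''
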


\begin{proof}
Let the degree of the covering be $n.$ If $\widetilde{S}$ has one boundary component, the generator of the $\pi_1(\partial S)$ gives rise to the cyclic group $\mathbb{Z}/n\mathbb{Z},$ which is a subgroup of the deck group of $\Pi.$. Since the deck group has order $n$ (by regularity), the covering is cyclic (so that the deck group is, in fact, $\mathbb{Z}/n \mathbb{Z}.$). A cyclic group is abelian, and since the generator of $\pi_1(\partial S)$ is a product of commutators, it is killed by the map $\Sigma.$ But this contradicts the statement of the first sentence of this proof (that this same element generates the entire deck group).
\end{proof}

The proof also shows the following:
\begin{theorem}
\label{regq2}
Let $\Pi: \widetilde{S}\rightarrow S,$ where $S$ has a single boundary component, be an \emph{abelian} regular covering of degree $n.$ Then $\widetilde{S}$ has $n$ boundary components.
\end{theorem}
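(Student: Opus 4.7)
The plan is to analyze the restriction of $\Pi$ to the boundary. Write $c = \partial S$, so $\Pi^{-1}(c) = \partial\widetilde{S}$ is a disjoint union of circles $\tilde c_1,\dots,\tilde c_m$, each covering $c$ as a finite cyclic cover of some degree $k_i$, with $\sum_i k_i = n$. Our goal is to show $m=n$, equivalently that every $k_i$ equals $1$.

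First, I would invoke regularity. Because $\Pi$ is regular, the deck group $G$ (of order $n$) acts transitively on the fiber above any point of $c$, and hence permutes $\{\tilde c_1,\dots,\tilde c_m\}$ transitively. It follows that all the degrees $k_i$ coincide with a single value $k$, and $mk = n$. Moreover, the stabilizer $G_{\tilde c_1}$ of a boundary component has order $k$ and acts on $\tilde c_1$ as the deck group of the covering $\tilde c_1\to c$; under the standard correspondence between coverings and subgroups, $k$ equals the order of $\Sigma(\gamma)$ in $G$, where $\gamma\in\pi_1(S)$ is the loop once around $c$ and $\Sigma:\pi_1(S)\to G$ is the homomorphism associated with $\Pi$.

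Next, I would use the hypothesis that $S$ has a single boundary component. Picking a standard presentation, the boundary loop satisfies $\gamma=\prod_{i=1}^g[a_i,b_i]$ in $\pi_1(S)$, so $\gamma$ lies in the commutator subgroup $[\pi_1(S),\pi_1(S)]$. Since $G$ is abelian by assumption, $\Sigma$ kills all commutators, hence $\Sigma(\gamma)=e$. Combining, $k=|\Sigma(\gamma)|=1$, so $m=n$, as desired.

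There is no real obstacle here; the content is essentially the same observation used in the proof of Theorem~\ref{regq}, recast to count components rather than derive a contradiction. The only mild subtlety is making precise the identification of the ramification index $k$ with the order of $\Sigma(\gamma)$, which follows from the standard dictionary between based coverings and subgroups of $\pi_1$ applied to the circle $c$.
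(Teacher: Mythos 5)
Your proof is correct and matches the paper's argument, which simply observes (as a corollary of the proof of Theorem~\ref{regq}) that since the boundary loop is a product of commutators, it dies under any homomorphism to an abelian deck group, so each preimage circle covers the boundary with degree one. You spell out the transitivity of the deck action and the identification of the local degree with the order of $\Sigma(\gamma)$ more explicitly, but the route is the same.
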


We can combine our results in the following omnibus theorem:
\begin{theorem}
\label{regq3}
Let $S$ be a surface, whose boundary has $k$ connected components. Let the conjugacy classes of the $k$ coverings be be $\Gamma_1, \dotsc, \Gamma_k.$  In order for a constant cardinality $n$ covering of $\partial S$ to extend to a regular covering of $S$ it is necessary and sufficient that there be elements $\gamma_1\in \Gamma_1, \dotsc, \gamma_k \in \Gamma_k$ such that $\gamma_1, \dotsc, \gamma_k$ generate an order $n$ subgroup of $S_n$ and $\gamma_1\dots \gamma_k = e.$
\end{theorem}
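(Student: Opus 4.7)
The plan is to derive this as a synthesis of Theorems \ref{sullthm} and \ref{regplanar}, reducing the (possibly non-planar) case to the planar one by trivializing the handle generators. The sufficiency direction will be a direct construction modeled on the proof of Theorem \ref{regplanar}; the necessity direction will require a small amount of group-theoretic work to massage the boundary representatives.

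For sufficiency, I would start from the standard presentation
\[
\pi_1(S) = \langle c_1, \ldots, c_k, a_1, b_1, \ldots, a_g, b_g \mid c_1 \cdots c_k \prod_{j=1}^g [a_j, b_j] \rangle,
\]
where the $c_i$ represent the boundary components. Given representatives $\gamma_i \in \Gamma_i$ satisfying the hypotheses, define $\phi : \pi_1(S) \to S_n$ by $\phi(c_i) = \gamma_i$ and $\phi(a_j) = \phi(b_j) = e$. The defining relator maps to $\gamma_1 \cdots \gamma_k$, which is $e$ by hypothesis, so $\phi$ is well-defined; its image is the order-$n$ subgroup $\langle \gamma_1, \ldots, \gamma_k\rangle$, so the associated cover is regular of degree $n$, with boundary monodromies in the prescribed classes $\Gamma_i$.

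For necessity, I would begin with a regular $n$-fold cover $\Pi : \widetilde{S} \to S$ of the required type, extract its deck group $G$ (of order $n$), and embed $G$ into $S_n$ via the left regular representation to obtain $\phi : \pi_1(S) \to S_n$ with image $G$. The boundary images $g_i = \phi(c_i)$ lie in $\Gamma_i$ and, by applying $\phi$ to the defining relator, satisfy
\[
g_1 \cdots g_k = \Bigl(\prod_{j=1}^g [\phi(a_j), \phi(b_j)]\Bigr)^{-1} \in [G,G].
\]
What remains is to produce representatives $\gamma_i \in \Gamma_i$ with $\gamma_1 \cdots \gamma_k = e$ (rather than merely in $[G,G]$) that still generate an order-$n$ subgroup.

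The main obstacle is this final adjustment. The strategy I would try is to modify $\phi$ by an inner automorphism of $S_n$ (a sheet relabeling) together with independent conjugations of the $g_i$ within their classes $\Gamma_i$, invoking Ore's theorem on commutators in symmetric groups — exactly as in the proof of Theorem \ref{sullthm} — to absorb the commutator product $\prod [\phi(a_j), \phi(b_j)]$ into a revised choice of boundary representatives while preserving the generated subgroup. Once the handles can be trivialized without changing the image or the conjugacy classes of the boundary monodromies, the problem reduces to the planar criterion of Theorem \ref{regplanar} applied to the planar surface obtained by capping off the genus, giving the desired conclusion.
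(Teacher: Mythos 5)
Your sufficiency argument is essentially right (though both you and the theorem statement elide the point that the order-$n$ image $\langle\gamma_1,\dots,\gamma_k\rangle$ must also act transitively on $\{1,\dots,n\}$ for the resulting cover to be connected; an order-$n$ subgroup of $S_n$ need not be transitive). The genuine gap is in the necessity direction. You correctly isolate the obstruction — the commutator product $\prod_j[\phi(a_j),\phi(b_j)]$ — but your proposed repair, conjugating the $g_i$ within their classes to absorb it, cannot succeed, because the necessity direction of Theorem~\ref{regq3} is in fact \emph{false} once $S$ has positive genus. Take $S$ of genus one with a single boundary component, so $\pi_1(S)=\langle a,b\rangle$ is free of rank two and the boundary loop is $c=[a,b]$. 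The surjection $\phi:\pi_1(S)\to\mathbb{Z}/2\mathbb{Z}\hookrightarrow S_2$ with $a\mapsto(1\,2)$, $b\mapsto e$ yields a connected regular double cover; since $\phi(c)=e$, the boundary circle lifts to two disjoint circles, so $\Gamma_1=\{e\}$. The only available representative is $\gamma_1=e$, and $\langle e\rangle$ has order one, not two. The cover exists but the condition fails, and since $\Gamma_1$ is a singleton there is nothing to conjugate.

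The structural reason the planar-style argument breaks down is that for nonplanar $S$ the deck group is generated by the images of all of $c_1,\dots,c_k,a_1,b_1,\dots,a_g,b_g$, whereas the $\phi(c_i)$ alone may generate a \emph{proper} subgroup of it. For planar $S$, $\pi_1(S)$ is generated by the boundary loops, so the two notions agree and the statement reduces to Theorem~\ref{regplanar}. The honest scope of Theorem~\ref{regq3} as an if-and-only-if is therefore the planar case; in general the condition is sufficient (your first half, with the transitivity caveat) but not necessary, and you should flag the theorem itself rather than attempt the repair.
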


%\section{Counting in $\SL(2, \mathbb{Z}).$}
%In his paper \cite{newmanmats}, Morris Newman showed that the number of integral matrices $\bigl(\begin{smallmatrix} a & b \\ c & d\end{smallmatrix}\bigr)$ with $a^2 + b^2 + c^2 + d^2 \leq x$ is asymptotic to $6 x.$ Newman's paper uses theta function identities, and nontrivial computation. Here we give a short proof of this result. It should be noted that much more general (though less explicit) results have been given in \cite{drs93}.
%
%To prove Newman's result, let's fix $a$ and $b,$ and a bound $r.$ The number of solutions to $ac - bd$ with $c^2+d^2 < r^2$ is of the order of $\frac{2r}{sqrt{a^2 + b^2}}$ if $a$ is relatively prime to $b,$ and $0$ otherwise, so the total number of integer matrices with determinant one and norm squared bounded by $x$ is asymptotic to:
%\begin{equation}
%\label{relprime}
%N(x) \sim \sum_{\substack{a^2+b^2 \leq x\\ (a, b) = 1}} 2\dfrac{ \sqrt{x - (a^2+b^2)}}{\sqrt{a^2+b^2}} \sim \frac{12}{\pi^2} \sum_{a^2+b^2 \leq x} \dfrac{ \sqrt{x - (a^2+b^2)}}{\sqrt{a^2+b^2}},
%\end{equation}
%where the second asymptotic equality is due to the fact that relatively prime pairs $(a, b)$ are equidistributed among all lattice points with density $6/\pi^2 = 1/\zeta(2);$ see \cite{rkss} for more sophisticated applications of this fact.
%
%We can now approximate the last sum by an integral, to write
%\[
%N(x) \sim \frac{12}{\pi^2} \int_{u^2+v^2 \leq x} \dfrac{ \sqrt{x - (u^2+v^2)}}{\sqrt{u^2+v^2}} du dv = 
%\frac{24}{\pi} \int_0^{\sqrt{x}}\sqrt{x - r^2} d r = 6 x.
%\]
%

\bibliographystyle{plain}
\bibliography{oneint,rivin}
\end{document}